\newcommand{\Q}{\mathbb{Q}}
\newcommand{\Z}{\mathbb{Z}}
\newcommand{\eps}{\varepsilon_{D}}
\newcommand{\epst}{\varepsilon_{2}}
\newcommand{\epsp}{\varepsilon_{2D}}
\newcommand{\kbar}{K^{\prime}}
\newcommand{\Qext}[1]{\mathbb{Q} ( #1 )}
\newcommand{\Qextbi}[1]{\mathbb{Q} \bl #1 \br}
\newcommand{\nclassgroup}[1]{Cl^{+}(#1)}
\newcommand{\bl}{\left(}
\newcommand{\br}{\right)}
\newcommand{\al}{\left|}
\newcommand{\ar}{\right|}
\newcommand{\pl}{\left[}
\newcommand{\pr}{\right]}
\newtheorem{theorem}{Theorem}
\numberwithin{theorem}{section}
\newtheorem*{theorem*}{Theorem}
\newtheorem{corollary}[theorem]{Corollary}
\newtheorem{proposition}[theorem]{Proposition}
\newtheorem{conj}[theorem]{Conjecture}
\newtheorem{lemma}[theorem]{Lemma}
\theoremstyle{plain}
\theoremstyle{definition}
\newtheorem{remarkref}[theorem]{Remark}
\begin{document}
\include{package}
\baselineskip=14.5pt

%----PAPER_TITLE
\title{ Iwasawa module of the cyclotomic $\Z_{2}$-extension of certain real quadratic fields}

\author{Josu\'e \'Avila}
\address{Centro de Investigaci\'on de Matem\'atica Pura y Aplicada (CIMPA), Universidad de Costa Rica.}
\email{josue.avila@ucr.ac.cr}
\renewcommand{\thefootnote}{}

%----FOOTNOTE

\maketitle

%------INTRODUCTION

\begin{abstract} For a real quadratic field $K=\Qext{\sqrt{D}}$, let $K_{\infty}$ denote the cyclotomic $\Z_{p}$-extension of $K$. Greenberg conjectured that the corresponding Iwasawa module $X_{\infty}$ is finite. Building on the work of Mouhib and Movahhedi, we provide new examples of real quadratic fields for which the conjecture holds, when $X_{\infty}$ is cyclic and the prime is $p=2$. Furthermore, we find a fundamental system of units for certain biquadratic fields of the form $\Qext{\sqrt{2}, \sqrt{D}}$ and show how to use it to calculate the order of $X_{\infty}$.
\end{abstract}

\section{Introduction}

\smallskip

Let $K=\Qext{\sqrt{D}}$ be a real quadratic field and $K_{\infty}$ denote the cyclotomic $\Z_2$-extension of $K$. Let $X_{\infty}$ be the Iwasawa module corresponding to this extension \cite{washington}. Greenberg conjectured \cite{greenberg} that (independent of the prime number) $X_{\infty}$ is finite whenever the base field is totally real. In (\cite{mouhib}, Theorem 3.8), Mouhib and Movahhedi classified all real quadratic fields such that the Iwasawa module $X_{\infty}$ is cyclic and non-trivial. Furthermore, they proved that the conjecture is satisfied for certain cases. In (\cite{laxmi}), Laxmi and Hariharan gave new examples of infinite families with $X_{\infty} \cong C_{2}$, the cyclic group of order $2$. Motivated by these two articles, we provide new examples of real quadratic fields where $X_{\infty}$ is cyclic and finite. Some of the techniques we will use can be found in (\cite{azizi2000}, \cite{azizi2001}, \cite{laxmi}, \cite{mouhib}). Moreover, one of our main references is the analysis of biquadratic forms done by Kaplan \cite{kaplan}, where the author uses Gauss' \cite{gaussoriginal} genus theory to determine the $4$-rank and $8$-rank of real quadratic fields depending on the conditions of the primes dividing the discriminant, as it will be explained later.\\

We will focus our attention on the following three cases where $X_{\infty}$ is cyclic (\cite{mouhib}, Theorem 3.8):
\begin{enumerate}\label{casesfirst}
    \item $D=pq$, with $p \equiv 5 \mod{8}$ and $q \not \equiv 1 \mod{8}$.
    \smallskip
    \item $D = p q_{1} q_{2}$ with $p \equiv 5 \mod{8} $, $q_{1} \equiv 3 \mod{8}$ and $q_{2} \equiv 3 \mod{4}$.
    \smallskip
    \item $D=pq$, with $p \equiv 5 \mod{8}$ and $q \equiv 1 \mod{8}$, with $\left( \frac{2}{q} \right)_{4} \neq (-1)^{\frac{p-1}{8}}$.
\end{enumerate}

In the first case, it was shown by Ozaki and Taya \cite{ozakitaya} that $X_{\infty}$ is finite, using Iwasawa theory, but the module was not determined explicitly. We prove that for  $q \equiv 3 \mod{4}$ the Iwasawa module $X_{\infty}$ is isomorphic to $A_{0}$ (\Cref{casep5q3}), whilst for $q \equiv 5 \mod{8}$ we prove that this holds under certain conditions (\Cref{caseqequal5}).\\

In the second case, when $q_{2} \equiv 7 \mod{8}$ the Iwasawa module $X_{\infty}$ is known to be finite \cite{mouhib}. Moreover, in \cite{laxmi} the authors proved that $X_{\infty} \cong A_{0} \cong \Z/2\Z$ under certain restrictions for both $q_{2} \equiv 7 \mod{8}$ and $q_{2} \equiv 3 \mod{4}$, using the properties of the biquadratic field $K_{1}$ and the units of the ring of integers.  Using our methods, we will provide a new family considering the field $K_{2}$ instead (\Cref{doublesymbol-1part2}).\\

Finally, in the third case, we use the classification in \cite{scholz} (see \Cref{scholz}) and the ideas in \cite{mouhib}, to provide new infinite families of real quadratic fields satisfying Greenberg's conjecture (\Cref{pqcase}). In addition to this, we give a Fundamental System of Units (F.S.U) of the field $K_{1}=\Q(\sqrt{2},\sqrt{D})$ for some of the cases (\Cref{corol1}, \Cref{corol2}, \Cref{lastcase1mod16}), using the analysis done by Kubota \cite{kubota}, and we draw some conclusions regarding the fundamental units of the quadratic subfields in this extension.\\

In the next section, we provide the main notations used throughout this article, and summarize some background results for the reader's convenience. The main results of this article will be discussed in sections $3, 4$, and $5$.

\section{Notations and preliminary results}

Denote by $K_n$ the $n^{\rm{th}}$ layer of the cyclotomic $\Z_2$-extension of $K=\Q(\sqrt{D})$ and $A_n$ its $2$-class group. These fields are of the form $K_n=\Q(a_{n},\sqrt{D})$, where $a_0 = 0$ and $a_n = \sqrt{ 2 + a_{n-1}}$. Since they will be of great importance, we emphasize that $K_0 = K$, $K_1 = \Q(\sqrt{2}, \sqrt{D})$ and $K_{2}=\Q \left( \sqrt{2+\sqrt{2}},\sqrt{D}\right)$. We denote by $n_{0}$ the first integer such that the extension $K_{\infty}/K_{n_{0}}$ is totally ramified \cite{washington}. Also, let $X_{\infty} = \lim\limits_{\substack{ \longleftarrow \\ }} A_{n}$ be the Iwasawa module corresponding to this extension.\\

Denote by $\Q_n$ the $n^{\rm{th}}$ layer of the cyclotomic $\Z_2$-extension of $\Q$. In particular $\Q_{1} = \Q(\sqrt{2})$ and its fundamental unit is $\epst=1+\sqrt{2}$ of norm equal to $-1$. Following \cite{mizusawa}, we shall also let $K_{n}^{\prime}$ be the subfield of $K_{n+1}$ containing $\Q_n$, different from $K_n$ and $\Q_{n+1}$, and $A_{n}^{\prime}$ its $2$-class group. Of particular importance, we have $K_{0}^{\prime}=\kbar = \Q( \sqrt{2D})$ and $K_{1}^{\prime}=\Q(\sqrt{(2+\sqrt{2})D}$. We denote the fundamental units of $K$ and $\kbar$ by $\eps$ and $\epsp$ respectively.\\

As it will be one of our main references, we adhere to the original notation from Gauss \cite{gaussoriginal} used by Kaplan. The quadratic form $Ax^2+2Bxy+Cy^2$ will be denoted by $[A, B, C]$, where the condition regarding the second coefficient is the main difference from the modern definition. As established in \cite{kaplan}, there is a map between the group $C(D)$ of equivalence classes of quadratic forms of discriminant $D = B^2-AC$, using this restriction, and the $2$-part of the narrow class group of $K=\Q(\sqrt{D})$, where the kernel is a group of order $1$ or $3$. Since we will mainly focus on the $2$-part of a group, there will be no difference in the overall theory.\\

For a finite abelian group $G$, we can define the $2^n$-rank of $G$ as 
\begin{align*}
    r_{2^n}(G) = \text{dim}_{\mathbf{F}_{2}}\bl G^{2^{n-1}}/G^{2^{n}} \br,
\end{align*}
where $\mathbf{F}_{2}$ represents the finite field of $2$ elements. A result due to Gauss \cite{gaussoriginal} is that the $2$-rank of the narrow class group of $K$, denoted by $r_{2}$ for simplicity, satisfies $r_{2}=r-1$,
 where $r$ is the number of divisors of the discriminant $d_K$ of $K$. The analysis for the $4$-rank and $8$-rank is more complicated. \\
 
 Two classes are in the same genus if their generic characters \cite{kaplan} have the same values. We define the principal genus as the genus of the principal form $[1, 0, -D]$. In particular, a class is in the principal genus if the value of every generic character equals $1$. Similar to the imaginary case \cite{cox}, let $s$ be the number of ambiguous simple forms in the principal genus, then 
 \begin{align*}
     s = 2^{\; r_4+1},
 \end{align*}
 where $r_4$ is the $4$-rank of the group $\nclassgroup{K}$. This will be useful in our analysis later, but we will refer to Kaplan's paper when needed since he determines the $4$-rank and $8$-rank of the fields of interest. \\

For future reference, we will write down some of the main results that will be used throughout the article.  Any result that is left out of this section will be included in its respective section. First, we have the following result due to Fukuda:

\begin{theorem} \cite{fukudamain} \label{fukuda stability}
Let $K_{\infty}/K$ be a $\Z_{p}$ extension such that $K_{\infty}/K$ is totally ramified. If $\al A_1 \ar = \al A_{0} \ar$, then $\al A_{n}\ar=\al A_{0}\ar$ for all $n \geq 0$.
\end{theorem}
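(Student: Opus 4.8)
The plan is to translate the hypothesis $\al A_1 \ar = \al A_0 \ar$ into a statement about the Iwasawa module $X_{\infty}$, regarded as a module over the completed group algebra $\Lambda = \Z_p[[T]]$, and then to finish with Nakayama's lemma. Throughout I write $\omega_n = (1+T)^{p^n}-1$ and $\nu_{n,0} = \omega_n/\omega_0$, so that $\omega_0 = T$ and $\omega_n = \nu_{n,0}\,T$; the elementary computation $\nu_{1,0} = ((1+T)^p - 1)/T = p + \binom{p}{2}T + \cdots + T^{p-1}$ shows that $\nu_{1,0}$ has constant term $p$, hence lies in the maximal ideal $\mathfrak{m} = (p,T)$ of the local ring $\Lambda$. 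This fact is the arithmetic heart of the argument.

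First I would record the two structural inputs that actually use the total ramification of $K_{\infty}/K$. The first is surjectivity of the norm maps $A_{n+1} \to A_n$: since $K_{n+1}/K_n$ is totally ramified while the maximal unramified abelian $p$-extension $L_n$ of $K_n$ is unramified, the two fields are linearly disjoint over $K_n$, i.e. $L_n \cap K_{n+1} = K_n$, so restriction gives a surjection $A_{n+1} = \mathrm{Gal}(L_{n+1}/K_{n+1}) \twoheadrightarrow \mathrm{Gal}(L_n K_{n+1}/K_{n+1}) \cong A_n$, which is the norm; in particular the orders $\al A_n \ar$ are non-decreasing in $n$. The second input is Iwasawa's isomorphism for a totally ramified $\Z_p$-extension, $A_n \cong X_{\infty}/\omega_n X_{\infty}$ (see \cite{washington}, Ch.~13), compatible with the norm maps. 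Thus $A_0 \cong X_{\infty}/T X_{\infty}$, and the kernel of $A_1 \twoheadrightarrow A_0$ is $\omega_0 X_{\infty}/\omega_1 X_{\infty} = T X_{\infty}/\nu_{1,0} T X_{\infty}$.

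With these in hand the conclusion is immediate. The equality $\al A_1 \ar = \al A_0 \ar$ forces the displayed kernel to vanish, that is $\nu_{1,0}\,M = M$ for the finitely generated $\Lambda$-module $M := T X_{\infty}$. Since $\nu_{1,0} \in \mathfrak{m}$ this yields $\mathfrak{m} M = M$, and Nakayama's lemma for finitely generated modules over the local ring $\Lambda$ gives $M = T X_{\infty} = 0$. Consequently $\omega_n X_{\infty} = \nu_{n,0}\, T X_{\infty} = 0$ for every $n$, so that $A_n \cong X_{\infty}/\omega_n X_{\infty} = X_{\infty} = A_0$, and in particular $\al A_n \ar = \al A_0 \ar$ for all $n \geq 0$.

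The step I expect to demand the most care is the Iwasawa isomorphism $A_n \cong X_{\infty}/\omega_n X_{\infty}$: this clean form holds when a single prime is totally ramified, and when several primes of $K$ ramify in $K_{\infty}$ it must be replaced by the corresponding structure theorem carrying extra generators coming from the inertia subgroups. One then has to check that these correction terms are again annihilated once $T X_{\infty} = 0$, so the final conclusion is unaffected; likewise, verifying that the isomorphisms are compatible with the norm maps, so that the kernel of $A_1 \to A_0$ really is $T X_{\infty}/\nu_{1,0} T X_{\infty}$, is the only genuinely delicate bookkeeping. Everything else reduces to the observation that $\nu_{1,0} \in \mathfrak{m}$ together with Nakayama's lemma.
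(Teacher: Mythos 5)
There is nothing in the paper to compare against: the paper states this result as a citation to Fukuda \cite{fukudamain} and gives no proof, so your attempt can only be measured against the standard argument (which is essentially Fukuda's). Your proof \emph{is} that standard argument, and in the case where exactly one prime of $K$ ramifies in $K_{\infty}$ it is complete and correct: norm maps $A_{n+1}\to A_n$ are surjective by total ramification, $A_n \cong X_{\infty}/\omega_n X_{\infty}$, the kernel of $A_1 \to A_0$ is $TX_{\infty}/\nu_{1,0}TX_{\infty}$, and since $\nu_{1,0}\in\mathfrak{m}=(p,T)$, Nakayama forces $TX_{\infty}=0$, whence $A_n \cong X_{\infty} \cong A_0$ for all $n$. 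This single-prime case in fact covers every application made in this paper, since each base field considered ($2$ inert or ramified in $K/\Q$, and $2$ ramified in $K^{\prime}/\Q$) has a unique $2$-adic prime.

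The genuine problem is your last paragraph, because the theorem as stated allows several totally ramified primes, and there your proposed repair is logically backwards. When $s\geq 2$ primes ramify, the structure theorem reads $A_n \cong X_{\infty}/\nu_{n,0}Y_0$ with $Y_0 = TX_{\infty} + \langle a_2,\dots,a_s\rangle_{\Lambda}$, the $a_i$ coming from the inertia subgroups; consequently the kernel of $A_1\to A_0$ is $Y_0/\nu_{1,0}Y_0$, \emph{not} $TX_{\infty}/\nu_{1,0}TX_{\infty}$. The hypothesis $|A_1|=|A_0|$ therefore gives you $\nu_{1,0}Y_0 = Y_0$ and nothing directly about $TX_{\infty}$, so you cannot ``first get $TX_{\infty}=0$ and then check that the correction terms are annihilated'': the premise of that first step is unavailable. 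The fix is immediate but runs in the opposite order from what you wrote: apply Nakayama to the finitely generated $\Lambda$-module $Y_0$ itself (using $\nu_{1,0}\in\mathfrak{m}$) to conclude $Y_0=0$, which simultaneously kills $TX_{\infty}$ and the $a_i$, and then $A_n \cong X_{\infty}/\nu_{n,0}Y_0 = X_{\infty} \cong A_0$ for all $n$. With that single substitution --- $Y_0$ in place of $TX_{\infty}$ as the module fed to Nakayama --- your argument is correct in the full generality of the statement. (A last remark: Fukuda's theorem in \cite{fukudamain} is stated from the level $n_0$ at which total ramification begins, and also for $p$-ranks; the paper's version is the special case $n_0=0$, which is what your proof addresses.)
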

For the cyclotomic extension of a totally real field, since any field $K_{n}$ in the chain has the same cyclotomic extension, this can be simply stated as $X_{\infty} \cong A_{n}$ as soon as $\al A_{n+1} \ar = \al A_{n} \ar$ for some $n \geq n_{0}$.\\

We also have the following result about fundamental units for real biquadratic fields:

\begin{theorem} \cite{kubota} \label{kubotakuroda formula}
Let $L/\mathbb{Q}$ be a totally real bi-quadratic extension, with unit group $E(L)$ and $2-$class group $A(L)$. Let $L_1, L_2$ and $L_3$ be the quadratic subfields of $L$. Let $\varepsilon_i$ be the fundamental unit of $L_i$, for $i=1,2$ and $3$. Let $Q(L) := [E(L) : \langle -1, \varepsilon_{1}, \varepsilon_{2}, \varepsilon_{3} \rangle]$ be the Hasse unit index of $L$. Then we have
\begin{align*}
|A(L)| = \dfrac{1}{4}\cdot Q(L)\cdot |A(L_1)|\cdot |A(L_2)|\cdot |A(L_3)|.
\end{align*}
Further, a system $\mathcal{F}$ of fundamental units of $L$ (F.S.U) is one of the following possibilities.
\begin{multicols}{2}
\begin{enumerate}
    \item $\{\varepsilon_1, \varepsilon_2, \varepsilon_3 \}$
    \item $\{\sqrt{\varepsilon_1}, \varepsilon_2, \varepsilon_3 \}$
    \item $\{ \sqrt{\varepsilon_1}, \sqrt{\varepsilon_2}, \varepsilon_3\}$
    \item $\{\sqrt{\varepsilon_1\varepsilon_2}, \varepsilon_2, \varepsilon_3 \}$
    \item $\{\sqrt{\varepsilon_1\varepsilon_2}, \varepsilon_2, \sqrt{\varepsilon_3} \}$
    \item $\{ \sqrt{\varepsilon_1\varepsilon_2}, \sqrt{\varepsilon_1\varepsilon_3}, \sqrt{\varepsilon_2\varepsilon_3}\}$
    \item  $\{ \sqrt{\varepsilon_1\varepsilon_2\varepsilon_3}, \varepsilon_2, \varepsilon_3 \}$
\end{enumerate}
\end{multicols}
Any unit $\varepsilon_{i}$ that appears under the square root is assumed to have norm equal to 1, except for the last case, where all the units can have the same norm, either all 1 or all -1.
\end{theorem}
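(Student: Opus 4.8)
The plan is to prove the statement in two essentially independent pieces: the class number formula and the classification of fundamental systems of units. For the formula I would argue analytically. Writing $G=\mathrm{Gal}(L/\Q)\cong(\Z/2\Z)^2$ and letting $\chi_1,\chi_2,\chi_3$ be the three nontrivial (quadratic) characters of $G$, with $\chi_i$ cutting out $L_i$, Artin factorization of $L$-functions gives $\zeta_L(s)=\zeta_{\Q}(s)L(s,\chi_1)L(s,\chi_2)L(s,\chi_3)$ and $\zeta_{L_i}(s)=\zeta_{\Q}(s)L(s,\chi_i)$, so that
\[
\zeta_L(s)\,\zeta_{\Q}(s)^2=\zeta_{L_1}(s)\,\zeta_{L_2}(s)\,\zeta_{L_3}(s).
\]
Comparing the leading Laurent coefficients at $s=1$ through the analytic class number formula, using the conductor--discriminant identity $|d_L|=|d_{L_1}||d_{L_2}||d_{L_3}|$ to cancel the discriminant factors and $w=2$ throughout, the archimedean and discriminant contributions match and one is left with $h_L R_L=h_1 h_2 h_3\, R_1 R_2 R_3$, where $R_i$ denotes the regulator of $L_i$.

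The remaining input is the regulator relation. Reducing the matrix of archimedean logarithms of $\varepsilon_1,\varepsilon_2,\varepsilon_3$ over the four real places of $L$ to a $3\times 3$ regulator, I would obtain $R(\varepsilon_1,\varepsilon_2,\varepsilon_3)=4\,R_1R_2R_3$ for the regulator of the subgroup $\langle-1,\varepsilon_1,\varepsilon_2,\varepsilon_3\rangle$; on the other hand, since this subgroup has index $Q(L)$ in $E(L)$, its covolume gives $R(\varepsilon_1,\varepsilon_2,\varepsilon_3)=Q(L)\,R_L$. Combining the two yields $R_L=4R_1R_2R_3/Q(L)$, and substituting into $h_L R_L=h_1h_2h_3 R_1R_2R_3$ gives $h_L=\tfrac14 Q(L)\,h_1h_2h_3$. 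To pass to the $2$-class groups I note that $Q(L)$ is a power of $2$ (shown next), so $\tfrac14 Q(L)$ is a $2$-power; hence the odd parts of $h_L$ and of $h_1h_2h_3$ agree and the $2$-parts give exactly $|A(L)|=\tfrac14 Q(L)\,|A(L_1)|\,|A(L_2)|\,|A(L_3)|$.

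For the classification of fundamental systems, set $\bar E=E(L)/\{\pm1\}\cong\Z^3$ and $V=\langle\bar\varepsilon_1,\bar\varepsilon_2,\bar\varepsilon_3\rangle$. The key structural fact is that $\bar E/V$ is an elementary abelian $2$-group of rank at most $3$, so that $Q(L)\in\{1,2,4,8\}$. Writing $G=\{1,\sigma_1,\sigma_2,\sigma_3\}$ with $\sigma_1\sigma_2=\sigma_3$ and $N_{L/\Q}(\eta)=\eta^{1+\sigma_1+\sigma_2+\sigma_3}=\pm1$ for $\eta\in E(L)$, one computes $\prod_i\eta^{1+\sigma_i}=\eta^{3+\sigma_1+\sigma_2+\sigma_3}=\pm\eta^2$; since $\eta^{1+\sigma_i}=N_{L/L_i}(\eta)\in E(L_i)=\langle-1,\varepsilon_i\rangle$, this forces $\eta^2\in\langle-1,\varepsilon_1,\varepsilon_2,\varepsilon_3\rangle$, so every class of $\bar E/V$ has order dividing $2$. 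Consequently a fundamental system can be taken from elements of the form $\sqrt{\prod_{i\in S}\varepsilon_i}$, and the seven possibilities arise by enumerating the $\mathbf{F}_2$-subspaces of $\bar E/V$ up to the symmetry permuting the $L_i$.

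The norm conditions are then forced by reality: since $L$ is totally real, $\sqrt{\prod_{i\in S}\varepsilon_i}\in L$ requires $\prod_{i\in S}\varepsilon_i$ to be totally positive, and a fundamental unit $\varepsilon_i$ is totally positive exactly when $N_{L_i/\Q}(\varepsilon_i)=+1$; this is why any $\varepsilon_i$ appearing under a square root must have norm $+1$, whereas the product of all three can be totally positive even when every $N(\varepsilon_i)=-1$, by a parity-of-signs count over the real places, which is the exceptional last case. I expect the two delicate points to be the explicit regulator computation producing precisely the constant $4$ --- where the power of $2$ must be tracked carefully --- and, in the classification, the bookkeeping of the totally-positive conditions together with the $\mathrm{GL}_3(\Z)$-reduction that collapses the a priori many bases into the seven listed normal forms.
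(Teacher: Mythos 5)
First, a point of reference: the paper does not prove this statement at all --- it is quoted verbatim as a known result of Kubota \cite{kubota}, so there is no internal proof to compare against; your proposal has to stand on its own. Its first half does. The Brauer relation $\zeta_L\zeta_{\Q}^2=\zeta_{L_1}\zeta_{L_2}\zeta_{L_3}$, the conductor--discriminant cancellation, the regulator computation (the $3\times 3$ determinant with rows $(\ell_1,\ell_1,-\ell_1)$, $(\ell_2,-\ell_2,\ell_2)$, $(\ell_3,-\ell_3,-\ell_3)$ does give the constant $4$), and the index relation $R(\langle -1,\varepsilon_1,\varepsilon_2,\varepsilon_3\rangle)=Q(L)R_L$ together yield $h_L=\tfrac14 Q(L)h_1h_2h_3$ correctly, and your lemma that $\eta^2=\pm\prod_i N_{L/L_i}(\eta)\in\langle -1,\varepsilon_1,\varepsilon_2,\varepsilon_3\rangle$ for every $\eta\in E(L)$ is exactly the right structural input: it shows $Q(L)$ divides $8$, which also legitimizes the passage from class numbers to $2$-class groups.

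The genuine gap is in the classification. Enumerating $\mathbf{F}_2$-subspaces of $\tfrac12\bar V/\bar V\cong\mathbf{F}_2^3$ up to the $S_3$-symmetry gives \emph{eight} orbits, not seven: the trivial space, three types of lines, three types of planes, and the full space. The full space is the case $Q(L)=8$, i.e.\ $\sqrt{\varepsilon_1},\sqrt{\varepsilon_2},\sqrt{\varepsilon_3}$ all in $L$, and nothing in your argument excludes it --- in particular total positivity cannot, since all three fundamental units may well have norm $+1$ (e.g.\ $\Q(\sqrt3,\sqrt7)$). Excluding it requires an arithmetic input of Kummer-theoretic type, roughly as follows. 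Write $L_i=\Q(\sqrt{m_i})$ with $m_i$ squarefree. If $\sqrt{\varepsilon_1}\in L=L_1(\sqrt{m_2})$, then $\varepsilon_1=m_2\beta^2$ for some $\beta\in L_1$, so the ideal $(m_2)$ is the square of a principal ideal of $L_1$ and every odd prime dividing $m_2$ must ramify in $L_1$, i.e.\ divide $m_1$. If also $\sqrt{\varepsilon_2}\in L$, the symmetric argument forces the odd parts of $m_1$ and $m_2$ to coincide; since $m_1\neq m_2$ are squarefree this gives $\{m_1,m_2\}=\{u,2u\}$ and hence $m_3=2$. But the fundamental unit $1+\sqrt2$ of $\Q(\sqrt2)$ has norm $-1$, so $\sqrt{\varepsilon_3}\notin L$, and $Q(L)=8$ never occurs (while this same computation shows case (3) is consistent, e.g.\ $\Q(\sqrt2,\sqrt3)$, where $\sqrt{\varepsilon_3}=(1+\sqrt3)/\sqrt2$ and $\sqrt{\varepsilon_6}=\sqrt2+\sqrt3$). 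Without this step --- or an equivalent one --- your list would have to contain an eighth entry $\{\sqrt{\varepsilon_1},\sqrt{\varepsilon_2},\sqrt{\varepsilon_3}\}$, so the proposal as written does not prove the theorem as stated; the remaining bookkeeping (choosing the listed basis for each surviving subspace) is routine once this exclusion is in place.
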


Finally, for the first and third cases of our results, we have the following classification due to Scholz: 

 \begin{theorem} \label{scholz}
(\cite{scholz}, \textbf{cf.}, \cite{kaplandiv} Proposition 2.2), Let $N$ denote the norm map from $K$ to $\Q$, and $h$, $h^{+}$ denote respectively the class number and the narrow class number of $K$. Assume $p \equiv q \equiv 1 \mod{4}$. The following statements hold:
 \begin{enumerate} 
\item If $\left( \dfrac{p}{q} \right)=-1$, then $h^+ \equiv h \equiv 2 \pmod 4$ and $N(\eps)=-1$.
 \item If $\left( \dfrac{p}{q} \right)=1$, then:
 \begin{enumerate}[i-]
\item If  $\left( \dfrac{p}{q} \right)_{4}=-\left( \dfrac{q}{p} 
\small\right)_{4}$, then $h^+ \equiv 2h \equiv 4 \pmod 8$ and $N(\eps)=1$.
\item If  $\left( \dfrac{p}{q} \right)_{4}=\left( \dfrac{q}{p} \right)_{4}=-1$, then $h^+ \equiv h \equiv 4 \pmod 8$ and $N(\eps)=-1$.
\item If  $\left( \dfrac{p}{q} \right)_{4}=\left( \dfrac{q}{p} \right)_{4}=1$, then $h^+ \equiv 0 \pmod 8$.
\end{enumerate}
  \end{enumerate}
  \end{theorem}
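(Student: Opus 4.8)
The plan is to reduce the whole classification to two invariants---the $2$-adic valuation of $h^+$ and the sign of $N(\eps)$---and to read off everything else formally. The first step is to apply the genus theory recalled above: for $D=pq$ with $p\equiv q\equiv 1\pmod 4$ the discriminant is $d_K=pq$, which has exactly two prime divisors, so $r_2=r-1=1$ and the $2$-part of $\nclassgroup{K}$ is \emph{cyclic}. This is the decisive structural input, because for a cyclic $2$-group the order is $2^{v_2(h^+)}$ and $v_2(h^+)$ equals the largest $n$ with $r_{2^n}\!\bl\nclassgroup{K}\br=1$; so it suffices to climb the filtration $r_2\ge r_4\ge r_8$ until it drops to $0$.

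Next I would pin down the $4$-rank. By R\'edei's determination of the $4$-rank (equivalently, by counting the $s=2^{\,r_4+1}$ ambiguous simple forms in the principal genus recalled above), for $D=pq$ one has $r_4=1$ when $\left(\frac{p}{q}\right)=1$ and $r_4=0$ when $\left(\frac{p}{q}\right)=-1$; here quadratic reciprocity gives $\left(\frac{p}{q}\right)=\left(\frac{q}{p}\right)$ since both primes are $\equiv 1\pmod 4$. Thus $\left(\frac{p}{q}\right)=-1$ forces $\nclassgroup{K}\cong\Z/2\Z$, i.e. $h^+\equiv 2\pmod 4$, which is case (1); and $\left(\frac{p}{q}\right)=1$ gives $r_4=1$, so $4\mid h^+$, placing us in case (2). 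Inside case (2) I would separate the subcases by the $8$-rank, which is controlled by Scholz's rational biquadratic reciprocity through the symbols $\left(\frac{p}{q}\right)_4$ and $\left(\frac{q}{p}\right)_4$. The criterion is that $r_8=1$ exactly when $\left(\frac{p}{q}\right)_4=\left(\frac{q}{p}\right)_4=1$, which is subcase (iii) and gives $8\mid h^+$; in subcases (i) and (ii) the two symbols are not both $+1$, so $r_8=0$ and the cyclic group is exactly $\Z/4\Z$, i.e. $h^+\equiv 4\pmod 8$.

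Finally I would record the sign of $N(\eps)$ and convert the statements about $h^+$ into statements about $h$ via the standard relation $h^+=h$ if $N(\eps)=-1$ and $h^+=2h$ if $N(\eps)=+1$, which follows from analyzing the sign map $\mathcal{O}_K^{\times}\to\{\pm 1\}^2$ (a unit of norm $-1$ realizes every sign pattern, one of norm $+1$ realizes only half). The same biquadratic data fixes the norm: $N(\eps)=-1$ precisely when $\left(\frac{p}{q}\right)=-1$ (case 1) or when $\left(\frac{p}{q}\right)_4=\left(\frac{q}{p}\right)_4=-1$ (subcase ii), while $N(\eps)=+1$ when $\left(\frac{p}{q}\right)_4=-\left(\frac{q}{p}\right)_4$ (subcase i). Substituting: case (1) gives $h=h^+\equiv 2\pmod 4$; subcase (i) gives $h^+=2h\equiv 4\pmod 8$; subcase (ii) gives $h=h^+\equiv 4\pmod 8$; and subcase (iii) leaves $8\mid h^+$ with no constraint on $N(\eps)$.

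The genuine obstacle is concentrated in the $8$-rank determination together with the norm criterion, both of which rest on Scholz's reciprocity law (equivalently the R\'edei--Reichardt $8$-rank machinery): the genus-theoretic and $4$-rank steps are elementary bookkeeping by comparison, so the only deep input is precisely the biquadratic reciprocity that distinguishes subcases (i), (ii), and (iii). In an expository write-up I would therefore cite Scholz \cite{scholz} (cf.\ \cite{kaplandiv}, Proposition 2.2) for these two facts and present the remainder as the formal deduction sketched above.
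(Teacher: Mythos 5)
The paper never proves this statement: it is quoted in the preliminaries with the proof deferred entirely to Scholz \cite{scholz} (cf.\ \cite{kaplandiv}, Proposition 2.2), so there is no internal argument to measure yours against. Read as an expository reduction, your sketch is correct and is the standard way the classification is assembled. The elementary scaffolding is sound: $r_2=1$ by Gauss's genus theory, so the $2$-part of $\nclassgroup{K}$ is cyclic and its order is read off from the filtration $r_2 \ge r_4 \ge r_8$; R\'edei's criterion (equivalently, counting ambiguous simple forms in the principal genus) gives $r_4=1$ exactly when $\bigl(\tfrac{p}{q}\bigr)=1$; and the dictionary $h^+=h$ or $h^+=2h$ according to $N(\eps)=-1$ or $N(\eps)=+1$ is the usual index computation, so the congruences for $h$ follow formally from those for $h^+$. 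You are also right that the only deep inputs are the $8$-rank criterion and the determination of $N(\eps)$ in subcases (i) and (ii) --- but note that these two facts \emph{are} precisely the content of Scholz's theorem being stated, so if your write-up were offered as a proof rather than as a reduction it would be circular at exactly those points; since the paper itself only cites Scholz, your treatment is consistent with the paper's. One attribution quibble: in case (1) the norm statement does not require biquadratic reciprocity. If $N(\eps)=1$, write $\eps=x+y\sqrt{pq}$ and factor $(x-1)(x+1)=pqy^2$ to get $x\pm 1=2d_1y_1^2$, $x\mp 1=2d_2y_2^2$ with $d_1d_2=pq$ and $d_i\neq 1,pq$; reducing the resulting identity $d_1y_1^2-d_2y_2^2=\pm 1$ modulo the prime dividing $d_2$ forces $\bigl(\tfrac{p}{q}\bigr)=1$. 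Hence $\bigl(\tfrac{p}{q}\bigr)=-1$ already implies $N(\eps)=-1$ elementarily --- this is the same descent the paper itself runs in the proof of \Cref{doublesymbol-1}, so only subcases (i)--(iii) genuinely rest on Scholz's reciprocity.
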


\section{First Case}

Throughout this section, we focus on the case when $D=pq$ with $p \equiv 5 \mod{8}$ and $q \not \equiv 1 \mod{8}$. As mentioned before, the corresponding Iwasawa module is finite (\cite{ozakitaya}, \textbf{cf} \cite{mouhib}). First, we assume that $q \equiv 3 \mod{4}$. Under these conditions, we show that the Iwasawa module is isomorphic to $A_{0} \cong C_{2}$, where $C_{n}$ denotes the cyclic group of order $n$:

 \begin{proposition} \label{casep5q3}
 Let $K=\Q(\sqrt{D})$ with $D=pq$, $p \equiv 5 \mod{8}$ and $q \equiv 3 \mod{ 4}$. Then $\{ \sqrt{\eps \epsp}, \eps, \epst \}$ is a F.S.U. of $K_{1}$ and $X_{\infty} \cong A_{0} \cong C_{2}$.
\end{proposition}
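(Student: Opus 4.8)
The plan is to determine a fundamental system of units of $K_{1}$ by means of \Cref{kubotakuroda formula}, read off the Hasse index $Q(K_{1})$, combine it with the $2$-class numbers of the three quadratic subfields to compute $\al A_{1}\ar$, and finally invoke Fukuda's criterion (\Cref{fukuda stability}) to propagate the resulting equality up the whole tower.

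First I would set up the three quadratic subfields of $K_{1}=\Qext{\sqrt{2},\sqrt{D}}$, namely $\Qext{\sqrt 2}$, $K=\Qext{\sqrt D}$ and $\kbar=\Qext{\sqrt{2D}}$, and record the norms of their fundamental units. Here $\epst=1+\sqrt 2$ has norm $-1$, while the prime $q\equiv 3\bmod 4$ divides both $D$ and $2D$, so the negative Pell equations for $D$ and for $2D$ reduce to $X^{2}\equiv -1\bmod q$ and are unsolvable; hence $N(\eps)=N(\epsp)=1$. To place the system among the seven possibilities of \Cref{kubotakuroda formula}, I would first note that $\epst$ can never sit under a radical: applying the relative norm $N_{K_{1}/K}$ to a hypothetical relation $\eta^{2}=\pm\,\epst^{a}\eps^{b}\epsp^{c}$ with $\eta\in E(K_{1})$ gives $N_{K_{1}/K}(\eta)^{2}=(-1)^{a}\eps^{2b}$, and since the left-hand side is totally nonnegative (as $K_{1}$ is totally real) this forces $a$ to be even. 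Thus the only unit products that can become squares in $K_{1}$ are $\eps$, $\epsp$ and $\eps\epsp$.

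The heart of the argument — and the step I expect to be the main obstacle — is to show that exactly $\eps\epsp$ becomes a square in $K_{1}$, while neither $\eps$ nor $\epsp$ does. Writing $\eps=x+y\sqrt D$ with $x^{2}-Dy^{2}=1$, the identity $(1+\eps)^{2}=2(x+1)\eps$ gives $\sqrt{\eps}=(1+\eps)/\sqrt{2(x+1)}$, and likewise $\sqrt{\epsp}=(1+\epsp)/\sqrt{2(u+1)}$ for $\epsp=u+v\sqrt{2D}$. Since $K_{1}=\Qext{\sqrt2}(\sqrt D)$, an element $m\in K$ is a square in $K_{1}$ precisely when $m$ or $2m$ is a square in $K$; so deciding which of $\eps,\epsp,\eps\epsp$ is a square reduces to the rational square classes of $2(x\pm1)$, $2(u\pm1)$ and their product. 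Feeding in the factorizations forced by $x^{2}-1=pq\,y^{2}$ and $u^{2}-1=2pq\,v^{2}$, together with $p\equiv 5\bmod 8$ (so $\left(\frac{2}{p}\right)=-1$) and $q\equiv 3\bmod 4$, I would verify that $\eps$ and $\epsp$ are non-squares but $\eps\epsp$ is a square in $K_{1}$. Hence the nontrivial square classes among $\{\eps,\epsp,\eps\epsp\}$ are spanned by $\eps\epsp$ alone, so $Q(K_{1})=2$ and the system is of type $(4)$, namely $\{\sqrt{\eps\epsp},\eps,\epst\}$.

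It remains to count classes. The field $\Qext{\sqrt 2}$ has class number $1$. For $K$ and $\kbar$ the discriminants $4pq$ and $8pq$ each have the three prime divisors $2,p,q$, so genus theory gives narrow $2$-rank $r-1=2$; as $N(\eps)=N(\epsp)=1$ the ordinary $2$-class groups have $2$-rank $1$, and the Rédei/Kaplan $4$-rank computation under the present congruences gives $4$-rank $0$, so $A_{0}$ and $\aprime$ are cyclic of order $2$. \Cref{kubotakuroda formula} then yields
\begin{align*}
\al A_{1}\ar=\tfrac{1}{4}\,Q(K_{1})\,\al A_{0}\ar\,\al A(\Qext{\sqrt 2})\ar\,\al\aprime\ar=\tfrac{1}{4}\cdot 2\cdot 2\cdot 1\cdot 2=2=\al A_{0}\ar .
\end{align*}
Finally, because $D=pq\equiv 3\bmod 4$ the prime $2$ ramifies in $K$, and its unique prime above $2$ is totally ramified in $K_{\infty}/K$ (locally both $\Qext{\sqrt D}$ and the cyclotomic $\Z_{2}$-extension are totally ramified over $\Q_{2}$), so $n_{0}=0$ and $K_{\infty}/K$ is totally ramified. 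Since $\al A_{1}\ar=\al A_{0}\ar$, \Cref{fukuda stability} gives $\al A_{n}\ar=\al A_{0}\ar$ for all $n$, whence $X_{\infty}\cong A_{0}\cong C_{2}$.
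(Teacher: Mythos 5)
Your overall skeleton --- rule out $\epst$ under the radical by a norm argument, identify the F.S.U.\ via \Cref{kubotakuroda formula}, compute $\al A_{1}\ar$ from the class numbers of the three quadratic subfields, and conclude with \Cref{fukuda stability} --- is exactly the paper's strategy, and your bookkeeping is correct ($N(\eps)=N(\epsp)=1$, the count $\al A_{1}\ar=\frac{1}{4}\cdot 2\cdot 2\cdot 1\cdot 2=2=\al A_{0}\ar$, total ramification at $2$). But there is a genuine gap at precisely the step you yourself flag as ``the main obstacle'': you never prove that $\eps$ and $\epsp$ are non-squares in $K_{1}$ while $\eps\epsp$ is a square; you only say you ``would verify'' it by feeding the factorizations $x^{2}-1=pqy^{2}$ and $u^{2}-1=2pqv^{2}$ into a square-class computation. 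That claim is the entire arithmetic content of the proposition --- everything else is routine --- so as written the proof is incomplete. Concretely, the verification requires excluding the rational square classes $1$, $2$, $pq$, $2pq$ for both $x+1$ and $u+1$; this uses $\left(\frac{2}{p}\right)=\left(\frac{-2}{p}\right)=-1$ (from $p\equiv 5 \bmod 8$), the unsolvability of $X^{2}-pqY^{2}=-1$ and $X^{2}-2pqY^{2}=-1$ (from $q\equiv 3\bmod 4$), and the fundamentality of $\eps$ and $\epsp$. Once that is done, both $x+1$ and $u+1$ lie in the single nontrivial coset $\{p,2p,q,2q\}$ of square classes of divisors of $2D$ modulo $\{1,2,pq,2pq\}$, so their product automatically falls in the trivial coset and $\sqrt{\eps\epsp}\in K_{1}$; this closing observation is also missing from your sketch (without it one might worry that, say, $x+1\sim p$ and $u+1\sim q$ give the class $pq$ --- harmless, since $pq=D$ is a square in $K_{1}$).

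For comparison, the paper fills this step by genus theory rather than by Pell factorizations: by Kaplan's Propositions $A_{3}$ and $A_{4}$, exactly one ambiguous form among $g$, $l$, $\overline{l}$ (for $K$), and among $[-p,0,2q]$, $[q,0,-2p]$, $[-q,0,2p]$ (for $\kbar$), lies in the principal genus and is therefore equivalent to the principal form; this makes an ideal above a proper divisor of $2D$ principal in $K$ (resp.\ in $\kbar$), which yields $K_{1}(\sqrt{\eps})=K_{1}(\sqrt{p})$ and $K_{1}(\sqrt{\epsp})=K_{1}(\sqrt{p})$, hence $\sqrt{\eps\epsp}\in K_{1}$ and $\sqrt{\eps},\sqrt{\epsp}\notin K_{1}$. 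Your Pell-factorization route is viable --- it is essentially the method the paper itself uses in \Cref{doublesymbol-1} for the second case --- but the case analysis must actually be carried out for the argument to count as a proof. (A smaller instance of the same issue: your assertion that the $4$-rank is $0$ is also left unverified, though that one is acceptable as a citation, since it is exactly Kaplan's Propositions $A_{3}$ and $A_{4}$, which the paper invokes.)
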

\begin{proof}
The simple ambiguous forms of $K$ are
\begin{align*}
    f=[1,0,-pq], \; g=[p,0,-q], \; h=\pl 2, 1, \frac{1-pq}{2} \pr, \; l= \pl 2p,p,\frac{p-q}{2} \pr, \\
    \overline{f}=[-1,0,pq], \; \overline{g}=[q,0,-p], \; \overline{h}=\pl -2, 1, \frac{pq-1}{2} \pr, \; \overline{l}= \pl 2q,q,\frac{q-p}{2} \pr ,
\end{align*}
where exactly only one of the forms $g$, $l$ or $\overline{l}$ is in the principal genus, and the narrow class group of $K$ is isomorphic to $C_{2} \times C_{2}$ (\cite{kaplan}, Proposition $A_{3}$). Since $N(\eps)=1$, the class number differs by a factor of 2. Therefore, $A_{0} \cong C_{2}$.\\ 

Without loss of generality, assume that the form $l$ is in the principal genus. The principal genus has only two forms, which implies that $l$ is equivalent to the unit $f = [1, 0, -pq]$. Furthermore, since $l$ represents the number $2p$, so does $f$. Therefore, the equation  $x^2-pqy^2=2p$ has an integral solution. Since both $2$ and $p$ ramify in $K$, the ideal generated by $2p$ factorizes as $\langle 2p \rangle = \mathcal{L}^{2}$ for some ideal $\mathcal{L}$ of $K$. The ideals $\langle x-y \sqrt{pq} \rangle$, $\langle x+y \sqrt{pq} \rangle$ are conjugates and the ideals over $2$ and $p$ are self-conjugates. It follows by unique factorization that the ideal $\mathcal{L}$ is principal. \\

Consequently, we have $\langle 2p \rangle = \langle z \rangle ^2$ for some fractional ideal $\langle z\rangle$ of $K$. Therefore, $2p = z^2 \cdot \eps^n$ for some n.
If $n$ is even, then $\sqrt{2p} \in K_{1}$, which is impossible, since $ K_{1} = \Qext{\sqrt{2}, \sqrt{pq}}$. Thus, $n$ is odd. Let $\alpha = |z| \cdot \eps^{(n-1)/2}$, then $\sqrt{2}\sqrt{p} = \alpha \cdot \sqrt{\eps}$,
which implies that $K_{1}(\sqrt{p}) = K_{1}(\sqrt{\eps}) \neq K_{1}$. Since $K_{1}(\sqrt{p})=K_{1}(\sqrt{q})$, the other cases follow in a similar manner. In any case, we can conclude that $\sqrt{\eps} \not \in K_{1}$.\\

Following the same lines, due to Kaplan (\cite{kaplan}, §6, Proposition $A_{4}$), the narrow $2$-class group of $K^{\prime} = \Qext{\sqrt{2D}}$ is isomorphic to $C_{2} \times C_{2}$. Since $N(\epsp)=1$, we can conclude that  $A_{0}^{\prime} \cong C_{2}$.  By analyzing the generic characters of $K^{\prime}$, we can observe that exactly one of the forms $\left [-p, 0, 2q \right ]$,  $\left [q, 0, -2p \right ]$ or $\left [-q, 0, 2p \right ]$ is equivalent to the unit form $f = [1, 0, -2pq]$.  Hence, the ideal over one of the elements $2q$, $2p$, or $q$ is principal in $\kbar$ and $K_{1}\bl \sqrt{p} \br=K_{1} \bl \sqrt{\epsp} \br$, reasoning like before. Therefore, $K_{1}(\sqrt{\eps})=K_{1}(\sqrt{\epsp})$, which implies that $\sqrt{\eps \epsp} \in K_{1}$.\\

By \Cref{kubotakuroda formula}, the F.S.U of $K_{1}$ is $\{ \sqrt{\eps \epsp}, \eps, \epst \}$ and the Hasse unit index $q(K_{1})=[E(K_{1}) : \langle -1, \eps, \epsp, \epst\rangle]$ is equal to $2$. Hence,
    $\al A_{1}\ar = \frac{1}{2} \al A_{0} \ar \cdot \al A_{0}^{\prime} \ar = \al A_{0} \ar.$ Since the extension $K_{\infty}/K$ is totally ramified (the prime $2$ is ramifies in every quadratic subextension of $K_{1}/\Q$), it follows by \Cref{fukuda stability} that $X_{\infty} \cong A_{0}$.
\end{proof}
\vspace{0.5cm}

The case $q \equiv 5 \mod{8}$ is more complicated. Using the classification in \Cref{scholz}, we prove the following result:\\

\begin{proposition} \label{caseqequal5} 
 Let $K=\Q(\sqrt{D})$ with $D=pq$ and $p \equiv q \equiv 5 \mod 8$. Then the following statements hold:
\begin{enumerate}
\item  Assume $\left( \dfrac{p}{q} \right)=1$ and $\left( \dfrac{p}{q} \right)_{4}=-\left( \dfrac{q}{p} \right)_{4}$. Then $\{ \eps, \epsp, \epst \}$ is a F.S.U of $K_{1}$ and $X_{\infty} \cong A_{0}$.
\item Assume either $\left( \dfrac{p}{q} \right)=-1$, or $\left( \dfrac{p}{q} \right)=1$ with $\left( \dfrac{p}{q} \right)_{4}=\left( \dfrac{q}{p} \right)_{4}=-1$.
Then
\begin{align*}
    X_{\infty} \cong A_{0} \iff \sqrt{\eps \epsp \epst} \not \in K_{1}.
\end{align*}
In this case, $\{ \eps, \epsp, \epst \}$ is a F.S.U of $K_{1}$. Otherwise, $\{ \sqrt{\eps \epsp \epst}, \eps, \epst \}$ is the corresponding F.S.U and $\al A_{1}\ar = 2 \cdot |A_{0}|$.
\vspace{0.1cm}
\item  Assume $\left( \dfrac{p}{q} \right)=1$ and $\left( \dfrac{p}{q} \right)_{4}=\left( \dfrac{q}{p} \right)_{4}=1$. 
\vspace{0.1cm}
\begin{enumerate}
    \item[i-] If $N(\eps)=1$, then $\{ \eps, \epsp, \epst \}$ is a F.S.U of $K_{1}$ and $X_{\infty} \cong A_{0}$.
    \item [ii-] If $N(\eps)=-1$, then
\begin{align*}
    X_{\infty} \cong A_{0} \iff \sqrt{\eps \epsp \epst} \not \in K_{1}.
\end{align*}
In this case, $\{ \eps, \epsp, \epst \}$ is a F.S.U of $K_{1}$. Otherwise, $\{ \sqrt{\eps \epsp \epst}, \eps, \epst \}$ is the corresponding F.S.U and $\al A_{1}\ar = 2 \cdot |A_{0}|$.
\end{enumerate}
\end{enumerate}
\end{proposition}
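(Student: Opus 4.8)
The plan is to reduce the whole statement, exactly as in \Cref{casep5q3}, to the determination of the Hasse unit index $Q(K_{1})$ and of the orders $|A_{0}|$ and $|\aprime|$, and then to combine \Cref{kubotakuroda formula} with \Cref{fukuda stability}. Since $\Q_{1}=\Q(\sqrt{2})$ has class number one, \Cref{kubotakuroda formula} reads $|A_{1}|=\tfrac14\,Q(K_{1})\,|A_{0}|\,|\aprime|$. The prime $2$ ramifies in each quadratic subextension of $K_{1}/\Q$, so $K_{\infty}/K$ is totally ramified and \Cref{fukuda stability} gives $X_{\infty}\cong A_{0}$ precisely when $|A_{1}|=|A_{0}|$, i.e.\ precisely when $Q(K_{1})\,|\aprime|=4$. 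Every assertion in the proposition is therefore an assertion about $Q(K_{1})$ and $|\aprime|$ in the relevant Scholz regime.

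First I would fix the arithmetic inputs. \Cref{scholz} supplies the norm $N(\eps)$ in every case: $N(\eps)=1$ in Case (1) and in Case (3)(i), while $N(\eps)=-1$ in Case (2) and in Case (3)(ii); this dichotomy is the hinge of the argument. Next, using that $\kbar=\Q(\sqrt{2pq})$ has discriminant divisible by the three primes $2,p,q$ (so its narrow $2$-class group has $2$-rank $2$), I would feed the congruences $p\equiv q\equiv5\pmod 8$ into Kaplan's structure results \cite{kaplan} and, together with the value of $N(\epsp)$, pin down $|\aprime|=4$ uniformly across the three cases. With $|\aprime|=4$ in hand, the target condition becomes simply $Q(K_{1})=1$.

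The core of the work is then the identification of the F.S.U.\ of $K_{1}$ among the seven possibilities of \Cref{kubotakuroda formula}. Because $N(\epst)=-1$, every possibility that places $\epst$ under a square root is immediately excluded; the remaining nontrivial single and double products $\sqrt{\eps}$, $\sqrt{\epsp}$, $\sqrt{\eps\epsp}$ are shown not to lie in $K_{1}$ by the ideal-theoretic argument of \Cref{casep5q3}, namely by writing $\sqrt{u}\in K_{1}$ as an equality $K_{1}(\sqrt{u})=K_{1}(\sqrt{d})$ dictated by which ambiguous form sits in the principal genus, and checking that these fields do not coincide under $p\equiv q\equiv5\pmod 8$. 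This rules out possibilities (2)--(6) and leaves only possibility (1), with $Q(K_{1})=1$, and possibility (7), $\{\sqrt{\eps\epsp\epst},\eps,\epst\}$, with $Q(K_{1})=2$. Possibility (7) is admissible only when $\eps,\epsp,\epst$ all share the same norm, hence only when $N(\eps)=-1$: in Case (1) and Case (3)(i) it is excluded outright, forcing $Q(K_{1})=1$, $|A_{1}|=|A_{0}|$, and $X_{\infty}\cong A_{0}$; in Case (2) and Case (3)(ii) it is admissible and actually occurs exactly when $\sqrt{\eps\epsp\epst}\in K_{1}$, in which case $Q(K_{1})=2$ and $|A_{1}|=2|A_{0}|$, while otherwise $Q(K_{1})=1$ and $|A_{1}|=|A_{0}|$. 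This yields the two equivalences and the two F.S.U.\ descriptions.

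The main obstacle is the very last dichotomy: deciding the membership $\sqrt{\eps\epsp\epst}\in K_{1}$ when $N(\eps)=-1$. Here the norm bookkeeping no longer forbids possibility (7), and the presence of the norm-$(-1)$ unit $\epst=1+\sqrt{2}$ inside the triple product is exactly what resists the clean ideal-theoretic resolution that worked for the single and double products; the membership cannot be read off from the residue classes of $p$ and $q$ alone. This is why Cases (2) and (3)(ii) must be phrased as equivalences governed by that hypothesis rather than as unconditional computations of $X_{\infty}$.
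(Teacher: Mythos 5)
Your proposal is correct and follows essentially the same route as the paper's proof: Kaplan's Proposition $A_{2}$ gives $\aprime \cong C_{2}\times C_{2}$ and $N(\epsp)=-1$ uniformly, Scholz (or the hypothesis in case (3)) fixes $N(\eps)$, Kubota's classification filtered by norms together with the ideal-theoretic exclusion of $\sqrt{\eps}$ when $N(\eps)=1$ leaves only $\{\eps,\epsp,\epst\}$ or $\{\sqrt{\eps\epsp\epst},\eps,\epst\}$, and the class-number formula plus Fukuda's stability theorem close the argument. The one slip is attributing the exclusion of $\sqrt{\epsp}$ and $\sqrt{\eps\epsp}$ to the ideal-theoretic argument of \Cref{casep5q3}: for $\kbar$ that argument degenerates, since the form in the principal genus is $[-1,0,2pq]$ (yielding no proper divisor $d$ but rather $N(\epsp)=-1$), and these exclusions instead follow immediately from $N(\epsp)=-1$ via the norm restrictions in \Cref{kubotakuroda formula} — which is how the paper handles them, reserving the ideal argument solely for $\sqrt{\eps}$ in cases (1) and (3)(i).
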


\begin{proof}

 From (\cite{kaplan}, §4, Proposition $A_{2})$, the only form in the principal genus of $\kbar$ is $\overline{f}=[-1, 0, 2pq]$ and $A_{0}^{\prime} \cong C_{2} \times C_{2}$, independent of the conditions on the quadratic and biquadratic symbols. We can also deduce that $N(\epsp)=-1$.
\\

For the field $K$, only one of the forms $[-1, 0, pq]$, $[p,0,-q]$ or $[q, 0, -p]$ is equivalent to $\left[ 1,0,-pq \right]$, since these are the forms in the principal genus \cite{kaplandiv} and each class has exactly two simple representatives. If $N(\eps)=1$, then $-1$ is not represented by $[1, 0, -pq]$, so that one of $p$ or $q$ is represented by this form. We can then conclude that either the ideal over $p$ or $q$ in $K$ is principal and $K_{1}(\sqrt{p})=K_{1}(\sqrt{q})=K_{1}(\sqrt{\eps})$, using a similar argument as before. Hence, $\sqrt{\eps} \not \in K_{1}$. 
\\

If $\left( \dfrac{p}{q} \right)=1$ and $\left( \dfrac{p}{q} \right)_{4}=-\left( \dfrac{q}{p} \right)_{4}$, we can see that $N(\eps)=1$ due to \Cref{scholz}. Given that only the unit $\eps$ has norm $1$ and $\sqrt{\eps} \not \in K_{1}$, it follows from \Cref{kubotakuroda formula} that $\{ \eps, \epsp, \epst \}$ is the F.S.U. of $K_{1}$, the Hasse unit index $q(K_{1})=1$ and $\al A_{1}\ar = \al A_{0}\ar $. Since $K_{\infty}/K$ is totally ramified, we must have $X_{\infty} \cong A_{0}$ in this case. Now assume either $\left( \dfrac{p}{q} \right)=-1$, or $\left( \dfrac{p}{q} \right)=1$ with $\left( \dfrac{p}{q} \right)_{4}=\left( \dfrac{q}{p} \right)_{4}=-1$. From \Cref{scholz}, the norm of all the fundamental units is $-1$. Hence, the only possible fundamental systems for $K_{1}$ are $\{ \eps, \epsp, \epst \}$ or $\{ \sqrt{\eps  \epsp  \epst}, \epsp, \epst \}$. Therefore, $\al A_{1}\ar =q(K_{1})\al A_{0} \ar$, and $\al A_{1}\ar =\al A_{0}\ar $ if and only if  $ \sqrt{\eps  \epsp  \epst} \not \in K_{1}$. Finally, if $\left( \dfrac{p}{q} \right)=1$ with $\left( \dfrac{p}{q} \right)_{4}=\left( \dfrac{q}{p} \right)_{4}=1$, the result follows similarly, depending on whether $N(\eps)=1$ or not.
\end{proof}

\begin{remarkref}
The condition $ \sqrt{\eps  \epsp  \epst} \in K_{1}$ is explored in \cite{aziziequation}. Let $\eps = x+y\sqrt{D}$. Then from the congruence $x^2-y^2 \cong -1 \mod 4$, $x$ must be even and $y$ must be odd. Since $x^{2}+1 = Dy^2$, we have a factorization $(x+i)(x-i)=Dy^2$ in the unique factorization domain $\Z[i]$. Both primes split in $\Qext{i}$, therefore there exist primes $\pi_{1}$, $\pi_{2}$ of $\Z[i]$ such that $p = \pi_{1} \overline{\pi_{1}}$, $q = \pi_{2} \overline{\pi_{2}}$, where $\overline{\pi_{j}}$ denotes the complex conjugate, and these primes can be chosen to be primary.
\\

Following the calculations in (\cite{williams}, Theorem 1), we can assume that both $\pi_{1}, \pi_{2} $ divide $x+i$, changing to the conjugate if necessary. This choice turns out to be significant when $\left( \dfrac{p}{q} \right)=1$. The condition is then equivalent to 
\begin{align*}
     \bl \frac{1+i}{\pi_{1}} \br_{2} = \bl \frac{1+i}{\pi_{2}} \br_{2},
 \end{align*}

 where $\bl \frac{a}{b} \br_{2}$ represents the quadratic symbol in $\Z[i]$. This is given in \cite{aziziequation}, and in the case where $\left(\frac{p}{q}\right)=-1$, it is equivalent to $\Bigl( \frac{pq}{2} \Bigr)_{4} \bl \frac{2p}{q} \br_{4} \bl \frac{2q}{p} \br_{4} = -1$. The behavior when $ \sqrt{\eps  \epsp  \epst} \in K_{1}$ is more complex since it is hard to predict when the class number will stop growing.
 \end{remarkref}
 \section{Second Case}

 We now focus on the case when $D = p q_{1} q_{2}$ with $p \equiv 5 \mod{8} $, $q_{1} \equiv 3 \mod{8}$ and $q_{2} \equiv 3 \mod{4}$. Independent of $q_{2}$, we have the following result which will be helpful later in this section:

\begin{lemma} \label{k1hatsecondcase}
    Let $K=\Qext{\sqrt{D}}$ where $D = p q_{1} q_{2}$ with $p \equiv 5 \mod{8} $, $q_{1} \equiv 3 \mod{8}$ and $q_{2} \equiv 3 \mod{4}$. Then the 2-class group of $K_{1}^{\prime}=\Qextbi{\sqrt{(2+\sqrt{2})D}}$ is isomorphic to $ C_2 \times C_2$.
\end{lemma}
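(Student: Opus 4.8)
The plan is to treat $K_{1}^{\prime}$ as the quadratic extension $\Q_{1}(\sqrt{\mu})$ of $\Q_{1}=\Qext{\sqrt2}$, where $\mu=(2+\sqrt2)D$, and to run Chevalley's genus theory over $\Q_{1}$. The reason for choosing this base is that $\Q_{1}$ has class number $1$ and fundamental unit $\epst=1+\sqrt2$ of norm $-1$. Writing $G=\langle\sigma\rangle$ for $\mathrm{Gal}(K_{1}^{\prime}/\Q_{1})$ and using that $Cl(\Q_{1})=1$ (so that $c^{1+\sigma}$, being the image of $N_{K_{1}^{\prime}/\Q_{1}}(c)$, is trivial for every class $c$), the action of $\sigma$ is by inversion; hence the ambiguous classes are exactly the $2$-torsion, $Cl(K_{1}^{\prime})^{G}=Cl(K_{1}^{\prime})[2]$, of order $2^{r_{2}}$ with $r_{2}=r_{2}(A(K_{1}^{\prime}))$. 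It therefore suffices to show $r_{2}=2$ and that the $4$-rank vanishes.

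First I would record the ramification of $K_{1}^{\prime}/\Q_{1}$. Since $2+\sqrt2=\sqrt2\,\epst$, the ideal $(\mu)$ factors in $\Q_{1}$ as $(\sqrt2)(p)(q_{1})(q_{2})$. From $p\equiv5$ and $q_{1}\equiv3\pmod 8$ we get $\bl\frac{2}{p}\br=\bl\frac{2}{q_{1}}\br=-1$, so $(p)$ and $(q_{1})$ are inert; the prime $(\sqrt2)$ ramifies because $v_{(\sqrt2)}(\mu)=1$ is odd; and $(q_{2})$ is inert when $q_{2}\equiv3\pmod8$ and splits when $q_{2}\equiv7\pmod8$. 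As $K_{1}^{\prime}$ is totally real no archimedean place ramifies, so the number of ramified places is $t=4$ in the first subcase and $t=5$ in the second. Chevalley's ambiguous class number formula then reads
\begin{align*}
2^{r_{2}}=\#Cl(K_{1}^{\prime})^{G}=\frac{h(\Q_{1})\,2^{t}}{2\,[E(\Q_{1}):E(\Q_{1})\cap N_{K_{1}^{\prime}/\Q_{1}}((K_{1}^{\prime})^{\times})]}=\frac{2^{t-1}}{u},
\end{align*}
where $u=[E(\Q_{1}):E(\Q_{1})\cap N((K_{1}^{\prime})^{\times})]$, and $E(\Q_{1})^{2}\subseteq N((K_{1}^{\prime})^{\times})$ forces $u\in\{1,2,4\}$.

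The index $u$ is computed from the Hasse norm theorem: $\eta\in\{-1,\epst\}$ is a global norm iff every Hilbert symbol $(\eta,\mu)_{\mathfrak p}$ is trivial. At the inert primes these are tame symbols measuring whether $\eta$ is a square in the residue field. Now $-1$ is a square in $\mathbf{F}_{p^{2}}$ and in $\mathbf{F}_{q_{1}^{2}}$, whereas a short Frobenius computation gives $\epst^{q_{1}+1}=-1$ in $\mathbf{F}_{q_{1}^{2}}$, so $\epst$ is a non-square there precisely because $q_{1}\equiv3\pmod4$; thus $\epst$ and $-\epst$ fail to be norms. For $-1$: it is a non-square modulo a prime over $q_{2}$ exactly when that prime has residue field $\mathbf{F}_{q_{2}}$, i.e.\ in the split subcase. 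The single wild symbol at $(\sqrt2)$ never needs to be computed directly: it is pinned down by the Hilbert product formula. One concludes $u=2$ when $q_{2}\equiv3\pmod8$ and $u=4$ when $q_{2}\equiv7\pmod8$, so in either subcase $r_{2}=t-1-\log_{2}u=2$; note that these values are independent of the mutual residues of $p,q_{1},q_{2}$.

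It remains to exclude an element of order $4$. Here $Cl(K_{1}^{\prime})[2]=Cl(K_{1}^{\prime})^{G}$ is generated by the classes $[\mathfrak P]$ of the ramified primes, which are $2$-torsion (their squares are the classes of $\mathfrak p\mathcal{O}_{K_{1}^{\prime}}$, principal since $h(\Q_{1})=1$) and satisfy the relation $\prod_{\mathfrak P}[\mathfrak P]=[(\sqrt\mu)]=1$. To obtain $A(K_{1}^{\prime})\cong C_{2}\times C_{2}$ I would show the principal genus is trivial, i.e.\ that no nontrivial product of these classes lies in $Cl(K_{1}^{\prime})^{2}=Cl(K_{1}^{\prime})^{1-\sigma}$; equivalently, that the genus field of $K_{1}^{\prime}/\Q_{1}$ equals its Hilbert $2$-class field. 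This reduces to showing that the Rédei matrix of quadratic-residue symbols attached to $\{(\sqrt2),(p),(q_{1}),(q_{2})\}$ has full rank under the given congruences. Establishing this $4$-rank vanishing uniformly --- in $q_{2}\bmod8$ and in the auxiliary symbols --- is where the real work lies; by contrast the $2$-rank computation above is essentially mechanical.
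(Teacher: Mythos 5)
Your first half is sound and is essentially the paper's own argument: the paper also works over $\Q_{1}$, counts the ramified primes ($4$ or $5$ according to $q_{2} \bmod 8$), and uses Hilbert symbols to show that $\epst$ (and, when $q_{2}\equiv 7 \bmod 8$, also $-1$) is not a norm from $K_{1}^{\prime}$, concluding $r_{2}(A_{1}^{\prime})=2$ from the ambiguous class number formula. Your inversion trick (so that ambiguous classes equal the $2$-torsion) and your use of the product formula to avoid the wild symbol are fine variants; the paper instead gets the lower bound on the rank from the explicit unramified biquadratic extension $M=K_{1}^{\prime}(\sqrt{p},\sqrt{q_{1}q_{2}})$.

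The gap is in the second half, which is exactly the hard part of the lemma and which you leave as a plan rather than a proof. Two concrete problems. First, your assertion that $Cl(K_{1}^{\prime})[2]=Cl(K_{1}^{\prime})^{G}$ is generated by the classes of the ramified primes is not automatic: the strongly ambiguous classes can have index up to $\left[ E(\Q_{1})\cap N((K_{1}^{\prime})^{\times}) : N(E(K_{1}^{\prime})) \right]$ in the ambiguous classes, and in the case $q_{2}\equiv 3 \bmod 8$ you only showed $-1$ is a norm of some element, not of a unit; so a R\'edei-type computation on ramified-prime classes need not see all of $Cl[2]$. Second, and more fundamentally, the hypotheses of the lemma impose no condition whatsoever on the mutual symbols $\bl \frac{p}{q_{1}} \br$, $\bl \frac{p}{q_{2}} \br$, $\bl \frac{q_{1}}{q_{2}} \br$, which are precisely the kind of data a R\'edei matrix is built from; you give no argument that its rank is full uniformly in these symbols, and the paper does not prove the $4$-rank vanishing by any local or symbol-theoretic computation. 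Instead it imports a global input: by Mouhib--Movahhedi's classification, the $2$-class groups $A_{n}$ of all layers $K_{n}$ are cyclic, and since $K_{2}$ is one of the three quadratic subfields of the unramified $(2,2)$-extension $M/K_{1}^{\prime}$, the cyclicity of $A_{2}$ (so that $K_{2}$ has at most one unramified abelian extension of each degree, each of them cyclic) is used to force the $2$-Hilbert class field of $K_{1}^{\prime}$ to equal $M$, whence $A_{1}^{\prime}\cong C_{2}\times C_{2}$. This exploitation of the cyclotomic tower is the key idea missing from your proposal; without it, or a completed uniform R\'edei computation that you have not supplied, the proof is incomplete.
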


\begin{proof}
    We first show that the 2-rank of $A_{1}^{\prime}$ is $2$. Consider the extension $K_{1}/ \Q_{1}$. Then $\epst$ is not a norm from $K_{1}$ (\cite{azizi2000}, Theorem 1). Indeed, from (\cite{azizi2004}, Theorem 2.5), if $\mathcal{Q}$ is the ideal over $q_{1}$ in $\Q_{1}$ (there is only one), then the Hilbert symbol satisfies
    \begin{align*}
    \left( \dfrac{\epst,D}{\mathcal{Q}} \right) = \bl \dfrac{N_{\Q_{1}/\Q}(\epst),D}{q_{1}} \br = \bl \dfrac{-1,D}{q_{1}} \br = -1,
\end{align*}
    which implies that $\epst$ is not a norm, given that $K_{1} = \Q_{1}(\sqrt{D})$ \cite{azizi2001}.
\\

Following (\cite{mouhib}, Lemma 3.5), if $\mathcal{L}$ is not a 2-adic prime of $\Q_1$, then the extension $\Q_2 /\Q_1$ is unramified at $\mathcal{L}$. Since $\Q_{2}=\Q_{1}(\epst \sqrt{2})$, the Hilbert symbol satisfies:
\begin{align*}
    \left( \dfrac{\epst, \epst \sqrt{2}}{\mathcal{L}} \right)=1.
\end{align*}
Multiplying the symbols we obtain
\begin{align*}
    \left( \dfrac{\epst,D\epst \sqrt{2}}{\mathcal{Q}} \right)=-1.
\end{align*}

Thus, we know that $\epst$ is not a norm from $K_{1}^{\prime}=\Q_{1}(\sqrt{D\epst\sqrt{2}})$. Using a similar argument, one can conclude that $-1$ is not a norm if $q_{2} \equiv 7 \mod{8}$, since for a prime $\mathcal{Q}$ over $q_{2}$ satisfying $\left(\frac{2}{q_{2}}\right)=1$, Azizi \cite{azizi2000} showed that
\begin{align*}
    \left( \dfrac{-1,D}{\mathcal{Q}} \right)= \left(\frac{-1}{q_{2}}\right) = -1,
\end{align*}
whereas for $q_{2} \equiv 3 \mod{8}$, this symbol is always one.
\\

Next, we apply the genus formula \cite{genusformula} to the extension $K_{1}^{\prime}/\Q_{1}$. If $q_{2} \equiv 3 \mod{8}$, there are $4$ ramifying primes in this extension: the $3$ primes from the divisors of $D$, and one prime over $2$. Hence, $r_{2}(A_{0}^{\prime})=4-1-e=3-e$,
where $e = [E(\Q_{1}): N_{K_{1}^{\prime}/\Q_{1}}(K_{1}^{\prime \; *})]$ is the norm unit index. Thus, the $2$-rank is at most $2$ ($\epst$ is not a norm). Since the extension $K_{1}^{\prime} (\sqrt{p}, \sqrt{q_{1}q_{2}})$ is unramified and biquadratic over $K_{1}^{\prime}$, it follows that the $2$-rank of the class group is exactly $2$. When $q_{2} \equiv 7 \mod{8}$, there are $5$ primes that ramify, since $q_{2}$ splits in $\Q_{1}$. Given that $-1$ and $\epst$ are not norms, we must have $r_{2}(A_{0}^{\prime})=4-e=4-2=2.$ In any case, the class group of $K_{1}^{\prime}$ is of the form $C_{2^m} \times C_{2^n}$ for some $m,n \geq 1$.
\\

The cyclicity of the groups $A_{n}$ plays a significant role in the following argument. Consider the intermediate subfields  of the extension $M = K_{1}^{\prime} (\sqrt{p}, \sqrt{q_{1}q_{2}})$ over $K_{1}^{\prime}$; i.e, the fields
\begin{align*}
    M_{1} = \Qextbi{\sqrt{2+\sqrt{2}}, \sqrt{pq_{1}q_{2}}}=K_{2} \; ;\\
    M_{2} =\Qextbi{\sqrt{(2+\sqrt{2})p}, \sqrt{q_{1}q_{2}} } ;\\
    M_{3} =\Qextbi{\sqrt{(2+\sqrt{2})q_{1}q_{2}}, \sqrt{p} }.
\end{align*}

Suppose $H$ is the 2-class field of $K_{1}^{\prime}$ with Galois group $ C_{2^m} \times C_{2^n}$. If $m, n \geq 2$, the abelian extension $M/K_{1}^{\prime}$ is unramified and its Galois group $\text{Gal}(M/K_{1}^{\prime}) \cong C_{2} \times C_{2}$ is contained in both $C_{4} \times C_{2}$ and $C_{2} \times C_{4}$, which are proper subgroups of $\text{Gal}(H/K_{1}^{\prime})$. It follows that the two unramified extensions corresponding to these subgroups contain both $M$ and $M_{1}=K_{2}$, which is impossible since $K_{2}$ cannot have two different unramified abelian extensions of the same order, given that its class group is cyclic.
\\

Therefore, $A_{1}^{\prime} \cong C_{2} \times C_{2^n}$ for some $n \geq 1$. The extension $H/K_{2}$ is unramified and abelian, hence it must be cyclic. It follows that $Gal(H/M)$ is also cyclic, and it is contained in $Gal(H/M_{i})$ for $i=1,2,3$, which are the $3$ different subgroups of index two of $A_{1}^{\prime}$. Let $A_{1}^{\prime} \cong \langle a \rangle \times \langle b \rangle$, where $a$ has order $2$ and $b$ has order $2^n$. The subgroups of index $2$ correspond to $\langle b \rangle$, $\langle a \cdot b \rangle$, and $\langle  a \rangle \times \langle b^{2} \rangle$. The intersection of these three groups is trivial, which implies that $Gal(H/M)$ is trivial. Hence, $H=M$ and he result follows, since $Gal(M/K_{1}^{\prime}) \cong C_{2} \times C_{2}$.
\end{proof}
\vspace{0.2cm}

In \cite{laxmi}, the authors proved that $X_{\infty} \cong A_{0}$ in the following cases:
\begin{enumerate}
    \item $q_{2} \equiv 3 \mod 8$, where $\left(\frac{q_{1}q_{2}}{p} \right)=-1$.
    \item $q_{2} \equiv 3 \mod 8$, where $\left(\frac{q_{1}}{p} \right)=\left(\frac{q_{2}}{p} \right)=1$, and the ideal above $p$ in $K$ is not principal.
    \item $q_{2} \equiv 7 \mod 8$, where $\left(\frac{q_{2}}{p} \right)=-1$.
    \item $q_{2} \equiv 7 \mod 8$, where $\left(\frac{q_{2}}{p} \right)=\left(\frac{q_{1}}{p} \right)=1$, and the ideal above $q_{2}$ in $K$ is not principal.
\end{enumerate}

We focus on the case where $q_{2} \equiv 3 \mod{8}$ and $\left(\frac{q_{2}}{p} \right)=\left(\frac{q_{1}}{p} \right)=-1$ to produce a new type of field satisfying Greenberg's conjecture. First, we have the following result, which will give us a necessary and sufficient condition for our example:

\begin{lemma} \label{doublesymbol-1}
    Let $K = \Qext{\sqrt{D}}$ where $D=pq_{1}q_{2}$, such that $p \equiv 5 \mod{8}$, $q_{1}\equiv q_{2} \equiv 3 \mod{8}$, and $\left(\frac{p}{q_{1}}\right) =\left(\frac{p}{q_{2}}\right) = -1$. Then the ideal above $p$ in $K$ is principal. Furthermore, the only possible F.S.U. of $K_{1}$ are $\{\eps, \epsp, \epst \}$ or $\{ \sqrt{\eps\epsp}, \eps, \epst\}$.\\
    
    Let $\epsp=r+s\sqrt{2D}$. Then  $\sqrt{\eps \epsp} \in K_{1}$ if and only if $p(r+1)$ is a square. In this case, 
    \begin{align*}
    N_{K_{1}/K}(\sqrt{\eps \epsp})=-\eps,\; N_{K_{1}/K^{\prime}}(\sqrt{\eps \epsp})=\epsp, \; N_{K_{1}/\Q_{1}}(\sqrt{\eps \epsp})=-1.
    \end{align*}
\end{lemma}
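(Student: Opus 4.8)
The plan is to establish the three assertions in turn, reusing the genus‑theoretic description of ambiguous forms and Kubota's unit classification already used in \Cref{casep5q3} and \Cref{caseqequal5}. For the principality of the prime $\mathfrak p$ above $p$, I would list the simple ambiguous forms of $K$ and evaluate their generic characters at $p,q_1,q_2$. Because $p\equiv5\pmod8$ gives $\left(\frac{-1}{p}\right)=1$ and $\left(\frac2p\right)=-1$, because $q_i\equiv3\pmod4$, and because $\left(\frac{p}{q_1}\right)=\left(\frac{p}{q_2}\right)=-1$, the form $[-p,0,q_1q_2]\sim[q_1q_2,0,-p]$ has all generic characters equal to $1$ while $[p,0,-q_1q_2]$ does not; thus $[-p,0,q_1q_2]$ lies in the principal genus. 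Kaplan's analysis of this family gives narrow $2$‑class group $C_2\times C_2$ (so $4$‑rank $0$), whence the principal genus meets the ambiguous classes only in the identity. Therefore $[-p,0,q_1q_2]$ is narrow‑principal, $-p=a^2-Db^2$ is solvable, and $\mathfrak p=(a+b\sqrt D)$ is principal with a generator of norm $-p$.

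For the possible F.S.U., note that $q_1\equiv3\pmod4$ divides both $D$ and $2D$, so $N(\eps)=N(\epsp)=1$ while $N(\epst)=-1$. In \Cref{kubotakuroda formula} the norm restriction then eliminates every case in which $\epst$ sits under a radical, i.e. cases $5$, $6$ and $7$ (case $7$ would force all three norms equal). To discard the cases $\sqrt\eps\in K_1$ and $\sqrt\epsp\in K_1$ I would argue as in \Cref{casep5q3}: from $\mathfrak p=(\pi)$ with $N(\pi)=-p$ one gets $\pi^2=p\,\eps^m$ with $m$ odd, so $K_1(\sqrt\eps)=K_1(\sqrt p)\neq K_1$; and $\sqrt\epsp\in K_1$ is equivalent to $2\epsp$ being a square in $\kbar$, hence to the prime above $2$ in $\kbar$ being principal, which fails because its generic character at $p$ equals $\left(\frac2p\right)=-1$. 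Only cases $1$ and $4$ survive, giving $\{\eps,\epsp,\epst\}$ or $\{\sqrt{\eps\epsp},\eps,\epst\}$.

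To obtain the criterion I would make $\sqrt{\eps\epsp}$ explicit. Writing $\eps=x+y\sqrt D$ and $\epsp=r+s\sqrt{2D}$, the norm‑one identities $(1+\eps)^2=2(1+x)\eps$ and $(1+\epsp)^2=2(1+r)\epsp$ yield
\[
\sqrt{\eps\epsp}=\frac{(1+\eps)(1+\epsp)}{2\sqrt{(1+x)(1+r)}},
\]
and since the numerator lies in $K_1$, we get $\sqrt{\eps\epsp}\in K_1$ iff $(1+x)(1+r)$ lies in one of the rational square classes $1,2,D,2D$. Part one pins the class of $1+x$: from $\pi^2=p\,\eps^m$ one sees $p\eps$ is a square in $K$, and comparing with $2(1+x)\eps=(1+\eps)^2$ forces $2(1+x)/p$ to be a square in $K$; the $2$‑adic and quadratic‑residue bookkeeping (using $p\equiv5\pmod8$ and $q_i\equiv3\pmod8$) then identifies the class of $1+x$ as $2q_1q_2$. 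A parallel analysis of $\epsp$ in $\kbar$ shows that the surviving possibility is $1+r$ in the class of $p$, i.e. that $\sqrt{\eps\epsp}\in K_1$ if and only if $p(1+r)$ is a perfect square, in which case $(1+x)(1+r)$ has square class $2D$.

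Finally, when $p(1+r)$ is a square the displayed formula reads $\sqrt{\eps\epsp}=\frac{(1+\eps)(1+\epsp)}{2c\sqrt{2D}}$ with $c\in\Q^\times$, and the three norms follow by Galois descent. Letting $\sigma,\tau,\sigma\tau$ generate $\mathrm{Gal}(K_1/\Q_1)$, $\mathrm{Gal}(K_1/K)$, $\mathrm{Gal}(K_1/\kbar)$ and using $\sigma\eps=\eps^{-1}$, $\tau\epsp=\epsp^{-1}$, $\sigma\tau\eps=\eps^{-1}$ together with the sign each automorphism attaches to $\sqrt{2D}$, one finds $\tau(\sqrt{\eps\epsp})=-\sqrt{\eps\epsp}/\epsp$, $\sigma\tau(\sqrt{\eps\epsp})=\sqrt{\eps\epsp}/\eps$ and $\sigma(\sqrt{\eps\epsp})=-(\sqrt{\eps\epsp})^{-1}$, so that $N_{K_1/K}(\sqrt{\eps\epsp})=-\eps$, $N_{K_1/\kbar}(\sqrt{\eps\epsp})=\epsp$ and $N_{K_1/\Q_1}(\sqrt{\eps\epsp})=-1$. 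The main obstacle is exactly the square‑class bookkeeping of the third paragraph: reducing membership to the square class of $(1+x)(1+r)$ is routine, but showing that the class of $1+x$ is rigidly $2q_1q_2$ and that the condition collapses to the single clean statement ``$p(r+1)$ is a square''—rather than a coarser disjunction of several classes (in particular excluding the class $q_1q_2$ for $1+r$)—requires the full force of the congruence hypotheses together with an Azizi‑type Hilbert‑symbol computation in $\kbar$ in the spirit of \cite{aziziequation}.
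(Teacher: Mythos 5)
Your skeleton coincides with the paper's: Kaplan's genus characters show that $[-p,0,q_1q_2]$ is the unique form in the principal genus and that the narrow $2$-class group is $C_2\times C_2$, whence the prime above $p$ is principal; Kubota's norm restrictions (with $N(\eps)=N(\epsp)=1$, $N(\epst)=-1$) together with $\sqrt{\eps}\notin K_1$, $\sqrt{\epsp}\notin K_1$ leave exactly the two stated systems; and the criterion is reduced to determining the rational square classes of $x+1$ and $r+1$ (your identity $(1+\eps)^2=2(1+x)\eps$ is the same device as the paper's $z=y_1\sqrt{2d_1}+y_2\sqrt{2d_2}$ with $z^2=2\eps$). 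Two of your variants are sound and even tidier than the paper's: you exclude $\sqrt{\epsp}\in K_1$ via the genus character of the $2$-adic prime of $\kbar$, where the paper instead quotes Azizi to exclude $d_2\in\{1,D\}$ in the factorization of $r^2-1$; and you obtain the three norms by letting the Galois group act on the closed formula for $\sqrt{\eps\epsp}$, where the paper multiplies the explicit elements $\gamma_1=\sqrt{p\eps}$ and $\gamma_2=\sqrt{2p\epsp}$. Your computed actions and resulting norms agree with the paper's.

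The gap is that the pivotal exclusions are asserted, not proved. The entire content of the equivalence ``$\sqrt{\eps\epsp}\in K_1$ iff $p(r+1)$ is a square'' is: (a) $x+1$ cannot lie in the square class $2p$ (hence lies in $2q_1q_2$), and (b) $r+1$ cannot lie in the classes $q_1q_2$, $2p$, or $2q_1q_2$, so that of the four classes making $(1+x)(1+r)$ a square in $K_1$ only $p$ survives. These are precisely where the hypotheses $\left(\frac{p}{q_1}\right)=\left(\frac{p}{q_2}\right)=-1$, $q_i\equiv 3\pmod 8$, $p\equiv 5\pmod 8$ enter, and you defer them to ``bookkeeping'' and an ``Azizi-type Hilbert-symbol computation in the spirit of \cite{aziziequation}'' --- a reference that in fact treats the different unit $\sqrt{\eps\epsp\epst}$. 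No Hilbert symbols are needed: the paper settles each case in one line by reducing the complementary factorization modulo $q_1$ or $p$. If $x+1=2py_1^2$, then $q_1\mid x-1$ gives $2py_1^2\equiv 2\pmod{q_1}$, so $\left(\frac{p}{q_1}\right)=1$, a contradiction. If $r\pm1=2ps_2^2$ and $r\mp1=q_1q_2s_1^2$, then $q_1q_2s_1^2\equiv \mp2\pmod{p}$, so $\left(\frac{q_1q_2}{p}\right)=\left(\frac{\mp2}{p}\right)=-1$, contradicting $\left(\frac{q_1q_2}{p}\right)=\left(\frac{p}{q_1}\right)\left(\frac{p}{q_2}\right)=1$. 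If $r+1=2q_1q_2s_2^2$ and $r-1=ps_1^2$, then $ps_1^2\equiv-2\pmod{q_1}$, so $\left(\frac{p}{q_1}\right)=\left(\frac{-2}{q_1}\right)=1$, again a contradiction. Supplying these three computations, together with the enumeration of the possible classes coming from $(x-1)(x+1)=Dy^2$ and $(r-1)(r+1)=2Ds^2$ (with $\gcd(r+1,r-1)=2$), is what actually proves the lemma; without them your outline establishes only the reduction, not the characterization.
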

\vspace{0.1cm}

\begin{proof}
The simple ambiguous forms of $K$ are:
\begin{align*}
    f=[1,0,-pq_{1}q_{2}], \; g=[p,0,-q_{1}q_{2}], \; h=[q_{1}, 0, -pq_{2}], \; l=[q_{2},0,-pq_{1}], \\
    \overline{f}=[-1,0,pq_{1}q_{2}], \; \overline{g}=[-p,0,q_{1}q_{2}], \; \overline{h}=[-q_{1}, 0,pq_{2}], \; \overline{l}=[-q_{2},0,pq_{1}].
\end{align*}
There are some minor typos in Kaplan's paper (\cite{kaplan}, Proposition $A_{5})$. Let $\left( \frac{p}{q_{1}} \right)=\tau$, $\left( \frac{p}{q_{2}} \right) = \tau^{\prime}$ and $\left( \frac{q_{1}}{q_{2}} \right) = \tau^{\prime \prime}$. Then the table of generic characters is: 
\renewcommand{\arraystretch}{2}
\begin{table}[h]
    \centering
\begin{tabular}{ |c|c|c|c|c|c|c|c|c| } 
\hline
 & $f$ & $\overline{f}$ & $g$ & $\overline{g}$ & $h$ & $\overline{h}$ & $l$ & $\overline{l}$\\
\hline
 $\left( \frac{m}{p} \right)$ & $1$ & $1$ & $\tau \tau^{\prime}$ & $\tau \tau^{\prime}$ & $\tau$ & $\tau$ & $\tau^{\prime}$ & $\tau^{\prime}$ \\ 
 $\left( \frac{m}{q_{1}} \right)$ & $1$ & $-1$ & $\tau$ & $-\tau$ & $\tau\tau^{\prime \prime}$ & $-\tau\tau^{\prime \prime}$ & $-\tau^{\prime \prime}$ & $ \tau^{\prime \prime}$ \\ 
 $\left( \frac{m}{q_{2}} \right)$ & $1$ & $-1$ & $\tau^{\prime}$ & $-\tau^{\prime}$ & $\tau^{\prime \prime}$ & $-\tau^{\prime \prime}$ & $-\tau^{\prime} \tau^{\prime \prime}$ & $ \tau^{\prime} \tau^{\prime \prime}$ \\ 
\hline
\end{tabular}

\end{table}

Hence, $\overline{g}$ is the only form in the principal genus (all the generic characters are $1$) and the $2$-part of the narrow class group is $C_{2} \times C_{2}$. Since the norm of the fundamental unit is $1$ (a prime congruent to $3$ modulo $4$ divides the discriminant), we must have $A_{0} \cong C_{2}$. Moreover, $q_{1}q_{2}$ is represented by $[1, 0, -D]$, which implies that $K_{1}(\sqrt{\eps})=K_{1}(\sqrt{q_{1}q_{2}})=K_{1}(\sqrt{p})$ and $\sqrt{\eps} \not \in K_{1}$. 
\\

 Replacing $\eps$ with $\eps^3$ if necessary, we can assume that $\eps=x+y\sqrt{D}$ where $x, y \in \Z$  with $x$ odd and $y$ even. Since $x^2-Dy^2=1$, due to the factorization $(x-1)(x+1)=Dy^2$, we must have
\begin{equation*}
    \begin{cases}
        x + 1 = 2d_{1} y_{1}^{2}, \\
        x - 1 = 2d_{2} y_{2}^{2},
    \end{cases}
\end{equation*}
with $d_{1}d_{2}=D$ and $2y_{1}y_{2}=y$. From \cite{azizi2000}, $d_{i}$ cannot be neither $1$ nor $D$. \\

Let $z = y_{1} \sqrt{2d_{1}}+y_{2}\sqrt{2d_{2}}$, then 
\begin{align*}
    z^2=(2d_{1}y_{1}^2+2d_{2}y_{2}^2+2y\sqrt{D})=2(x+y\sqrt{D})=2\eps.
\end{align*}
Hence, $z=\sqrt{2 \eps}$ and $ 2\sqrt{d_{1}\eps}=\sqrt{2d_{1}}z=2(y_{1}d_{1}+y_{2}\sqrt{D})=2\gamma$, where
$N_{K/\Q}(\sqrt{d_{1}\eps})=N_{K/\Q}(\gamma)=d_{1}(d_{1}y_{1}^2-d_{2}y_{2}^2)=d_{1}$. Notice that $N_{K/Q}(\sqrt{d_{2}\eps})=-d_{2}$.\\

In particular, since $K_{1}(\sqrt{\eps})=K_{1}(\sqrt{p})=K_{1}(\sqrt{d_{i}})$, we can assume that $d_{1}=p$ or $d_{1}=q_{1}q_{2}$.
If $d_{1}=p$, then
\begin{align*}
    1=\left(\frac{2p}{q_{1}} \right)=\left(\frac{2py_{1}^2}{q_{1}} \right)=\left(\frac{x+1}{q_{1}} \right)=\left(\frac{x-1+2}{q_{1}} \right)= \left(\frac{2}{q_{1}} \right)=-1,
\end{align*}
which is impossible. Therefore, $d_{2}=p$ and $\sqrt{p \eps} = py_{2}+y_{1}\sqrt{D}=\gamma_{1}$, where $\gamma_{1}$ has norm $-p$ in $\Q$.\\

Let  $\epsp = r + s \sqrt{2D}$, where $r$ is odd and $s$ is even. Like before, we have 
\begin{equation*}
    \begin{cases}
        r \pm 1 = d_{1} s_{1}^{2}, \\
        r \mp 1 = 2d_{2} s_{2}^{2},
    \end{cases}
\end{equation*}
with $d_{1}d_{2}=D$ and $s_{1}s_{2}=s$. The cases $d_{2}=1$ or $D$ cannot happen again due to \cite{azizi2000}. This implies that $\sqrt{\epsp} \not \in K_{1}$, since $\sqrt{d_{1} \epsp} \in K_{1}$ for some proper divisor $d_{1}$ of $D$. In particular, $K_{1}(\sqrt{\epsp}) = K_{1}(\sqrt{\eps})$ if and only if $d_{1}$ is equal to $p$ or $q_{1}q_{2}$. If $d_{1}=q_{1}q_{2}$, then $\left(\frac{q_{1}q_{2}}{p} \right)= \left(\frac{\pm 2}{p} \right)=-1$, which is absurd due to our conditions on quadratic symbols. Thus, $d_{1}=p$. 
\\

If
\begin{equation*}
    \begin{cases}
        r - 1 = p s_{1}^{2}, \\
        r + 1 = 2q_{1}q_{2} s_{2}^{2},
    \end{cases}
\end{equation*}
then $\left(\frac{p}{q_{1}} \right)= \left(\frac{- 2}{q_{1}} \right)=1$, which is a contradiction. Therefore,
\begin{equation*}
    \begin{cases}
        r + 1 = p s_{1}^{2}, \\
        r - 1 = 2q_{1}q_{2} s_{2}^{2},
    \end{cases}
\end{equation*}
is the only possibility, and $\sqrt{\eps \epsp} \in K_{1}$ if only if $p(r+1)$ is a square.
\\

Let $w = s_{1}\sqrt{p}+s_{2}\sqrt{2q_{1}q_{2}}$. Then, as before,  $\sqrt{2p\epsp} = \sqrt{p} \cdot w = ps_{1}+s_{2}\sqrt{2D}=\gamma_{2}$, where the norm of $\gamma_{2}$ is equal to $2p$. Multiplying these equalities, we obtain $\sqrt{2}p \cdot\sqrt{\eps \epsp} =\gamma_{1} \gamma_{2}$ and the result on the norms follows.
\end{proof}
\vspace{0.5cm}

The following result is derived from (\cite{kumakawa}, Lemma 2.1) and proved in (\cite{laxmi}, Lemma 5.1). Even though the authors prove it for the current case, it holds with more generality. Consider the following diagram
\begin{figure}[h] 
\centering
 \begin{tikzpicture}

    \node (Q1) at (0,0) {$\Q_n$};
    \node (Q2) at (3,2) {$K_{n}^{\prime}$};
    \node (Q3) at (0,2) {$K_n$};
    \node (Q4) at (-3,2) {$\Q_{n+1}$};  
    \node (Q5) at (0,4) {$K_{n+1}$};
    
    \draw (Q1)--(Q2);
    \draw (Q1)--(Q3); 
    \draw (Q1)--(Q4);
    \draw (Q2)--(Q5);
    \draw (Q3)--(Q5);
    \draw (Q4)--(Q5);
    
    \node (R1) at (2.1,3.1) {$\sigma\tau$};
    \node (R2) at (0.4, 3.1) {$ \tau $};
    \node (R3) at (-2, 3.1) {$ \sigma $};
     \end{tikzpicture}
    \end{figure}

where both $\sigma$ and $\tau$ are generators of the corresponding Galois groups. Then we have the following:

\begin{lemma}\label{inequality of intermediate fields}
Let $K = \Q(\sqrt{D})$ with $D \equiv 5 \pmod 8$. Then 
\begin{align*}
    \al A_{n+1}\ar \leq \dfrac{\al A_{n}^{\prime}\ar \cdot \al A_{n} \ar}{2 \cdot \left[ E(K_n): N_{K_{n+1}/K_{n}}(E(K_{n+1})) \right]} \leq \frac{1}{2} \cdot \al A_n \ar \cdot\al A_{n}^{\prime}\ar 
\end{align*}
\end{lemma}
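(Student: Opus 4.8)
The plan is to compute $\al A_{n+1}\ar$ from the action of $\mathrm{Gal}(K_{n+1}/\Q_n)\cong C_2\times C_2=\langle\sigma,\tau\rangle$ depicted in the diagram above, using crucially that the two rational layers $\Q_n,\Q_{n+1}$ have trivial $2$-class group. First I would observe that the composite of the norm $N_{K_{n+1}/\Q_{n+1}}$ with the extension-of-ideals map is the operator $1+\sigma$ on $A_{n+1}$; since the $2$-part of the class group of $\Q_{n+1}$ is trivial, this operator annihilates $A_{n+1}$, so $\sigma$ acts as inversion. Hence the two relevant norm operators become $1+\tau$ (the norm to $K_n$) and, because $c^{\sigma\tau}=(c^{\tau})^{-1}$, the operator $1-\tau$ (the lift of the norm to $K_{n}^{\prime}$).

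Because $D\equiv5\pmod8$ the prime $2$ is inert in $K$ and totally ramified in the cyclotomic tower, so $K_{n+1}/K_n$ and $K_{n+1}/K_{n}^{\prime}$ are ramified and both norm maps $N_{K_{n+1}/K_n}\colon A_{n+1}\twoheadrightarrow A_n$ and $N_{K_{n+1}/K_{n}^{\prime}}\colon A_{n+1}\twoheadrightarrow A_{n}^{\prime}$ are surjective. Splitting $A_{n+1}$ through the relation $\al A_{n+1}\ar=\al\ker(1-\tau)\ar\cdot\al\mathrm{im}(1-\tau)\ar$, identifying $\ker(1-\tau)=A_{n+1}^{\langle\tau\rangle}$ with the ambiguous classes of $K_{n+1}/K_n$ and $\mathrm{im}(1-\tau)$ with the image of $A_{n}^{\prime}$ under extension of ideals, I obtain
\[
\al A_{n+1}\ar=\al A_{n+1}^{\langle\tau\rangle}\ar\cdot\frac{\al A_{n}^{\prime}\ar}{\al\kappa^{\prime}\ar},
\]
where $\kappa^{\prime}$ is the capitulation kernel of $K_{n+1}/K_{n}^{\prime}$. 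Chevalley's ambiguous class number formula for the ramified quadratic extension $K_{n+1}/K_n$ evaluates $\al A_{n+1}^{\langle\tau\rangle}\ar$ as $\al A_n\ar$ times the product of the ramification indices, divided by $2\cdot[E(K_n):E(K_n)\cap N_{K_{n+1}/K_n}K_{n+1}^{*}]$; substituting this yields a closed expression for $\al A_{n+1}\ar$ in terms of $\al A_n\ar$, $\al A_{n}^{\prime}\ar$, the ramified-prime count, and unit and capitulation indices. Equivalently, one may package the same data through Kuroda's class number formula for the biquadratic extension $K_{n+1}/\Q_n$, in which the $\Q$-layers drop out.

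The decisive step, which I expect to be the main obstacle, is to collapse all these correction factors into the single index $[E(K_n):N_{K_{n+1}/K_n}E(K_{n+1})]$ together with exactly one factor of $2$. Here I would invoke the genus-theory exact sequences linking the capitulation kernel $\kappa^{\prime}$, the ramification contribution $\prod_v e_v$, and the gap $[E(K_n)\cap N_{K_{n+1}/K_n}K_{n+1}^{*}:N_{K_{n+1}/K_n}E(K_{n+1})]$; the hypothesis $D\equiv5\pmod8$ fixes the ramification of $2$ (one inert prime that becomes totally ramified) and thereby controls the prime count. The passage is an inequality rather than an equality because one of these correction terms is only guaranteed to be at least $1$. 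Once the first inequality is in place, the second is immediate, since $[E(K_n):N_{K_{n+1}/K_n}E(K_{n+1})]\ge1$.
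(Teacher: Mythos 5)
Your opening move is the same as the paper's (which follows Laxmi--Saikia, Lemma 5.1, cited in the proof): the triviality of the $2$-class group of $\Q_{n+1}$ forces $\sigma$ to act as inversion on $A_{n+1}$, so the operators $1\pm\tau$ can be reinterpreted through the third subfield. After that, however, your argument rests on a false ramification claim. The extension $K_{n+1}/K_{n}^{\prime}$ is \emph{not} ramified; it is unramified at every place. Indeed, since $D\equiv 5 \pmod 8$ the $2$-adic prime is inert in $K_n/\Q_n$ and ramified in $\Q_{n+1}/\Q_n$, so its inertia subgroup in $\mathrm{Gal}(K_{n+1}/\Q_n)$ is exactly $\mathrm{Gal}(K_{n+1}/K_n)=\langle\tau\rangle$, which meets $\langle\sigma\tau\rangle$ trivially; for an odd prime dividing $D$ the inertia subgroup lies in $\langle\sigma\rangle$ (such a prime is unramified in $\Q_{n+1}/\Q_n$), again meeting $\langle\sigma\tau\rangle$ trivially; and all fields are totally real. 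Consequently the norm map $A_{n+1}\to A_{n}^{\prime}$ is not surjective (its image has index $2$ by class field theory), contrary to what you assert. This is not a side issue: the factor $\tfrac12$ in the lemma is precisely the uncancelled degree $[K_{n+1}:K_{n}^{\prime}]=2$ in Chevalley's formula for this \emph{unramified} extension. In your setup, the product of ramification indices for $K_{n+1}/K_n$ (namely $2$, one totally ramified $2$-adic prime) cancels the degree $2$ in Chevalley's denominator, and the only other correction you invoke, $\al \kappa^{\prime} \ar \geq 1$, produces nothing, so no factor of $\tfrac12$ can emerge from your bookkeeping.

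The second problem is the choice of operator. Splitting along $1-\tau$ makes the kernel term the \emph{full} group of ambiguous classes $A_{n+1}^{\langle\tau\rangle}$, and Chevalley then yields the index $\left[E(K_n):E(K_n)\cap N_{K_{n+1}/K_{n}}(K_{n+1}^{*})\right]$, which is in general strictly smaller than the index $\left[E(K_n):N_{K_{n+1}/K_{n}}(E(K_{n+1}))\right]$ in the statement; so even after repairing the surjectivity error your route gives a weaker inequality, and the conversion between the two indices --- your self-identified ``decisive step'' --- is left entirely open (it is genuinely nontrivial, since it would have to mix the capitulation kernel of $K_{n+1}/K_{n}^{\prime}$ with unit data of the other extension $K_{n+1}/K_n$). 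The paper sidesteps both issues by splitting along $1+\tau$: the image $A_{n+1}^{\tau+1}$ lands inside the \emph{strongly} ambiguous classes $B_{n+1}^{\langle\tau\rangle}$ of $K_{n+1}/K_n$, because the class $c^{1+\tau}$ contains the $\tau$-invariant ideal $\mathfrak{a}^{1+\tau}$, and the strongly-ambiguous class number formula carries exactly the unit-norm index $\left[E(K_n):N_{K_{n+1}/K_{n}}(E(K_{n+1}))\right]$; meanwhile $\ker(1+\tau)$ equals, via $\sigma=-1$, the ambiguous classes of the unramified extension $K_{n+1}/K_{n}^{\prime}$, which Chevalley bounds by $\tfrac12 \al A_{n}^{\prime}\ar$. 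Swapping your operator $1-\tau$ for $1+\tau$ and replacing ``ramified, norm surjective'' by ``everywhere unramified, whence the $\tfrac12$'' turns your outline into the paper's proof; as written, it does not close.
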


\begin{proof}
The proof is analogous to (\cite{laxmi}, Lemma 5.1) since the same steps can be taken whenever the prime $2$ is inert in the extension $K / \Q$.\\ 

The authors proved that $ \al A_{n+1} \ar \; \leq \frac{1}{2} \cdot \al A_{n}^{\prime} \ar \cdot \al A_{n+1}^{\tau +1} \ar$, where $A_{n+1}^{\tau +1}=\{ [\mathfrak{a}]^{\tau}\cdot [\mathfrak{a}]: [\mathfrak{a}] \in A_{n+1} \}$. The first inequality comes from the fact that $A_{n+1}^{\tau +1}$ is contained in the group $B_{n+1}^{\langle \tau \rangle}$ of classes containing an ideal fixed by the action of $\tau$ (strongly ambiguous classes), and the extension $K_{n+1}/K_{n}$ is totally ramified. Hence, due to the genus formula \cite{genusformula}, the following inequality holds:
\begin{align*}
\al A_{n+1}^{\tau +1} \ar  \leq \al B_{n+1}^{\langle \tau \rangle}\ar = \frac{\al A_{n} \ar}{\left[ E(K_n): N_{K_{n+1}/K_{n}}(E(K_{n+1})) \right]}
\end{align*}

Piecing everything together, the result follows.
\end{proof}
\vspace{0.5cm}

We will try to determine an F.S.U of the field $K_{2}=\Qextbi{\sqrt{2+\sqrt{2}},\sqrt{D}}$ in terms of the units of the intermediate fields in the previous diagram when $n=1$. Using Lemmermeyer's generalization \cite{lemmermeyer} and \Cref{k1hatsecondcase}, the index  $q(K_{2})=[E(K_{2}) : E(K_{1}) \cdot E(K_{1}^{\prime}) \cdot E(\Q_{2})]$ satisfies the equation
\begin{equation} \label{lemmermeyer}
    \al A_{2} \ar = \frac{1}{8} \cdot q(K_{2})  \cdot \al A_{1} \ar \cdot \al A_{1}^{\prime} \ar = \frac{q(K_{2})}{2} \cdot \al A_{1} \ar.
    \end{equation}

First, we notice that if $x$ is $K_{1}=\Qext{\sqrt{2}, \sqrt{D}}$, the norm from $K_{2}$ to any of the other  subfields is equal to $N_{K_{1}/\Q_{1}}(x)$, since the other automorphisms change $\sqrt{D}$. The same argument works if $x$ is in the other subfields. Due to Wada \cite{wada}, the units of $E(K_{2})$ are generated by the elements of $E(K_{1}) \cdot E(K_{1}^{\prime}) \cdot E(\Q_{2})$ that are squares in $K_{2}$ and the units of the intermediate subfields. To see this, if $u \in  E(K_{2})$, then $$N_{K_{2}/K_{1}}(u) \cdot N_{K_{1}/K_{1}^{\prime}}(u) \cdot N_{K_{2}/\Q_{2}}(u)= u^2 \cdot N_{K_{2}/\Q_{1}}(u).$$

The units of  $\Q_{2}$ are determined by the cyclotomic units \cite{washington}
\begin{align*}
    \xi_{a}=\frac{\sin{(\pi a/16)}}{\sin{(\pi/16)}}, \; a \in \{3, 5, 7 \}
\end{align*}

We can see that $\xi_{5}=1+\sqrt{2}+\sqrt{2+\sqrt{2}}$ has norm $\epst$ in $\Q_{1}$ and $\xi_{7}=1+\sqrt{2}+\sqrt{2}\sqrt{2+\sqrt{2}}$ has norm $-1$. Since $\xi_{3} \cdot \xi_{5} \cdot \xi_{7}^{-1} = \epst$, we can choose a F.S.U. $\{w_{1}, w_{2}, \epst\}$ of $\Q_{2}$, such that $N_{\Q_{2}/\Q_{1}}(w_{1})=\epst$ and $N_{\Q_{2}/\Q_{1}}(w_{2})=-1$.
\\

In the extension $K_{1}^{\prime}/\Q_{1}$, as seen in \Cref{k1hatsecondcase}, $\epst$ is not a norm. In particular, if $v$ is a unit with $N_{K_{1}^{\prime}/\Q_{1}}(v)=\pm \epst^n$, then $n$ has to be even, otherwise $v \cdot \epst^{-(n-1)/2}$ has norm $\epst$. Since $\epst$ forms part of the fundamental system of $E(K_{1}^{\prime})$ (it belongs to the three subfields and $\sqrt{\epst} \not \in K_{2}$), the unit $v$ can be replaced by  $v \cdot \epst^{-n/2}$, so that its norm is $\pm 1$. In particular, we can choose a F.S.U. of $E(K_{1}^{\prime})$ of the form $\{v_{1}, v_{2}, \epst \}$, where $N_{K_{1}^{\prime}/\Q_{1}}(v_{1})=1$, and $N_{K_{1}^{\prime}/\Q_{1}}(v_{2})=\pm 1$, depending on whether $-1$ is a norm or not.
\\

Now assume we are in the case of \Cref{doublesymbol-1} where $\sqrt{\eps \epsp} \not \in K_{1}$. Then the fundamental system of $K_{1}$ is $\{ \eps, \epsp, \epst\}$. The F.S.U. of $E(K_{1}^{\prime})$ is of the form $\{v_{1}, v_{2}, \epst \}$, where $N_{K_{1}^{\prime}/\Q_{1}}(v_{1})=1$, and $N_{K_{1}^{\prime}/\Q_{1}}(v_{2})= -1$. The one for $E(\Q_{2})$
 is $\{w_{1}, w_{2}, \epst \}$, where $N_{\Q_{2}/\Q_{1}}(w_{1})=\epst$, and $N_{\Q_{2}/\Q_{1}}(w_{2})=-1$. Choosing the fundamental systems this way, we prove the following result, which gives a new family of examples:

\begin{theorem} \label{doublesymbol-1part2}
    Let $K = \Qext{\sqrt{D}}$ where $D=pq_{1}q_{2}$, such that $p \equiv 5 \mod{8}$, \\ $q_{1}, q_{2} \equiv 3 \mod{8}$, and $\left(\frac{p}{q_{1}}\right) =\left(\frac{p}{q_{2}}\right) = -1$. Furthermore, suppose that $\sqrt{\eps \epsp} \not \in K_{1}$. Then $X_{\infty} \cong A_{1}$.
\end{theorem}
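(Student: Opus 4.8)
The plan is to prove $|A_2| = |A_1|$ and then invoke Fukuda's criterion. Since $D = pq_{1}q_{2} \equiv 5 \pmod 8$, the prime $2$ is inert and unramified in $K/\Q$, so the unique prime above $2$ is totally ramified in the cyclotomic tower $K_{\infty}/K$; that is, $n_{0}=0$. Hence by \Cref{fukuda stability} it suffices to prove $|A_{2}| = |A_{1}|$, and by \eqref{lemmermeyer} (using $|A_{1}^{\prime}| = 4$ from \Cref{k1hatsecondcase}) this is exactly the assertion that the Hasse unit index $q(K_{2})$ equals $2$. For the lower bound, total ramification of $K_{\infty}/K$ makes the norm map $A_{2}\to A_{1}$ surjective, so $|A_{2}|\ge |A_{1}|$, and \eqref{lemmermeyer} then forces $q(K_{2})\ge 2$. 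The whole content is therefore the reverse inequality $q(K_{2})\le 2$.

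For this I would use Wada's description with the fundamental systems fixed just before the statement: setting $V = E(K_{1})E(K_{1}^{\prime})E(\Q_{2})$, one has $q(K_{2}) = [\,V\cap K_{2}^{\times 2} : V^{2}\,]$, the group of products of the chosen fundamental units that become squares in $K_{2}$, taken modulo obvious squares. For $\xi = \alpha\beta\gamma\in V$ (with $\alpha\in E(K_{1})$, $\beta\in E(K_{1}^{\prime})$, $\gamma\in E(\Q_{2})$) to be a square in $K_{2}$ it is necessary that each relative norm $N_{K_{2}/K_{1}}(\xi)$, $N_{K_{2}/K_{1}^{\prime}}(\xi)$, $N_{K_{2}/\Q_{2}}(\xi)$ be a square in the respective subfield. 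The reason for arranging the fundamental systems with prescribed norms down to $\Q_{1}$ is that these three conditions become transparent: using $N_{K_{1}/\Q_{1}}(\eps) = N_{K_{1}/\Q_{1}}(\epsp) = 1$ (both fundamental units have norm $1$, a prime $\equiv 3\bmod 4$ dividing $D$), $N_{K_{1}/\Q_{1}}(\epst) = \epst^{2}$, together with $N_{K_{1}^{\prime}/\Q_{1}}(v_{1}) = 1$, $N_{K_{1}^{\prime}/\Q_{1}}(v_{2}) = -1$, $N_{\Q_{2}/\Q_{1}}(w_{1}) = \epst$, $N_{\Q_{2}/\Q_{1}}(w_{2}) = -1$, and the fact that $\sqrt{\epst}\notin K_{2}$, each of the three norm products must have trivial sign and even $\epst$-exponent. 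These sign and parity constraints force the exponents of $v_{2}$, $w_{1}$, and $w_{2}$ to be even, so every nontrivial class of $V\cap K_{2}^{\times 2}$ modulo $V^{2}$ already lies in $\langle -1, \eps, \epsp, v_{1}, \epst\rangle$.

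To cut this down to a single nontrivial class I would bring in the explicit units of \Cref{doublesymbol-1}. The element $\gamma_{1} = \sqrt{p\eps}\in K_{1}$ exists unconditionally; in the present case, since $\sqrt{\eps\epsp}\notin K_{1}$, the companion relation is $\sqrt{d_{1}\epsp}\in K_{1}$ for a divisor $d_{1}\in\{q_{1},q_{2},pq_{1},pq_{2}\}$ (rather than $d_{1}=p$, which would return the excluded case). Substituting these relations into the remaining square conditions eliminates the classes built from $\eps$ and $\epsp$, leaving at most one genuine square root in $K_{2}$, hence $[\,V\cap K_{2}^{\times 2}:V^{2}\,]\le 2$. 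The main obstacle is precisely this last verification: deciding which of the surviving sign and $\epst$-exponent combinations in $\langle -1, \eps, \epsp, v_{1}, \epst\rangle$ are actually squares in $K_{2}$. This is where the hypothesis $\sqrt{\eps\epsp}\notin K_{1}$ is indispensable — it is exactly what removes the extra $\eps\epsp$-type square root and prevents $q(K_{2})$ from jumping to $4$.

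Once $q(K_{2}) = 2$ is established, \eqref{lemmermeyer} gives $|A_{2}| = |A_{1}|$ and \Cref{fukuda stability} yields $X_{\infty}\cong A_{1}$, completing the proof. As an independent cross-check, one can instead feed $|A_{1}^{\prime}| = 4$ into \Cref{inequality of intermediate fields} with $n=1$ and combine the resulting bound with \eqref{lemmermeyer}; this reduces the entire problem to showing that the unit norm index $[\,E(K_{1}) : N_{K_{2}/K_{1}}(E(K_{2}))\,]$ is at least $2$, i.e.\ that some unit of $K_{1}$ is not the norm of a \emph{unit} from $K_{2}$. I expect this reformulation to require the same explicit unit analysis, since the Hasse norm theorem only detects norms of arbitrary field elements, not of units.
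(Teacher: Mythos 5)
Your setup reproduces the paper's reduction faithfully: $n_{0}=0$, Fukuda's criterion, the Kuroda--Lemmermeyer identity $\al A_{2}\ar = \frac{q(K_{2})}{2}\al A_{1}\ar$ with $\al A_{1}^{\prime}\ar = 4$ from \Cref{k1hatsecondcase}, Wada's description of $E(K_{2})$, and the norm-parity argument forcing the exponents of $v_{2}$, $w_{1}$, $w_{2}$ to be even --- this is exactly how the paper's proof begins. But the actual content of the theorem is the step you leave open. After the reduction, the surviving candidates for squares in $K_{2}$ are (modulo squares) $\eps^{a}\epsp^{b}\epst^{c}v_{1}^{d}$, and what must be shown is that $\eps$ and $\epsp$ cannot both be norms of units from $E(K_{2})$, equivalently that two independent such square roots cannot coexist. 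Your proposed mechanism --- substituting $\sqrt{p\eps}\in K_{1}$ and $\sqrt{d_{1}\epsp}\in K_{1}$ with $d_{1}\neq p$ from \Cref{doublesymbol-1} --- disposes only of $\sqrt{\eps}$, $\sqrt{\epsp}$ and $\sqrt{\eps\epsp}$, because each of these generates the same extension of $K_{2}$ as $\sqrt{d}$ for a proper divisor $d$ of $D$. It says nothing about the combinations involving $\epst$, such as $\sqrt{\eps\epst}$ and $\sqrt{\epsp\epst}$ (or those involving $v_{1}$): these are not of the form $\sqrt{d}$ for any rational $d$, so no divisor or quadratic-symbol substitution can eliminate them. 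You acknowledge this yourself (``the main obstacle is precisely this last verification''), so the proposal is a correct framing plus an unproved core, not a proof.

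The paper closes this gap with an ingredient absent from your outline: the automorphism $\theta\in\mathrm{Gal}(K_{2}/\Q)$ with $\theta\bl\sqrt{2+\sqrt{2}}\br=\sqrt{2-\sqrt{2}}$ and $\theta(\sqrt{D})=\sqrt{D}$, combined with positivity. If $u=\sqrt{\epsp\epst}\in K_{2}$, then
\begin{align*}
(u\cdot\theta(u))^{2} = N_{\kbar/\Q}(\epsp)\cdot N_{\Q_{1}/\Q}(\epst) = 1\cdot(-1) = -1,
\end{align*}
impossible for the real number $u\cdot\theta(u)$; if $u=\sqrt{\eps\epst}\in K_{2}$, then $(u\cdot\theta(u))^{2}=-\eps^{2}<0$, again impossible. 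Here the signs come from $N(\epsp)=1$ (computed in \Cref{doublesymbol-1}) and $N(\epst)=-1$, and the hypothesis $\sqrt{\eps\epsp}\notin K_{1}$ has already been spent on excluding the $\eps\epsp$-type roots. With the $\epst$-combinations ruled out, at most one of $\eps$, $\epsp$ is a norm of a unit, so $\left[E(K_{1}):N_{K_{2}/K_{1}}(E(K_{2}))\right]\geq 2$, and \Cref{inequality of intermediate fields} gives $\al A_{2}\ar\leq\al A_{1}\ar$; Fukuda finishes. Any completion of your plan needs this sign/automorphism argument or an equivalent substitute, so as written the proposal has a genuine gap at precisely the decisive point.
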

\begin{proof}
We will show that $\al A_{2} \ar = \al A_{1} \ar$. From \Cref{k1hatsecondcase}, we must have $\al A_{1}^{\prime} \ar=4$. Thus, using \Cref{inequality of intermediate fields}, we have
\begin{align*}
     \al A_{2} \ar \leq \dfrac{2 \cdot \al A_{1} \ar}{\left[ E(K_1): N_{K_{2}/K_{1}}(E(K_{2})) \right]} \leq 2 \cdot \al A_{1} \ar .
\end{align*}

As mentioned in \Cref{lemmermeyer}, we have $\al A_{2} \ar = \frac{q(K_{2})}{2} \cdot \al A_{1} \ar$, which implies that $q(K_{2})$ is at most $4$; i.e., at most two independent elements of $E(K_{1}) \cdot E(K_{1}^{\prime}) \cdot E(\Q_{2})$ are squares in $K_{2}$.
\\

Suppose $u \in E(K_{2})$ satisfies $u^{2} = \eps ^{a} \; \epsp^{b} \; \epst^{c} \; v_{1}^d \; v_{2}^e \; w_{1}^f \; w_{2}^g .$ Applying the norm to $K_{1}$ on both sides of the equation, we have
\begin{align*}
    N_{K_{2}/K_{1}}(u)^{2} = \eps ^{2a} \; \epsp^{2b} \; \epst^{2c}  \; (-1)^{e} \; \epst^f \; (-1)^g .
\end{align*}

Since $\epst$ is not a square in $K_{1}$, it follows that $e \equiv g \mod{2}$, and $f \equiv 0 \mod{2}$. Hence,
\begin{align*}
    u^{2} = \eps ^{a} \; \epsp^{b} \; \epst^{c} \; v_{1}^d \; v_{2}^e \; w_{2}^e \; t^{2} , \; \text{for some} \; t.
\end{align*}

We can omit $t$, since it can be included inside $u$ and the norm to $K_{1}$ of any square we leave out of the equations always lies in $\Q_{1}$. By taking the norm to $\Q_{2}$, we can see that \begin{align*}
    N_{K_{2}/\Q_{2}}(u)^{2} = \epst^{2c}  \; (-1)^{e} \;  w_{2}^{2e},
\end{align*}
which implies $e \equiv 0 \mod{2}$. Hence, modulo squares, we have
\begin{align*}
     u^{2} = \eps ^{a} \; \epsp^{b} \; \epst^{c} \; v_{1}^d.
\end{align*}

Suppose $N_{K_{2}/K_{1}}(E(K_{2})) = E(K_{1})$, so that both $\eps$ and $\epsp$ are norms from $E(K_{2})$. If $N_{K_{2}/K_{1}}(u) = \epsp$, then we can write
\begin{align*}
     u^{2} = \eps ^{a} \; \epsp^{b} \; \epst^{c} \; v_{1}^d \; t^2,
\end{align*}
where the norm of $t$ is some element $w$ in $\Q_{1}$. Applying the norm to $K_{1}$ on both sides, we get \begin{align*}
     \eps^{2} = \eps ^{2a} \; \epsp^{2b} \; \epst^{2c} \; w^{2},
\end{align*}
from where we conclude that $b = 0$ and $a = 1$. Consequently, we can assume that for some element $u \in E(K_2)$ we have $u^2 = \eps \; \epst^{c} \; v_{1}^{d}$ for some $c, d \in \{0, 1\}$. Similarly, if $N_{K_{2}/K_{1}}(v)=\epsp$, we can assume $v^2=\eps \; \epst^{m} \; v_{1}^{n}$ for some $m, n \in \{0 , 1\}$ and $v \in E(K_2)$. 
\\

First, we notice that we cannot have mixed terms inside the square root since this would give us $3$ independent square roots by taking the product. For example, if $\sqrt{\eps v_{1}}$ and $\sqrt{\epsp v_{1}}$ are in $K_{2}$, then $\sqrt{\eps\epsp} \in K_{2}$. We also recall that $K_{2}(\sqrt{\eps}) = K_{2}(\sqrt{d_{1}})$ for some proper divisor $d_{1}$ of $D$ (\Cref{doublesymbol-1}). Thus, we cannot have $\sqrt{\eps} \in K_{2}$. Similarly, both $\sqrt{\epsp}$ and $\sqrt{\eps\epsp}$ are not in $K_{2}$.
\\

Therefore, either $\sqrt{\eps \epst} \in K_{2}$ or $\sqrt{\epsp \epst} \in K_{2}$. Consider the map $\theta$ in $\text{Gal}(K_{2}/\Q)$ defined by
\begin{align*}
    \theta \bl \sqrt{2+\sqrt{2}} \br =\sqrt{2-\sqrt{2}} \; , \; \theta(\sqrt{D} ) = \sqrt{D}.
\end{align*}

Suppose $\sqrt{\epsp \epst}=u \in K_{2}$. Since $\theta(\sqrt{2})=-\sqrt{2}$, we can deduce that $\theta(\epsp)$ is its conjugate over $\Q$. 
Thus, \begin{align*}
    (u \cdot \theta(u))^2= N_{\kbar / \Q} (\epsp) \cdot N_{\Q_{1} / \Q} (\epst) = -1,
\end{align*}
which is impossible. Similarly, if $\sqrt{\eps \epst}=u \in K_{2}$, then
\begin{align*}
    (u \cdot \theta(u))^2= -(\eps)^{2} < 0.
\end{align*}

Hence, at most one of these elements is a norm and $\left[ E(K_1): N_{K_{2}/K_{1}}(E(K_{2})) \right]>1$, which implies that $\al A_{2} \ar \leq \al A_{1} \ar$ due to \Cref{inequality of intermediate fields}. The result follows for this case using \Cref{fukuda stability}.
\end{proof}

\begin{remarkref}
    The same proof does not work when $\sqrt{\eps \epsp \epst} \in K_{1}$ since the norms to the subfields can be positive (\Cref{doublesymbol-1}), giving no contradiction in the last steps of the previous result. For example, for $D=5 \cdot 3 \cdot 163$ we have $X_{\infty} \cong A_{1}$ and for $D=5 \cdot 3 \cdot 643$ we have $X_{\infty} \cong A_{2}$. There is no clear pattern for when the chain stops but we believe that some conditions can be drawn using the quadratic symbols over a different ring, like it was done at the end of the previous section.
\end{remarkref}

\section{Third Case}

Throughout this section, we assume $K=\Qext{\sqrt{D}}$, where $D=pq$ with $p \equiv 5 \mod 8, \;$ \\ $ q \equiv 1 \mod 8$, and $\left( \dfrac{2}{q} \right)_{4} \neq (-1)^{\frac{q-1}{8}}$. We provide new examples of infinite families of real quadratic fields satisfying Greenberg's conjecture. The following corollary is only mentioned in \cite{mouhib}, so we give a short proof:

\begin{corollary} \label{mohib corollary}
    Suppose $X_{\infty}$ is cyclic and $A_{n_{0}} \neq 0$. Then $X_{\infty} \cong A_{n_{0}}$ if there exists a non-trivial element of $A_{n_{0}}$ which capitulates in $A_{n_{0}+1}$.
\end{corollary}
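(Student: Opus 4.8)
The plan is to reduce everything, via Fukuda's stability theorem (\Cref{fukuda stability}), to the single numerical equality $\al A_{n_{0}+1}\ar=\al A_{n_{0}}\ar$, and then to extract this equality from the capitulation hypothesis together with the cyclicity of $X_{\infty}$. First I would record the structural consequences of total ramification: since $K_{\infty}/K_{n_{0}}$ is totally ramified, the norm maps $N_{n}\colon A_{n+1}\to A_{n}$ are surjective for every $n\geq n_{0}$, so each $A_{n}$ with $n\geq n_{0}$ is a quotient of $X_{\infty}=\varprojlim A_{n}$. In particular $A_{n_{0}}$ and $A_{n_{0}+1}$ are cyclic, being homomorphic images of the cyclic group $X_{\infty}$. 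Because $N_{n_{0}}$ is onto, $\al A_{n_{0}+1}\ar=\al A_{n_{0}}\ar$ is equivalent to $N_{n_{0}}$ being an isomorphism; once this holds, \Cref{fukuda stability} (in the cyclotomic reformulation stated after it) gives $\al A_{n}\ar=\al A_{n_{0}}\ar$ for all $n\geq n_{0}$, whence $X_{\infty}\cong A_{n_{0}}$.

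Next I would feed in the capitulation. Let $\iota\colon A_{n_{0}}\to A_{n_{0}+1}$ be the extension-of-ideals map and let $a\neq 0$ be a class with $\iota(a)=0$. The two basic identities relating $\iota$ to the norm are $N_{n_{0}}\circ\iota=[K_{n_{0}+1}:K_{n_{0}}]=2$ and $\iota\circ N_{n_{0}}=1+\tau$, where $\tau$ generates $\mathrm{Gal}(K_{n_{0}+1}/K_{n_{0}})$. Applying the first to $a$ gives $2a=N_{n_{0}}(\iota(a))=0$, so $a$ has order $2$; since $A_{n_{0}}$ is cyclic it has a unique subgroup of order $2$, and therefore $\ker\iota=\langle a\rangle=A_{n_{0}}[2]$ has order exactly $2$. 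I would also note in passing that this hypothesis rules out the torsion-free case $X_{\infty}\cong\Z_{2}$, in which no nonzero class capitulates, so $X_{\infty}$ is automatically finite here.

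The heart of the argument, and the step I expect to be the main obstacle, is to convert $\al\ker\iota\ar=2$ into injectivity of $N_{n_{0}}$. Here I would exploit the cyclic $\Lambda$-module structure of $X_{\infty}$: writing $\gamma_{0}$ for a topological generator of $\mathrm{Gal}(K_{\infty}/K_{n_{0}})$ and using the identifications $A_{n_{0}}\cong X_{\infty}/(\gamma_{0}-1)X_{\infty}$ and $A_{n_{0}+1}\cong X_{\infty}/(\gamma_{0}^{2}-1)X_{\infty}$, under which $N_{n_{0}}$ is the natural projection and $\iota$ is multiplication by $1+\gamma_{0}$. Capitulation then says that some $b\in X_{\infty}$ with $b\notin(\gamma_{0}-1)X_{\infty}$ satisfies $(1+\gamma_{0})b\in(\gamma_{0}^{2}-1)X_{\infty}$, i.e. $X_{\infty}[\,1+\gamma_{0}\,]\not\subseteq(\gamma_{0}-1)X_{\infty}$. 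The cyclicity of $X_{\infty}$ is exactly what should make this usable: on a finite cyclic $2$-group the subgroup lattice is totally ordered, so this non-containment pins the $\gamma_{0}$-action down tightly, and a Nakayama-type count is meant to force $(\gamma_{0}^{2}-1)X_{\infty}=(\gamma_{0}-1)X_{\infty}$, that is $\ker N_{n_{0}}=0$. The delicate point is to exclude the ``jump'' configuration, in which a class capitulates yet $\al A_{n_{0}+1}\ar>\al A_{n_{0}}\ar$; closing this is where I would lean on the fact that the capitulating class has order $2$ and, if the module-theoretic count proves insufficient, on the genus-theoretic reading of $\al\ker\iota\ar$ through the unit index $[E(K_{n_{0}}):N_{K_{n_{0}+1}/K_{n_{0}}}E(K_{n_{0}+1})]$ coming from the ambiguous class number formula.

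Finally, with $N_{n_{0}}$ shown to be an isomorphism we obtain $\al A_{n_{0}+1}\ar=\al A_{n_{0}}\ar$, and \Cref{fukuda stability} upgrades this to stability of the whole tower, yielding $X_{\infty}\cong A_{n_{0}}$ as claimed.
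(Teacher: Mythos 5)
Your two outer steps coincide with the paper's: the reduction via \Cref{fukuda stability} to the single equality $\al A_{n_{0}+1}\ar=\al A_{n_{0}}\ar$, and the inequality $\al A_{n_{0}}\ar\leq\al A_{n_{0}+1}\ar$ coming from surjectivity of the norm in a totally ramified tower; your observation that $\ker\iota=A_{n_{0}}[2]$ has order exactly $2$ is also correct. But the converse inequality --- that capitulation plus cyclicity forces $\al A_{n_{0}+1}\ar\leq\al A_{n_{0}}\ar$ --- is precisely the step the paper does \emph{not} prove abstractly: it invokes Proposition 2.6 of Mouhib--Movahhedi, which supplies $A_{n_{0}+1}\cong X_{\infty}/p^{r}X_{\infty}$ with $p^{r}=\al A_{n_{0}}\ar$, so that cyclicity of $X_{\infty}$ immediately bounds $\al A_{n_{0}+1}\ar$ by $p^{r}$. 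In your proposal this central step is announced rather than proved (``a Nakayama-type count is meant to force\dots'', ``the delicate point is to exclude the jump configuration\dots'', ``if the module-theoretic count proves insufficient\dots''), so the argument is incomplete exactly at its heart.

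The gap is essential rather than cosmetic, because the ``jump configuration'' you hope to exclude is consistent with every formal property in your setup, so no purely module-theoretic count can eliminate it. Take $X_{\infty}\cong C_{4}$ with $\gamma_{0}$ acting by $-1$: this is a cyclic module generated by one element over $\Z_{2}[[\gamma_{0}-1]]$. With your identifications, $A_{n_{0}}=X_{\infty}/(\gamma_{0}-1)X_{\infty}\cong C_{2}$ and $A_{n_{0}+1}=X_{\infty}/(\gamma_{0}^{2}-1)X_{\infty}=X_{\infty}\cong C_{4}$; the norm is the natural (surjective) projection, and $\iota$ is multiplication by $1+\gamma_{0}=0$, so the non-trivial class of $A_{n_{0}}$ does capitulate and $\ker\iota=A_{n_{0}}[2]$ --- yet $\al A_{n_{0}+1}\ar>\al A_{n_{0}}\ar$ and $X_{\infty}\not\cong A_{n_{0}}$. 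Your genus-theoretic fallback does not detect this either: writing $G=\mathrm{Gal}(K_{n_{0}+1}/K_{n_{0}})$, in this configuration $A_{n_{0}+1}^{G}$ has order $2=\al A_{n_{0}}\ar$, so Chevalley's formula is satisfied with trivial unit index. Excluding this configuration therefore requires arithmetic input about genuine Iwasawa modules, which is exactly the content of the cited Proposition 2.6; a complete proof must either reproduce that arithmetic argument or cite it, as the paper does. Two smaller remarks: your identification $A_{n}\cong X_{\infty}/(\gamma_{0}^{2^{n-n_{0}}}-1)X_{\infty}$ presupposes that a single prime ramifies in $K_{\infty}/K_{n_{0}}$ (in general $A_{n}\cong X_{\infty}/Y_{n}$ with $Y_{n}$ possibly larger than $\omega_{n}X_{\infty}$), and your parenthetical claim that capitulation rules out $X_{\infty}\cong\Z_{2}$ is true but deserves its one-line proof: if $\gamma_{0}$ acted by $-1$ on $\Z_{2}$ then $A_{n_{0}+1}$ would be infinite, and otherwise $1+\gamma_{0}$ is injective on $\Z_{2}$.
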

\begin{proof}
 Using the notation from (\cite{mouhib}, Proposition 2.6), we have $N = n_{0}$, and there exists a non-trivial element of $A_{N}$ which capitulates in $A_{N+1}$. If $|A_{N}|=p^{r}$, then $A_{N+1} \cong X_{\infty}/p^r X_{\infty}.$ Since $A_{N+1}$ is cyclic, we have $\al A_{N+1} \ar \leq \al A_{N}\ar = p^r$. The extension is totally ramified, which implies that $\al A_{N} \ar \leq \al A_{N+1}\ar $. We can conclude that $X_{\infty} \cong A_{N} = A_{n_{0}}$, due to \Cref{fukuda stability}.
\end{proof}

Following the discussion in \cite{mouhib} after Theorem 4.1, we have the following lemma: 

\begin{lemma} \label{capitulationpq}
The following statements hold:
\begin{enumerate}[i-]
    \item Assume that the ideal $Q$ above the prime $q$ in $K$ is not principal.  Then $X_{\infty} \cong A_{0}$.
    \vspace{0.2cm}
    
  \item Assume that the ideal $Q^{\prime}$ above the prime $q$ in $K^{\prime}$ is not principal, and $q \equiv 1 \mod {16}$. Then $X_{\infty} \cong A_{1}$.
\end{enumerate}
\end{lemma}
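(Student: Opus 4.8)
The plan is to invoke the capitulation criterion of \Cref{mohib corollary} together with Fukuda's stability theorem (\Cref{fukuda stability}), using that we are in the third case, so $X_{\infty}$ is cyclic. I would first pin down $n_{0}$. Since $D = pq \equiv 5 \pmod 8$, the prime $2$ is inert in $K/\Q$, and as $\Q_{\infty}/\Q$ is totally ramified at $2$ and linearly disjoint from $K$, the unique prime of $K$ above $2$ is totally ramified in $K_{\infty}/K$. Hence $n_{0}=0$ and the whole tower $K_{\infty}/K$ is totally ramified, so Fukuda applies at every layer $n \geq 0$.

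For (i): the prime $q$ ramifies in $K$, so $Q^{2}=(q)$ and $[Q]$ is a class of order dividing $2$; the hypothesis makes it non-trivial, so $A_{0}\neq 0$. By \Cref{mohib corollary} it suffices to show that $[Q]$ capitulates in $A_{1}$, i.e. that $Q\mathcal{O}_{K_{1}}$ is principal. Because $q \equiv 1 \pmod 8$ gives $\left(\frac2q\right)=1$, the prime $q$ splits in $\Q_{1}=\Q(\sqrt2)$, say $(q)=\mathfrak{q}\bar{\mathfrak{q}}$ with $\mathfrak{q}=(\pi)$ principal (class number one), while $q$ ramifies in $K_{1}/\Q_{1}$; a direct comparison of factorizations gives $q\mathcal{O}_{K_{1}}=\mathcal{Q}_{1}^{2}\mathcal{Q}_{2}^{2}$ with $\mathcal{Q}_{1}^{2}=(\pi)$ and $Q\mathcal{O}_{K_{1}}=\mathcal{Q}_{1}\mathcal{Q}_{2}$. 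Thus $(\mathcal{Q}_{1}\mathcal{Q}_{2})^{2}=(q)$, and capitulation is equivalent to $qu$ being a square in $K_{1}$ for some unit $u\in E(K_{1})$. I would verify this last condition from the standing congruences and the quartic-symbol hypothesis $\left(\frac2q\right)_{4}\neq(-1)^{\frac{q-1}{8}}$, exactly along the lines of the discussion following Theorem~4.1 of \cite{mouhib}; then \Cref{mohib corollary} and \Cref{fukuda stability} give $X_{\infty}\cong A_{0}$.

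For (ii): I would run the same mechanism one layer higher. Lifting $Q^{\prime}$ from $K^{\prime}=\Q(\sqrt{2D})$ to $K_{1}$ yields the same ideal $\mathcal{Q}_{1}\mathcal{Q}_{2}$ over $q$, whose class now sits in $A_{1}$; under the hypothesis that $Q^{\prime}$ is non-principal I would first check that this lifted class is a non-trivial element of $A_{1}$. The congruence $q \equiv 1 \pmod{16}$ is precisely what makes $q$ split in $\Q_{2}=\Q(\sqrt{2+\sqrt2})$, hence in $K_{2}/K_{1}$, and by the argument of part (i) applied to the extension $K_{2}/K_{1}$ this class capitulates in $A_{2}$. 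Invoking the level-shifted form of \Cref{mohib corollary} at $n=1$ (legitimate since the tower is totally ramified and $X_{\infty}$ is cyclic), together with \Cref{fukuda stability}, gives $|A_{2}|=|A_{1}|$ and therefore $X_{\infty}\cong A_{1}$.

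The main obstacle is establishing the two capitulations, i.e. showing that the ideal over $q$ becomes principal exactly one layer up. In both cases this reduces to proving that $q$ times a suitable unit is a square in the relevant field ($K_{1}$ for (i), $K_{2}$ for (ii)), and translating this into the quartic-residue condition on $p$ and $q$ is the delicate step; the role of the two congruences is that $q \equiv 1 \pmod 8$ governs the splitting of $q$ in $\Q_{1}$ while $q \equiv 1 \pmod{16}$ governs its splitting in $\Q_{2}$, which is what shifts the capitulation from level $1$ to level $2$. A secondary point in (ii) is to confirm that the lifted class of $Q^{\prime}$ is genuinely non-trivial in $A_{1}$ and that the stabilization argument of \Cref{mohib corollary} carries over verbatim to the layer $n=1$.
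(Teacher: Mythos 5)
Your skeleton (Fukuda plus the capitulation criterion of \Cref{mohib corollary}, with the splitting of $q$ through the tower driving the capitulation) matches the paper, but both halves have real problems. In (i), the entire content of the lemma is the capitulation of $Q$ in $K_{1}$, and you defer exactly that step to an unspecified quartic-symbol verification ``along the lines of'' Mouhib. The paper proves it outright, and by a different mechanism than the one you sketch: writing $Q\mathcal{O}_{K_{1}}=\mathcal{Q}_{1}\mathcal{Q}_{2}$, each $\mathcal{Q}_{i}^{2}$ is the lift of a prime of $\Q_{1}$ (since the $q$-adic primes ramify in $K_{1}/\Q_{1}$), hence principal because $\Q_{1}$ has class number one; each $\mathcal{Q}_{i}$ is non-principal, since $\mathcal{Q}_{i}$ principal would force $Q=N_{K_{1}/K}(\mathcal{Q}_{i})$ to be principal; and since $A_{1}$ is cyclic it has a unique element of order $2$, so $[\mathcal{Q}_{1}]=[\mathcal{Q}_{2}]$ and $[Q\mathcal{O}_{K_{1}}]=[\mathcal{Q}_{1}]^{2}=1$. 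No quartic-residue computation is needed; the hypothesis $\left(\frac{2}{q}\right)_{4}\neq(-1)^{\frac{q-1}{8}}$ enters only through the cyclicity of the $A_{n}$, which your proposal never invokes.

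In (ii) your plan, as written, fails. You propose to show that the lift of $Q^{\prime}$ to $K_{1}$, namely the product $\mathcal{Q}_{1}\mathcal{Q}_{2}$, is a non-trivial class of $A_{1}$. But by the very argument just described (principal squares descending from $\Q_{1}$, cyclicity of $A_{1}$), that product class is \emph{always} trivial, whether or not $Q^{\prime}$ is principal in $K^{\prime}$; the ``secondary point'' you flag for confirmation is in fact false. The correct object is a \emph{single} prime $Q_{1}^{\prime}$ of $K_{1}$ above $q$: it is non-principal precisely because $Q^{\prime}=N_{K_{1}/K^{\prime}}(Q_{1}^{\prime})$ is non-principal, it splits in $K_{2}/K_{1}$ because $q\equiv 1 \bmod{16}$ makes $q$ split completely in $\Q(\zeta_{16})\supset\Q_{2}$, and it capitulates in $A_{2}$ by the part-(i) mechanism one level up ($\Q_{2}$ has class number one and $A_{2}$ is cyclic). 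Finally, the level shift is justified in the paper not by an ad hoc ``level-shifted form'' of \Cref{mohib corollary} for the $K$-tower, but by applying the corollary to the cyclotomic tower of $K^{\prime}$, whose layers are $K^{\prime},K_{1},K_{2},\dots$ and for which $n_{0}=1$ (the prime above $2$ is inert in $K_{1}/K^{\prime}$), so that $A_{n_{0}}=A_{1}$ and the conclusion $X_{\infty}\cong A_{1}$ is exactly the statement of the corollary.
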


\begin{proof}
    
Since $n_{0}=0$ for $K$, $n_0=1$ for $K^{\prime}$, and all the $2$-groups involved are non-trivial, we can apply the previous corollary with $N=0$ or $N=1$, once we prove that a non-trivial ideal capitulates in the next level.
\\

Let $Q$ denote the prime of $K$ above $q$, and assume that it is not principal. Since $q \equiv 1 \mod{8}$, it splits in $\Q_1=\Q(\sqrt{2})/\Q$ and ramifies in $K/\Q$. Thus, $Q$ splits into two primes $Q_1, Q_2$ in $K_1$. These primes cannot be principal, because they are conjugate by the non-trivial automorphism of $\text{Gal}(K_{1}/K)$, so if $Q_1 = \langle \alpha \rangle$, it follows that $Q = \langle N_{K_1/K}(\alpha) \rangle$ in $K$, which is a contradiction due to our assumption. Given that all the $q$-adic primes ramify in  $K_{1}/\Q_{1}$, it follows that $Q_{i}^{2}$ comes from an ideal of $\Q_{1}$ for $i=1,2$. Since the class group of $\Q_{1}$ is trivial, it follows that $Q_{i}^2$ is principal in $K_{1}$. Any cyclic group has a unique element of order 2, therefore $Q_{1}$ and $Q_{2}$ lie within the same class of $A_{1}$. Hence, $Q = Q_{1} Q_{2}$ is principal in $K_{1}$ and $Q$ capitulates in $K_{1}$. The result then follows for the first statement.
\\ 

Let $Q^{\prime}$ be the prime above $q$ in $K^{\prime}$. Assume $Q^{\prime}$ is not principal and $q \equiv 1 \mod{16}$. Given that $q$ splits in $\Q_{1}$, it follows that $Q^{\prime}$ splits into two non-principal prime ideals $Q_{1}^{\prime}, \; Q_{2}^{\prime}$ of $K_{1}$. The field $\Q_{2}=\Qext{\sqrt{2+\sqrt{2}}}$ is contained in $\Qext{\zeta_{16}}$.
Since $q \equiv 1 \mod{16}$, it splits completely in the extension $\Qext{\zeta_{16}}/\Q$ \cite{washington}. Therefore, both primes $Q_{1}^{\prime}$ and $Q_{2}^{\prime}$ are non-trivial and split in $K_{2}$. Again, all the $q$-adic primes are ramified in $K_{2}/\Q_{2}$ and the group $A_{2}$ is cyclic. Hence, both primes capitulate in $K_{2}$ due to a similar argument. Once again, we can conclude that $X_{\infty} \cong A_{1}$ from the previous result.
\end{proof}

\begin{remarkref}
The case $q \equiv 9 \mod{16}$ turns out to be more complicated since the primes over $q$ in $\Q_{1}$ become inert in $\Q_{2}$. Some work has been done with similar cases using circular units \cite{kumakawa} to analyze the units of the field $K_{1}^{\prime}$ or using Iwasawa theory \cite{fukudasimilarcase}. Still, the modular conditions make it hard to get similar results. We will refer to this case later in this section.
\end{remarkref}

\begin{proposition} \label{principality}
    Let $N$ denote the norm map of the corresponding field to $\Q$. Then the following hold:
    \begin{enumerate} [i-]
        \item If $N(\eps)=-1$, then the ideal above $q$ in $K$ is not principal.
        \item If $N(\epsp)=-1$, then the ideal above $q$ in $K^{\prime}$ is not principal.
    \end{enumerate}
\end{proposition}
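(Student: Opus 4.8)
The plan is to prove both parts by a single contradiction argument that exploits the ramification of $q$ together with the constraint that a fundamental unit of norm $-1$ determines all norm-one units. I will set up part (i) in detail; part (ii) is literally the same argument with $K$, $\eps$ and the nontrivial automorphism $\sigma$ of $\mathrm{Gal}(K/\Q)$ replaced by $K^{\prime}$, $\epsp$ and the nontrivial automorphism of $K^{\prime}/\Q$, since the only facts used are that $q$ ramifies and that $\sqrt{\pm q}\notin K^{\prime}$ (note $\sqrt{q}\in K^{\prime}=\Q(\sqrt{2pq})$ would force $2p$ to be a square).

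First I would record the ramification bookkeeping. Since $p\equiv q\equiv 1\bmod 4$, the discriminant of $K$ is $pq$ and $q$ ramifies, so the prime $Q$ above $q$ satisfies $Q^{2}=\langle q\rangle$ and is fixed by $\sigma$. Arguing by contradiction, suppose $Q=\langle\alpha\rangle$ is principal with $\alpha\in\mathcal{O}_{K}$. Taking ideal norms gives $|N_{K/\Q}(\alpha)|=N(Q)=q$, so $N_{K/\Q}(\alpha)=\pm q$.

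Next I would bring in the Galois action. From $\sigma(Q)=Q$ we get $\langle\sigma(\alpha)\rangle=\langle\alpha\rangle$, hence $\sigma(\alpha)=u\alpha$ for some unit $u\in\mathcal{O}_{K}^{\times}$. Applying $\sigma$ again yields $\sigma(u)u=1$, i.e.\ $N_{K/\Q}(u)=1$. Here is where the hypothesis enters: because $N(\eps)=-1$, every unit of norm $1$ is of the form $\pm\eps^{2m}$, so $u=\pm\eps^{2m}$ for some $m$. On the other hand $N_{K/\Q}(\alpha)=\alpha\,\sigma(\alpha)=u\alpha^{2}$, so $u\alpha^{2}=\pm q$; substituting $u=\pm\eps^{2m}$ gives $(\eps^{m}\alpha)^{2}=\delta q$ with $\delta\in\{\pm1\}$, whence $\sqrt{\delta q}\in K$. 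This is impossible: $\sqrt{q}\notin K=\Q(\sqrt{pq})$, since $\Q(\sqrt{q})=\Q(\sqrt{pq})$ would force $p$ to be a square, and $\sqrt{-q}\notin K$ because $K$ is real. Thus $Q$ is not principal, proving (i), and (ii) follows identically.

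I expect the one genuinely delicate point — and the place where one is tempted to do far more work — to be the step producing $u=\pm\eps^{2m}$ and hence the square-root conclusion. A purely genus-theoretic attack, computing the generic character of $Q$, only delivers the weaker necessary condition $\left(\tfrac{p}{q}\right)=1$ and cannot, on its own, separate the trivial class from the order-two ambiguous class when $\left(\tfrac{p}{q}\right)=1$ (this is precisely the regime of Scholz's case 2.ii and parts of 2.iii where $N(\eps)=-1$). The descent via the conjugate generator sidesteps the fourth-power reciprocity such an approach would otherwise require, and it uses $N(\eps)=-1$ in an essential way: without it $u$ could be an odd power of $\eps$, and the passage to a square inside $K$ would break down.
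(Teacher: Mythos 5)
Your proof is correct and takes essentially the same approach as the paper: both argue by contradiction, using the ramification of $q$ (the paper via $Q^{2}=\langle q\rangle$, you via $\sigma(Q)=Q$ and $\sigma(\alpha)=u\alpha$) together with $N(\eps)=-1$ to force the intervening unit to be $\pm$ a square, hence $\sqrt{\pm q}\in K$, which is impossible. The paper's write-up just squares the generator and takes norms directly, so the exponent-parity step you obtain from $N(u)=1$ appears there as ``$n$ is even''; the content is identical, including the analogous treatment of $K^{\prime}$.
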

\begin{proof}
First, assume that $N(\eps)=-1$. Let $Q$ denote the prime of $K$ above $q$. Assume by contradiction that $Q$ is principal. Then there exists a fractional ideal $\langle x \rangle$ in $K$ such that $Q=\langle x \rangle$. By squaring, we get $q=x^2 \cdot \eps^n$ for some $n$. Applying the norm map on both sides, it follows that $ q^2= N_{K/\Q}(x)^2 (-1)^n$. Hence, $n$ is even and $q=(|N_{K/\Q}(x) | \cdot  \eps^{\frac{n}{2}})^2$ which is a contradiction. It follows that $Q$ is not principal. The second case is analogous.
\end{proof}

Using the classification in \Cref{scholz}, we have the following corollary:

\begin{corollary} \label{pqcase} Suppose $K= \Q (\sqrt{pq})$ with the same conditions as before. Then the following statements hold:
    \begin{enumerate}[i-]
\item Assume $\left( \dfrac{p}{q} \right)=-1$. Then $X_{\infty} \cong A_{0} \cong C_{2}$.  
\smallskip
\item Assume $\left( \dfrac{p}{q} \right)=1$ and $\left( \dfrac{p}{q} \right)_{4}=\left( \dfrac{q}{p} \right)_{4}=-1$. Then $X_{\infty} \cong A_{0} \cong C_{4}$.  
\smallskip
\item Assume $\left( \dfrac{p}{q} \right)=1$ and $\left( \dfrac{p}{q} \right)_{4}=\left( \dfrac{q}{p} \right)_{4}=1$. If the ideal above $q$ in $K$ is not principal, then $X_{\infty} \cong A_0$. In particular, the result holds whenever $N(\eps)=-1$.
\smallskip
\item Assume $\left( \dfrac{p}{q} \right)=1$, $\left( \dfrac{p}{q} \right)_{4}=-\left( \dfrac{q}{p} \right)_{4}$ and $q \equiv 1 \mod {16}$. If the ideal above $q$ in $K^{\prime}$ is not principal, then $X_{\infty} \cong A_1$. In particular, the result holds whenever $N(\epsp)=-1$.
\end{enumerate}
\end{corollary}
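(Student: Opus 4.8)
The plan is to recognize the four statements as precisely the four branches of Scholz's classification (\Cref{scholz}), which is available here because $p \equiv q \equiv 1 \pmod 4$. In each branch Scholz determines the norm of the fundamental unit $\eps$ and, when it matters, the $2$-adic valuation of the class number $h$; I would feed this into \Cref{principality} to force a prime above $q$ to be non-principal, and then into \Cref{capitulationpq} to identify $X_{\infty}$ with $A_0$ or $A_1$. Throughout I would use the standing assumption of this section that $X_{\infty}$ is cyclic.

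For statement (i), $\left(\frac{p}{q}\right)=-1$ is the first branch of \Cref{scholz}, so $N(\eps)=-1$ and $h \equiv 2 \pmod 4$; the congruence shows $|A_0|=2$, hence $A_0 \cong C_2$. Then \Cref{principality} makes the prime above $q$ in $K$ non-principal and \Cref{capitulationpq} gives $X_{\infty} \cong A_0 \cong C_2$. Statement (ii) is branch 2(ii): again $N(\eps)=-1$, while now $h \equiv 4 \pmod 8$ forces $|A_0|=4$. The same two results give $X_{\infty} \cong A_0$, and I would then invoke the cyclicity of $X_{\infty}$ to transport it across this isomorphism and conclude $A_0 \cong C_4$ rather than $C_2 \times C_2$.

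Statements (iii) and (iv) are the remaining branches 2(iii) and 2(i), in which Scholz leaves $N(\eps)$ undetermined or equal to $+1$, so the conclusions are necessarily conditional on a non-principality hypothesis. For (iii) I would simply apply \Cref{capitulationpq} under the assumption that the prime above $q$ in $K$ is non-principal to obtain $X_{\infty} \cong A_0$, and note that $N(\eps)=-1$ forces that hypothesis via \Cref{principality}. For (iv) the additional hypothesis $q \equiv 1 \pmod{16}$ is exactly what is needed to apply the second part of \Cref{capitulationpq}: if the prime above $q$ in $K^{\prime}$ is non-principal then $X_{\infty} \cong A_1$, and again the ``in particular'' clause follows from \Cref{principality} applied to $\epsp$.

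The argument is mostly bookkeeping, matching the four corollary cases to the four branches of \Cref{scholz}. I expect the only genuinely delicate points to be (a) reading the exact order of $A_0$ off the congruences, where one must check that $h \equiv 2 \pmod 4$ and $h \equiv 4 \pmod 8$ give $2$-valuations exactly $1$ and $2$; and (b) the step in case (ii) from $|A_0|=4$ to $A_0 \cong C_4$, which is not automatic and relies essentially on the cyclicity of $X_{\infty}$ carried through the isomorphism $X_{\infty} \cong A_0$ rather than on any fact internal to $K$.
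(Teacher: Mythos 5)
Your proposal is correct and takes essentially the same route as the paper: the paper's proof is literally a one-line citation of \Cref{mohib corollary}, \Cref{capitulationpq}, and \Cref{scholz}, and your case-by-case expansion (including the use of \Cref{principality} for the ``in particular'' clauses, which the paper leaves implicit) is exactly what that citation unpacks to. The only cosmetic difference is in case (ii), where you deduce $A_0 \cong C_4$ by transporting the cyclicity of $X_{\infty}$ through the isomorphism; this is valid under the section's standing hypotheses, though one could alternatively read cyclicity of $A_0$ directly off genus theory, since the discriminant $pq$ has two prime divisors (so the narrow $2$-class group has $2$-rank $1$) and $h = h^{+}$ because $N(\eps) = -1$.
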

\smallskip

\begin{proof}
    This follows from \Cref{mohib corollary}, \Cref{capitulationpq}, and \Cref{scholz}.
\end{proof}    

For the third case in the previous proposition we notice the following:

\begin{proposition} \label{bothprincipal}
    Assume $\left( \dfrac{p}{q} \right)=1$ and $\left( \dfrac{p}{q} \right)_{4}=\left( \dfrac{q}{p} \right)_{4}=1$. Then $N(\epsp)=1$ and the ideal above $q$ is principal in $\kbar$.
\end{proposition}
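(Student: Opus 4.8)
The plan is to establish the two claims in sequence, since $N(\epsp)=1$ should follow from Kaplan's genus-theoretic data and then feed into the principality assertion. First I would recall the setup of Proposition~\ref{caseqequal5}: here $D=pq$ with $p\equiv q\equiv 1\pmod 4$ (indeed $p\equiv 5\pmod 8$, $q\equiv 1\pmod 8$), so $\kbar=\Qext{\sqrt{2D}}$ has discriminant divisible by the three primes $2$, $p$, $q$. The $2$-rank of the narrow class group $\nclassgroup{\kbar}$ is therefore $r-1=2$, so $\nclassgroup{\kbar}$ modulo odd part is $C_2\times C_2$ times a cyclic $2$-power factor. Under the hypothesis $\left(\frac{p}{q}\right)_4=\left(\frac{q}{p}\right)_4=1$, I expect Kaplan's analysis (the relevant Proposition~$A_2$ or $A_3$ in \cite{kaplan}, for the field $\kbar$ associated to discriminant $2pq$) to force the $4$-rank to be positive and to pin down exactly which simple ambiguous forms lie in the principal genus. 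The key step for the norm is to read off from that table whether $\overline{f}=[-1,0,2pq]$ (equivalently the form representing $-1$) sits in the principal genus: if it does \emph{not}, then $-1$ is not a norm from $\kbar$, which is precisely the statement $N(\epsp)=1$.

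The main computation is thus a genus-character bookkeeping for $\kbar$ exactly as in the proof of \Cref{casep5q3} and \Cref{doublesymbol-1}: I would write out the eight simple ambiguous forms for discriminant $2pq$, compute the three generic characters $\left(\frac{m}{2}\right)$ (or the appropriate $\pm$ character at $2$), $\left(\frac{m}{p}\right)$, $\left(\frac{m}{q}\right)$ on each form, and use the biquadratic hypotheses $\left(\frac{p}{q}\right)_4=\left(\frac{q}{p}\right)_4=1$ to evaluate them. The expectation is that under these conditions the principal form and the form representing $pq$ (a norm $+1$ form) are the ones in the principal genus, while the form representing $-1$ is \emph{not}, giving $N(\epsp)=1$. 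This is the cleanest way to match it to \Cref{scholz}, which in this regime ($h^+\equiv 0\pmod 8$) is silent about the norm, so the norm must be extracted directly from the genus theory of $\kbar$ rather than from Scholz's classification of $K$.

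For the second claim, once $N(\epsp)=1$ is in hand I would argue principality of the prime above $q$ in $\kbar$ by the same descent used in \Cref{casep5q3} and \Cref{principality}, but run in the favorable direction. Because $N(\epsp)=1$, the form $[1,0,-2pq]$ can represent $+$ quantities but not $-1$; the genus hypotheses $\left(\frac{p}{q}\right)_4=\left(\frac{q}{p}\right)_4=1$ should place the form $[q,0,-2p]$ (or $[-q,0,2p]$, representing $q$) in the principal genus, so that $q$ itself is represented by the principal form $[1,0,-2pq]$. Then the equation $x^2-2pq\,y^2=q$ has an integral solution; since $q$ ramifies in $\kbar$, $\langle q\rangle=\mathcal{Q}^2$, and unique factorization of the conjugate ideals $\langle x\pm y\sqrt{2pq}\rangle$ against the self-conjugate ramified prime $\mathcal{Q}$ forces $\mathcal{Q}$ to be principal. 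Finally I would check consistency with $N(\epsp)=1$: writing $\mathcal{Q}=\langle z\rangle$ gives $q=z^2\epsp^n$, and applying $N_{\kbar/\Q}$ yields $q^2=N(z)^2$, which is compatible (and, unlike the $N(\epsp)=-1$ case of \Cref{principality}, produces no parity obstruction), so principality is not contradicted.

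The step I expect to be the main obstacle is identifying \emph{which} simple ambiguous form lands in the principal genus of $\kbar$ under the doubly-trivial quartic residue conditions, since in this regime the $4$-rank is larger and the principal genus of $\kbar$ may contain more than two forms, so one cannot simply invoke "the principal genus has exactly two forms implies equivalence to the unit form" as in \Cref{casep5q3}. The delicate point is to confirm that the specific form representing $q$ — and not merely some form in the principal genus — is equivalent to $[1,0,-2pq]$; this requires a careful reading of Kaplan's determination of the $4$-rank and $8$-rank for the discriminant $2pq$ in \cite{kaplan}, together with the observation that representation of $q$ (a ramified prime) by the principal form is exactly the genus-theoretic shadow of principality of $\mathcal{Q}$.
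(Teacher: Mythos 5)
Your proposal correctly locates the crux (deciding equivalences \emph{inside} the principal genus of $\kbar$), but the mechanism you propose cannot resolve it, and your predicted genus table is wrong. Since $p\equiv q\equiv 1\pmod 4$ and the character of $\kbar$ at $2$ is of $\left(\frac{2}{\cdot}\right)$-type here, the form $\overline{f}=[-1,0,2pq]$ \emph{does} lie in the principal genus: as recorded in the paper's proof of \Cref{normequivalence}, the principal genus of $\kbar$ consists exactly of $f$, $\overline{f}$, $h=[q,0,-2p]$ and $\overline{h}=[-q,0,2p]$. (Your other guess is also off: the form representing $pq$ also represents $-2$, and $\left(\frac{-2}{p}\right)=-1$ for $p\equiv 5\pmod 8$, so it is \emph{not} in the principal genus.) Consequently the test ``is $\overline{f}$ in the principal genus?'' is inconclusive, and no bookkeeping of generic characters can ever settle the norm: generic characters are quadratic, hence blind to the quartic hypotheses. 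The quadratic data is compatible with both pairings of these four forms into two classes --- either $f\sim\overline{f}$ (giving $N(\epsp)=-1$ and the ideal above $q$ non-principal) or $f$ equivalent to one of $h,\overline{h}$ (giving $N(\epsp)=1$ and that ideal principal). That this dichotomy is genuinely undecided by genus theory alone is precisely the content of \Cref{normequivalence}, which in the adjacent case can only state it as an equivalence.

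The missing ingredient is Kaplan's Proposition $B_2$, applied under the condition $\left(\frac{2p}{q}\right)_4\neq\left(\frac{q}{2p}\right)_4$. This is where the section's standing hypothesis $\left(\frac{2}{q}\right)_4\neq(-1)^{\frac{q-1}{8}}$ enters: combined with $\left(\frac{p}{q}\right)_4=\left(\frac{q}{p}\right)_4=1$ it gives $\left(\frac{2p}{q}\right)_4=\left(\frac{2}{q}\right)_4\neq(-1)^{\frac{q-1}{8}}=\left(\frac{q}{2p}\right)_4$. Your proposal never uses this standing hypothesis, yet without it the statement is false --- the dichotomy can resolve the other way. Kaplan's result asserts that exactly one of $h$, $\overline{h}$ is equivalent to the unit form, and both conclusions then fall out of this single statement simultaneously: $\overline{f}\not\sim f$, so $N(\epsp)=1$; and $f$ represents $q$ or $2p$, so the ideal above $q$ in $\kbar$ is principal, by the descent argument you describe (which is sound and matches the paper's). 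So the intended logical structure is not ``prove $N(\epsp)=1$ from genus characters, then deduce principality'': both claims are corollaries of one quartic-symbol input, which your blind reconstruction leaves as an acknowledged but unfilled gap.
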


\begin{proof}
    By Kaplan (\cite{kaplan}, §4, Proposition B2), since $\left( \dfrac{2p}{q} \right)_{4} \neq \left( \dfrac{q}{2p} \right)_{4}$, the principal genus contains both $[q, 0, -2p]$ and $[-q, 0, 2p]$, and only one of them is equivalent to the unit. In particular, the form $[-1, 0, D]$ is not equivalent to the unit form and $N(\epsp)=1$. We conclude as before that the ideal over $q$ in $K^{\prime}$ is principal. Thus $q = x^2 \cdot \epsp^n$ for some $n$. Since $\sqrt{q} \not \in K_1$, $n$ must be odd, $K_{1}(\sqrt{\epsp})=K_{1}(\sqrt{q})$.
\end{proof}

We mention some results and consequences in terms of fundamental units and class groups:

\begin{corollary} \label{corol1} Suppose $\left( \dfrac{p}{q} \right)=-1$. Then the 2-class group $A_{0}^{\prime}$ of $K^{\prime}$ is isomorphic to $C_2 \times C_2$. Furthermore, $
\{ \eps, \epsp, \epst \}$ is a fundamental system of units F.S.U. of $K_{1}$.
\end{corollary}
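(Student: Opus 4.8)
The plan is to compute the norms $N(\eps)$ and $N(\epsp)$ and then read off both the structure of $A_{0}^{\prime}$ and the Hasse unit index $q(K_1)$ from Kubota's formula, exploiting that the behaviour of $X_\infty$ is already known. First I would record the class number and norm data at the bottom of the tower. Since $p\equiv q\equiv 1\pmod 4$ and $\left(\frac{p}{q}\right)=-1$, \Cref{scholz} (the case $\left(\frac{p}{q}\right)=-1$) gives $N(\eps)=-1$ and $\classnumber{K}\equiv 2\pmod 4$, so $A_0\cong C_2$. By \Cref{pqcase}(i) we already have $X_\infty\cong A_0\cong C_2$, and since $K_\infty/K$ is totally ramified (so $n_0=0$), all layers satisfy $\al A_n\ar=\al A_0\ar=2$; in particular $\al A_1\ar=2$.

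Next I would establish $N(\epsp)=-1$ by showing that $-1$ is a norm from $\kbar=\Q(\sqrt{2pq})$. By the Hasse norm theorem this is equivalent to $(-1,2pq)_v=1$ at every place $v$ of $\Q$, where $(\,\cdot\,,\,\cdot\,)_v$ is the Hilbert symbol. At $v=\infty$ this holds since $2pq>0$; at $v=p$ and $v=q$ it reduces to $\left(\frac{-1}{p}\right)=\left(\frac{-1}{q}\right)=1$, which follows from $p\equiv q\equiv 1\pmod 4$; and the remaining symbol at $v=2$ is then forced to be $1$ by Hilbert reciprocity. Hence $N(\epsp)=-1$. Since $\kbar/\Q$ ramifies exactly at the three primes $2,p,q$ dividing its discriminant $8pq$, Gauss's genus theory gives narrow $2$-rank $3-1=2$; because $N(\epsp)=-1$ the narrow and ordinary class groups of $\kbar$ coincide, so $A_{0}^{\prime}$ has $2$-rank $2$ and therefore $\al A_{0}^{\prime}\ar\ge 4$.

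Finally I would feed these facts into \Cref{kubotakuroda formula} for the totally real biquadratic field $K_1=\Q(\sqrt 2,\sqrt D)$, whose quadratic subfields are $K$, $\kbar$ and $\Q_1=\Q(\sqrt 2)$. As $\al A(\Q_1)\ar=1$, the formula reads
\[
\al A_1\ar=\frac{1}{4}\,q(K_1)\,\al A_0\ar\,\al A_{0}^{\prime}\ar ,
\]
and substituting $\al A_1\ar=\al A_0\ar=2$ gives $q(K_1)\,\al A_{0}^{\prime}\ar=4$. Since $q(K_1)\ge 1$ and $\al A_{0}^{\prime}\ar\ge 4$, we must have $\al A_{0}^{\prime}\ar=4$ and $q(K_1)=1$. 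Having $2$-rank $2$ together with order $4$ forces $A_{0}^{\prime}\cong C_2\times C_2$, while $q(K_1)=1$ means $E(K_1)=\langle -1,\eps,\epsp,\epst\rangle$, i.e.\ $\{\eps,\epsp,\epst\}$ is a F.S.U.\ of $K_1$. (Consistently, as $N(\eps)=N(\epsp)=N(\epst)=-1$, only the first and seventh possibilities of \Cref{kubotakuroda formula} survive, and $q(K_1)=1$ selects the first.)

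I expect the only genuine obstacle to be the equality $N(\epsp)=-1$: this is precisely the datum not supplied by \Cref{scholz}, and everything else is bookkeeping with the genus $2$-rank and with Kubota's index. An alternative more in the style of the earlier propositions would be to tabulate the generic characters of the eight simple ambiguous forms of $\kbar$ and verify that $\left(\frac{p}{q}\right)=-1$ keeps every form other than $[1,0,-2pq]$ and $\overline{f}=[-1,0,2pq]$ out of the principal genus; this would give $A_{0}^{\prime}\cong C_2\times C_2$ and, since $\overline{f}$ represents $-1$, simultaneously $N(\epsp)=-1$.
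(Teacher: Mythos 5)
Your argument breaks exactly at the step you identify as the crux: the derivation of $N(\epsp)=-1$. The Hasse norm theorem, applied to the cyclic extension $\kbar/\Q$, only shows that $-1$ is the norm of some element of $\kbar^{*}$, i.e.\ that $x^{2}-2pqy^{2}=-1$ has a \emph{rational} solution. That is strictly weaker than $N(\epsp)=-1$, which says $-1$ is the norm of a \emph{unit}, equivalently that the Pell equation has an \emph{integral} solution. The two notions genuinely differ: for $\Q(\sqrt{34})$ every Hilbert symbol $(-1,34)_{v}$ equals $1$ and indeed $\left(\frac{5}{3}\right)^{2}-34\left(\frac{1}{3}\right)^{2}=-1$, yet the fundamental unit $35+6\sqrt{34}$ has norm $+1$. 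In the language of forms, your symbol computation shows only that $\overline{f}=[-1,0,2pq]$ lies in the principal genus of $\kbar$ (forms in the same genus are precisely the rationally equivalent ones), whereas $N(\epsp)=-1$ requires the stronger statement that $\overline{f}$ is properly equivalent to the principal form $f=[1,0,-2pq]$; the obstruction between the two is exactly the $4$-rank of the narrow class group. So ``hence $N(\epsp)=-1$'' does not follow, and with it the inequality $\al \aprime \ar \geq 4$ and the subsequent Kubota bookkeeping are left unsupported.

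The repair is the alternative you sketch in your last sentence, and it is what the paper actually does: invoke Kaplan (\cite{kaplan}, \S 4, Proposition $A_{2}$), whose character table shows that when $\left(\frac{p}{q}\right)=-1$ the only simple ambiguous forms of $\kbar$ in the principal genus are $f$ and $\overline{f}$. But your parenthetical justification there (``since $\overline{f}$ represents $-1$'') is not the right inference: $\overline{f}$ trivially represents $-1$; what you need is that $f$ does, i.e.\ that $\overline{f}\sim f$. This follows because $s=2$ forces $r_{4}=0$, so the narrow $2$-class group of $\kbar$ is elementary abelian; by Gauss's principal genus theorem the principal genus consists of the square classes and hence has trivial $2$-part, so the $2$-torsion class of $\overline{f}$, lying in the principal genus, must be the principal class. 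This yields simultaneously $N(\epsp)=-1$ and $\aprime\cong C_{2}\times C_{2}$ (narrow equals ordinary). Granting that, the rest of your proposal is correct and essentially the paper's: $\al A_{1}\ar=\al A_{0}\ar=2$ from \Cref{scholz} and \Cref{pqcase}, and \Cref{kubotakuroda formula} forces $q(K_{1})\cdot\al \aprime \ar=4$, hence $q(K_{1})=1$ and the F.S.U.\ $\{\eps,\epsp,\epst\}$; your ``backwards'' use of Kubota to pin down $\al \aprime \ar=4$ is only a cosmetic variant of the paper, which reads that order off Kaplan directly and uses Kubota solely to get $q(K_{1})=1$.
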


\begin{proof}
By Kaplan (\cite{kaplan}, §4, Proposition $A_{2}$), we have $N(\epsp)=-1$, as only the unit and its inverse are in the principal genus. Thus, the $4$-rank is $0$ and $A_{0}^{\prime} \cong C_2 \times C_2$, since it must be equal to its narrow class group. From \Cref{kubotakuroda formula} and \Cref{pqcase}, we have $\al A_{1}\ar =\al A_{0} \ar = q(K_{1}) \cdot \al A_0\ar$. Therefore, $q(K_{1})=1$, and the fundamental system is the one indicated.
\end{proof}

\begin{corollary} \label{corol2} Assume $\left( \dfrac{p}{q} \right)=1$. Suppose we are in one of the following cases:
\begin{enumerate}[i-]
    \item $\left( \dfrac{p}{q} \right)_{4}=\left( \dfrac{q}{p} \right)_{4}=-1$.
    \item $\left( \dfrac{p}{q} \right)_{4}=\left( \dfrac{q}{p} \right)_{4}=1$, and the ideal above $q$ in $K$ is not principal.
\end{enumerate}
\smallskip
Then, the 2-class group $A_{0}^{\prime}$ of $K^{\prime}$ is isomorphic to $C_2 \times C_2$ and the norm of the fundamental unit $\epsp$ is $1$. Furthermore, $\{ \eps, \epsp, \epst \}$ is a F.S.U. of $K_{1}$. 
In the second case, if the ideal above $q$ in $K$ is principal, then $\{ \sqrt{\eps \epsp}, \epsp, \epst \}$ is a F.S.U. of $K_{1}$.
\end{corollary}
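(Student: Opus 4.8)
The plan is to prove the three claims in order: the structure $A_{0}^{\prime}\cong C_{2}\times C_{2}$ together with $N(\epsp)=1$, and then the Hasse unit index $q(K_{1})$, which through \Cref{kubotakuroda formula} forces the F.S.U. I would first study $\kbar=\Qext{\sqrt{2pq}}$, whose discriminant $8pq$ is divisible by the three primes $2,p,q$, so by Gauss its narrow $2$-class group has $2$-rank $2$. Reading Kaplan's generic-character tables (\cite{kaplan}, \S4): case (ii) is exactly \Cref{bothprincipal}, whose proof puts four ambiguous forms in the principal genus of $\kbar$ and gives $N(\epsp)=1$, while case (i) is covered by the companion proposition for the pattern $\bl\frac{p}{q}\br_{4}=\bl\frac{q}{p}\br_{4}=-1$, with the same two conclusions. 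Thus the narrow group is $C_{4}\times C_{2}$ (4-rank $1$, 8-rank $0$). Since $N(\epsp)=1$, the ordinary group $A_{0}^{\prime}$ is the quotient of the narrow group by the order-two class $c$ separating narrow from wide equivalence; because the prime discriminants $8,p,q$ dividing $8pq$ are all positive, every genus character vanishes on $c$, so $c$ lies in the principal genus, which equals the subgroup of squares. Being of order $2$, $c$ is then the square of the generator of the $C_{4}$-factor, whence $A_{0}^{\prime}\cong(C_{4}\times C_{2})/\langle c\rangle\cong C_{2}\times C_{2}$.

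For the unit index I would use the automorphism $\theta\in\mathrm{Gal}(K_{1}/\Q)$ fixing $\sqrt{D}$ and sending $\sqrt{2}\mapsto-\sqrt{2}$, as in the proof of \Cref{doublesymbol-1part2}. Then $\theta$ fixes $\eps$, while $\epst\,\theta(\epst)=N(\epst)=-1$ and $\epsp\,\theta(\epsp)=N(\epsp)=1$. For a product $\eta$ of $\eps,\epsp,\epst$ with $\sqrt{\eta}=u\in K_{1}$, the element $u\,\theta(u)$ lies in the totally real field $K_{1}$, so $\bl u\,\theta(u)\br^{2}=\eta\,\theta(\eta)\geq 0$; this is violated by $\eta=\eps\epst$ and by $\eta=\eps\epsp\epst$, each of which inherits the factor $\epst\,\theta(\epst)=-1$, so neither admits a square root in $K_{1}$. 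Moreover $\sqrt{\eps}\notin K_{1}$, since $D\equiv 5\pmod 8$ makes $2$ inert in $K$, so that neither $\eps$ nor $2\eps$ is a square in $K$; and $\sqrt{\epsp}\notin K_{1}$, because the ideal over $q$ is principal in $\kbar$ (\Cref{bothprincipal}), giving $q=x^{2}\epsp^{n}$ with $n$ odd and hence $K_{1}(\sqrt{\epsp})=K_{1}(\sqrt{q})\neq K_{1}$.

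Assembling the pieces: in case (i) we have $N(\eps)=-1$ by \Cref{scholz}, so the only nontrivial products of the three units with norm $1$ are $\epsp,\ \eps\epst,\ \eps\epsp\epst$, all excluded above; hence $q(K_{1})=1$ and \Cref{kubotakuroda formula} gives the F.S.U. $\{\eps,\epsp,\epst\}$. In case (ii) the picture is identical except for one additional norm-$1$ candidate $\eps\epsp$, which arises only when $N(\eps)=1$; here $\sqrt{\eps\epsp}\in K_{1}$ if and only if $K_{1}(\sqrt{\eps})=K_{1}(\sqrt{\epsp})=K_{1}(\sqrt{q})$, i.e. if and only if the ideal over $q$ is principal in $K$. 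Under the standing hypothesis (not principal) this forces $q(K_{1})=1$ and the F.S.U. $\{\eps,\epsp,\epst\}$, whereas if the ideal over $q$ in $K$ is principal then $\sqrt{\eps\epsp}\in K_{1}$, $q(K_{1})=2$, and \Cref{kubotakuroda formula} returns $\{\sqrt{\eps\epsp},\epsp,\epst\}$. I expect the main obstacle to be the first step: reconciling $N(\epsp)=1$ with $A_{0}^{\prime}\cong C_{2}\times C_{2}$ needs both the exact narrow structure $C_{4}\times C_{2}$ (the vanishing of the $8$-rank in each symbol case, taken from Kaplan) and the genus-theoretic fact that the distinguishing class $c$ sits inside the squares.
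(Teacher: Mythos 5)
Your proposal is essentially correct in outline but takes a genuinely different route from the paper, and it has one load-bearing step that is asserted rather than proved. The paper's proof never determines the narrow class group of $\kbar$ up front: it takes $\al A_{1}\ar = \al A_{0}\ar$ (already available from \Cref{pqcase}, i.e.\ from the capitulation and Fukuda-stability machinery), feeds it into \Cref{kubotakuroda formula} to get $q(K_{1})\cdot \al A_{0}^{\prime}\ar = 4$, and lets the $2$-rank bound force $\al A_{0}^{\prime}\ar = 4$ and $q(K_{1})=1$ simultaneously, so the F.S.U.\ statement is immediate; only then does it compute the R\'edei--Reichardt matrix ($4$-rank $1$) and deduce $N(\epsp)=1$ from the fact that the narrow group must exceed the wide one. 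You reverse the logic: pin down the narrow group completely as $C_{4}\times C_{2}$ from Kaplan, quotient by the class $c$ of $[-1,0,2pq]$ (your argument that $c$ lies in the principal genus, hence among the squares, is correct), and then compute $q(K_{1})=1$ by hand via the $\theta$-automorphism, the inertness of $2$, and the principality of the ideal over $q$ in $\kbar$. What your route buys is independence from the Iwasawa-theoretic input: you never use \Cref{pqcase}, and together with \Cref{kubotakuroda formula} your argument would re-derive $\al A_{1}\ar = \al A_{0}\ar$. What the paper's route buys is economy: it only ever needs $2$-rank and $4$-rank information, never the $8$-rank.

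The genuine gap is exactly the step you flag: ``$8$-rank $0$, taken from Kaplan.'' Everything you actually establish (four ambiguous forms in the principal genus, $N(\epsp)=1$) only gives narrow group $\cong C_{2^{a}}\times C_{2}$ with $a\ge 2$; if $a\ge 3$, your quotient argument yields $A_{0}^{\prime}\cong C_{2^{a-1}}\times C_{2}$, so the entire corollary rests on this claim. Moreover, this is precisely where the standing hypothesis of Section 5, $\bl \frac{2}{q}\br_{4}\neq (-1)^{\frac{q-1}{8}}$, must enter, and it appears nowhere in your proof; if it fails (with both quartic symbols of $p,q$ equal to $+1$) the $8$-rank of the narrow group of $\kbar$ is $1$ and the conclusion $A_{0}^{\prime}\cong C_{2}\times C_{2}$ is false. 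Concretely, in both cases (i) and (ii) one has $\bl \frac{p}{q}\br_{4}\bl \frac{q}{p}\br_{4}=1$, hence
$\bl \frac{2p}{q}\br_{4}\bl \frac{q}{2p}\br_{4} = \bl \frac{2}{q}\br_{4}(-1)^{\frac{q-1}{8}} = -1$,
and it is this inequality of quartic symbols --- the hypothesis of Kaplan's Proposition $B_{2}$ quoted in \Cref{bothprincipal} --- that kills the $8$-rank (compare \Cref{lastcase1mod16}, where the paper cites Kaplan for an $8$-rank statement only after matching such a symbol condition). The same computation is what legitimizes your ``companion proposition'' for case (i): \Cref{bothprincipal} is stated only for case (ii), but its proof applies verbatim in case (i) because $\bl \frac{2p}{q}\br_{4}\neq \bl \frac{q}{2p}\br_{4}$ still holds there. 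To complete your argument you must make this symbol verification explicit and either cite the precise $8$-rank statement in Kaplan under that condition or supply the Scholz/R\'edei-type reciprocity argument; as written, the central structural claim rests on an unverified reading of the tables.
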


\begin{proof}
From \Cref{kubotakuroda formula} we have 
\begin{align*}
\al A_1 \ar =\frac{q(K_{1}) \cdot \al A_0 \ar \cdot \al A_{0}^{\prime} \ar}{4}.
\end{align*}

Due to \Cref{pqcase}, $\al A_1\ar =\al A_0 \ar$. Therefore,  $q(K_{1}) \cdot \al A_{0}^{\prime} \ar = 4$, which implies $\al A_{0}^{\prime} \ar  \leq 4$. Furthermore, the $2$-rank of this group is $2$, from where it follows that $A_{0}^{\prime} \cong C_{2} \times C_{2}$, $q(K_{1})=1$, and the fundamental system is the one stated. The R\`edei-Reichardt matrix \cite{redei} for $K^{\prime}$ is

   \begin{center}
 $\begin{pmatrix}
1 & 1 & 0\\
1 & 1 & 0 \\
0 & 0 & 0 \\
\end{pmatrix}$.
\end{center}

Hence, the $4$-rank of the narrow class group of $K^{\prime}$ is $3-1-1=1$ \cite{redei}. Since the narrow class number differs from the strict class number, the fundamental unit must have norm equal to $1$. Finally, if the ideal above $q$ is principal in $K$, it follows (\Cref{bothprincipal}) that $K_{1}(\sqrt{\eps})=K_{1}(\sqrt{q})=K_{1}(\sqrt{\epsp})$ and $\sqrt{\eps \epsp} \in K_{1}$.
\end{proof}

For the last case in \Cref{pqcase}, we have the following results:

\begin{proposition}\label{normequivalence}
Suppose $\left( \dfrac{p}{q} \right)=1$ and $\left( \dfrac{p}{q} \right)_{4}=-\left( \dfrac{q}{p} \right)_{4}$. Then the ideal over $q$ is not principal in $K^{\prime}$ if and only if $N(\epsp)=-1$.
\end{proposition}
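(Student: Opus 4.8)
The plan is to prove the two implications separately, since one of them is already available. The implication ``$N(\epsp)=-1 \Rightarrow Q^{\prime}$ not principal'' is exactly \Cref{principality}(ii), so it remains to establish the converse, that is (the contrapositive of the forward direction) that $N(\epsp)=1$ forces the prime $Q^{\prime}$ above $q$ in $K^{\prime}=\Q(\sqrt{2pq})$ to be principal. Here $Q^{\prime}$ is ramified with $\langle q \rangle = Q^{\prime 2}$, so $Q^{\prime}$ is principal in the ordinary class group if and only if $\pm q$ is a norm from $K^{\prime}$, equivalently one of the simple ambiguous forms $[q,0,-2p]$ or $[-q,0,2p]$ is properly equivalent to the unit form $[1,0,-2pq]$. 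I would reduce the whole statement to this representability question, exactly as in the proofs of \Cref{casep5q3} and \Cref{bothprincipal}.

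Next I would carry out the genus-theoretic computation for $K^{\prime}$, following Kaplan's analysis in \S4 of \cite{kaplan}. The eight simple ambiguous forms are $[\pm 1,0,\mp 2pq]$, $[\pm 2,0,\mp pq]$, $[\pm p,0,\mp 2q]$, $[\pm q,0,\mp 2p]$, and the three generic characters are $\left( \frac{\cdot}{p} \right)$, $\left( \frac{\cdot}{q} \right)$ and the character $\chi_{8}=\left( \frac{2}{\cdot} \right)$ at the prime $2$ (the prime discriminant at $2$ being $+8$, since $8pq = 8\cdot p \cdot q$ as $p\equiv q \equiv 1 \pmod 4$). Using $p \equiv 5 \pmod 8$, $q \equiv 1 \pmod 8$ and $\left( \frac{p}{q} \right) = \left( \frac{q}{p} \right) = 1$, a direct evaluation shows that $[2,0,-pq]$ and $[p,0,-2q]$ fail the character $\left( \frac{\cdot}{p} \right)$ (because $\left( \frac{2}{p} \right)=-1$), while the four forms $[1,0,-2pq]$, $[-1,0,2pq]$, $[q,0,-2p]$, $[-q,0,2p]$ have all three generic characters equal to $1$ (note $\pm q \equiv \pm 1 \pmod 8$, so both pass $\chi_{8}$). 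Hence the principal genus consists exactly of these four forms, so $s=4$ and the $4$-rank of the narrow class group of $K^{\prime}$ is $1$; in particular the principal genus contains precisely two ambiguous classes, each represented by two simple forms.

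Finally I would run the case analysis on $N(\epsp)$. The form $[-1,0,2pq]$ represents $-1$, so it is equivalent to the unit form precisely when $-1$ is a norm from $K^{\prime}$, that is, when $N(\epsp)=-1$. Assuming $N(\epsp)=1$, the classes of $[1,0,-2pq]$ and $[-1,0,2pq]$ are distinct, and by the previous paragraph they are the only two ambiguous classes of the principal genus. Since $[-q,0,2p]$ is the Gauss composite of $[q,0,-2p]$ with the order-two class of $[-1,0,2pq]$, the forms $[q,0,-2p]$ and $[-q,0,2p]$ lie in the two distinct ambiguous classes, so exactly one of them is equivalent to the unit form $[1,0,-2pq]$. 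Reasoning as in \Cref{casep5q3}, this yields $\alpha \in \mathcal{O}_{K^{\prime}}$ with $N_{K^{\prime}/\Q}(\alpha)=\pm q$, whence $Q^{\prime}=\langle \alpha \rangle$ is principal. Combined with \Cref{principality}(ii), this gives the stated equivalence.

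The main obstacle I anticipate is bookkeeping the prime-$2$ generic character correctly (pinning down that it is $\chi_{8}$ and that both $q$ and $-q$ pass it), and, more importantly, keeping straight the distinction between proper (narrow) equivalence and ordinary principality: the entire dichotomy is governed by whether the class of $[-1,0,2pq]$ is trivial, which is exactly the condition $N(\epsp)=-1$. Everything else is a routine Legendre-symbol evaluation together with Kaplan's $4$-rank count.
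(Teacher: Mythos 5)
Your proposal is correct and follows essentially the same route as the paper: one direction via \Cref{principality}, and the other via genus theory of $K^{\prime}$, identifying the principal genus as $\{[1,0,-2pq],\,[-1,0,2pq],\,[q,0,-2p],\,[-q,0,2p]\}$ and concluding by the pairing of ambiguous forms into classes. The only differences are cosmetic: you argue the contrapositive ($N(\epsp)=1\Rightarrow Q^{\prime}$ principal) and compute the character table yourself (plus the composition argument for $[q,0,-2p]\not\sim[-q,0,2p]$), whereas the paper assumes non-principality, cites Kaplan's Proposition $B_{2}$ for the principal genus, and uses the fact that each ambiguous class has exactly two simple representatives to force $[1,0,-2pq]\sim[-1,0,2pq]$.
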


\begin{proof}
    One of the implications was already discussed in \Cref{principality}. Assume that the ideal over $q$ is not principal in $K^{\prime}$. From Kaplan (\cite{kaplan}, Proposition $B_{2}$), the forms in the principal genus are $f=[1, 0, -pq]$, $\overline{f}=[-1, 0, pq]$, $h=[q, 0, -2p]$ and $\overline{h}=[-q, 0, 2p]$. We cannot have $f$ equivalent to $h$ or $\overline{h}$, otherwise $q$ would be principal in $K^{\prime}$. Therefore, $f$ is equivalent to $\overline{f}$ and $f$ represents $-1$, which completes the result. 
\end{proof}

\begin{lemma} \label{lastcase1mod16}
Suppose $\left( \dfrac{p}{q} \right)=1$ and $\left( \dfrac{p}{q} \right)_{4}=-\left( \dfrac{q}{p} \right)_{4}$. If $q \equiv 1 \mod{16}$, then the following statements hold:
\begin{enumerate} [i-]
    \item
        If $N(\epsp)=-1$, the fundamental system of units of $K_{1}$ is $\{ \eps, \epsp, \epst \}$ and we get \\ $\al A_{1} \ar =\frac{1}{2} \cdot \al A_{0}^{\prime} \ar .$ Moreover, if $\left( \dfrac{2p}{q} \right)_{4}=-1$, then $N(\epsp)=-1$ and $\al A_{1} \ar=4$.
        
    \smallskip

\item If $N(\epsp)=1$, then $\{ \sqrt{\eps \epsp}, \eps, \epst \}$ is a F.S.U. of $K_{1}$. Hence, $q(K_{1})=2$ in this case.

\end{enumerate}

\end{lemma}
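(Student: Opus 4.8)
The plan is to reduce the whole statement to a computation of the Hasse unit index $q(K_{1})$ through \Cref{kubotakuroda formula}. Since $\Q_{1}$ has trivial class group, and \Cref{scholz} gives both $N(\eps)=1$ and (from $2h\equiv 4\mod 8$, so $h\equiv 2\mod 4$) $\al A_{0}\ar=2$ throughout this case, Kubota's formula collapses to
\begin{align*}
    \al A_{1}\ar=\frac{q(K_{1})}{4}\cdot\al A_{0}\ar\cdot\al A_{0}^{\prime}\ar=\frac{q(K_{1})}{2}\cdot\al A_{0}^{\prime}\ar .
\end{align*}
So it suffices to decide, in each subcase, which products $\eps^{a}\epsp^{b}\epst^{c}$ with $a,b,c\in\{0,1\}$ become squares in $K_{1}$. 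The main tool is norm descent, exactly as in \Cref{doublesymbol-1part2}: from a relation $\eta^{2}=\eps^{a}\epsp^{b}\epst^{c}$ in $K_{1}$ I would apply $N_{K_{1}/K}$, $N_{K_{1}/\kbar}$ and $N_{K_{1}/\Q_{1}}$, each of which sends two of $\eps,\epsp,\epst$ to their quadratic norm $\pm1$ and the third to its square; demanding that each image be a square in the relevant subfield pins down the admissible exponents.

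For part (i), where $N(\epsp)=-1$, the norms are $N(\eps)=1$, $N(\epsp)=N(\epst)=-1$. Then $N_{K_{1}/\Q_{1}}$ forces $b$ even and $N_{K_{1}/\kbar}$ forces $c$ even, so the only candidate relation is $\sqrt{\eps}\in K_{1}$. To exclude it I would reuse the factorization of $\eps=x+y\sqrt{D}$ from \Cref{doublesymbol-1} (after replacing $\eps$ by $\eps^{3}$ so that $x$ is odd and $y$ even): writing $x+1=2d_{1}y_{1}^{2}$, $x-1=2d_{2}y_{2}^{2}$ with $d_{1}d_{2}=D$ gives $K_{1}(\sqrt{\eps})=K_{1}(\sqrt{d_{1}})$, while $d_{1}=1$ or $d_{1}=D$ would display $\eps$ as a square and contradict its being fundamental. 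Hence $d_{1}\in\{p,q\}$ and $\sqrt{\eps}\notin K_{1}$, so $q(K_{1})=1$, the F.S.U. is $\{\eps,\epsp,\epst\}$, and the displayed identity yields $\al A_{1}\ar=\tfrac12\,\al A_{0}^{\prime}\ar$.

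For part (ii), where $N(\epsp)=1$, the norms are $N(\eps)=N(\epsp)=1$, $N(\epst)=-1$; now only $N_{K_{1}/\kbar}$ (and $N_{K_{1}/K}$) forces $c=0$, while $b$ is unconstrained, so the candidates are $\sqrt{\eps}$, $\sqrt{\epsp}$ and $\sqrt{\eps\epsp}$. The factorization of $\eps$ gives $K_{1}(\sqrt{\eps})=K_{1}(\sqrt{p})=K_{1}(\sqrt{q})\neq K_{1}$, so $\sqrt{\eps}\notin K_{1}$; and since $N(\epsp)=1$, \Cref{normequivalence} makes the prime above $q$ principal in $\kbar$, whence $K_{1}(\sqrt{\epsp})=K_{1}(\sqrt{q})\neq K_{1}$ and $\sqrt{\epsp}\notin K_{1}$. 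These two quadratic extensions therefore coincide, $K_{1}(\sqrt{\eps})=K_{1}(\sqrt{\epsp})$, forcing $\sqrt{\eps\epsp}\in K_{1}$. As this is the unique nontrivial square, $q(K_{1})=2$ and $\{\sqrt{\eps\epsp},\eps,\epst\}$ is the F.S.U. (the same system as the fourth possibility in \Cref{kubotakuroda formula}, since $N(\eps)=N(\epsp)=1$).

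It remains to treat the sharper assertion in (i): that $\bl\frac{2p}{q}\br_{4}=-1$ forces $N(\epsp)=-1$ and $\al A_{1}\ar=4$. By \Cref{normequivalence} the value of $N(\epsp)$ is governed by whether the prime above $q$ is principal in $\kbar$, so I would first translate the symbol condition into genus data for $\kbar=\Qext{\sqrt{2pq}}$, showing non-principality and hence $N(\epsp)=-1$. The order $\al A_{0}^{\prime}\ar=8$ then follows from the narrow $2$-rank being $2$ (three ramified primes) together with Kaplan's determination of the $4$-rank and $8$-rank of $\Qext{\sqrt{2pq}}$, which under $\bl\frac{2p}{q}\br_{4}=-1$ and the standing hypothesis $q\equiv1\mod 16$ yield $A_{0}^{\prime}\cong C_{2}\times C_{4}$; combined with $\al A_{1}\ar=\tfrac12\,\al A_{0}^{\prime}\ar$ this gives $\al A_{1}\ar=4$. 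I expect this final step to be the main obstacle, as it is the only one not settled by the elementary unit bookkeeping above and instead requires importing Kaplan's precise $8$-rank computation (equivalently, a R\`edei--Reichardt analysis refined by biquadratic residue symbols, in the spirit of \Cref{bothprincipal}) for the field $\kbar$.
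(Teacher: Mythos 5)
Your proposal is correct and follows essentially the same route as the paper's proof: reduce everything to the Hasse index via \Cref{kubotakuroda formula} with $\al A_{0}\ar=2$, exclude $\sqrt{\eps}$ from $K_{1}$ to settle case (i), use \Cref{normequivalence} (principality of the ideal above $q$ in $\kbar$ when $N(\epsp)=1$) to force $K_{1}(\sqrt{\eps})=K_{1}(\sqrt{\epsp})$ and hence $\sqrt{\eps\epsp}\in K_{1}$ in case (ii), and import Kaplan's $4$-rank and $8$-rank determination for $\Qext{\sqrt{2pq}}$ to get $N(\epsp)=-1$ and $A_{0}^{\prime}\cong C_{4}\times C_{2}$ for the sharper assertion in (i). The only cosmetic differences are that you exclude $\sqrt{\eps}$ via the $(x-1)(x+1)=Dy^{2}$ factorization of \Cref{doublesymbol-1}, where the paper instead cites Scholz's statement that one of the ideals above $p$ or $q$ is principal in $K$, and that you re-derive Kubota's norm constraints by explicit norm descent rather than quoting them.
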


\begin{proof}
For the field $K$, the norm of $\eps$ is 1 and one of the ideals above $p$ or $q$ in $K$ is principal (\cite{scholz}, \textbf{cf.} \cite{kaplandiv}). Thus, we have $K_{1}(\sqrt{p})=K_{1}(\sqrt{q})=K(\sqrt{\eps})$. In particular, $\sqrt{\eps} \not \in K_1$. \\

If $N(\epsp)=-1$, the only possible system is $\{ \eps, \epsp, \epst \}$. Therefore $q(K_{1})=1$, and since $\al A_0 \ar=2$, we get $ \al A_1 \ar=\frac{1}{2} \cdot \al A_{0}^{\prime} \ar.$ In the case when $\left( \dfrac{2p}{q} \right)_{4}=-1$, due to (\cite{kaplan}, §4, Proposition $A_{2}$), the $8$-rank of the narrow class group of $K^{\prime}$ is $0$ and $N(\epsp)=-1$. Since the $4$-rank of the narrow class group is $1$ and the $2$-rank is $2$, we have $A_{0}^{\prime} \cong C_4 \times C_2$.
\\

Finally, if $N(\epsp)=1$ we must have $K_{1}(\sqrt{\epsp}) = K_{1}(\sqrt{q}) = K_{1}(\sqrt{\eps})$ due to \Cref{normequivalence}.
\end{proof}

We prove a result similar to \Cref{k1hatsecondcase}, that does not need any extra conditions on the quadratic or biquadratic symbols, that will  give us an idea of what we can do next:

\begin{lemma} \label{fieldK1hat}
    Suppose $p \equiv 5 \mod{8}$ and $q \equiv 1 \mod{8}$ with $\left( \frac{2}{q} \right)_{4} \neq (-1)^{\frac{q-1}{8}}$. Then the 2-class group of $K_{1}^{\prime}=\Qext{\sqrt{(2+\sqrt{2})pq}}$ is isomorphic to $ C_2 \times C_2$. In particular, $\al A_{1}^{\prime} \ar=4$.
\end{lemma}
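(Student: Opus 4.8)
The plan is to mirror the structure of the proof of \Cref{k1hatsecondcase}: first establish that the $2$-rank of $A_{1}^{\prime}$ equals $2$ via the genus formula, then upgrade this to the precise isomorphism $A_{1}^{\prime} \cong C_{2} \times C_{2}$ by using the cyclicity of the groups $A_{n}$ together with a class-field-theoretic argument that forbids any larger cyclic factor. Since $K_{1}^{\prime} = \Q_{1}(\sqrt{D\epst\sqrt{2}})$ with $D = pq$, I would work inside the extension $K_{1}^{\prime}/\Q_{1}$, where the prime $2$ is inert in $K/\Q$ and the relevant units of $\Q_{1}$ are generated by $\epst$.

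First I would count the ramified primes in $K_{1}^{\prime}/\Q_{1}$ and determine the norm unit index $e = [E(\Q_{1}) : N_{K_{1}^{\prime}/\Q_{1}}(K_{1}^{\prime\,*})]$. Here $p \equiv 5 \bmod 8$ is inert in $\Q_{1}$ while $q \equiv 1 \bmod 8$ splits, so the divisors of $D$ together with the prime over $2$ contribute the ramified primes; I expect a total of four ramified primes (the prime over $p$, the two primes over $q$, and the ramified prime over $2$ coming from the factor $\epst\sqrt{2}$). The key input is showing $\epst$ is not a norm from $K_{1}^{\prime}$, which I would obtain by a Hilbert-symbol computation as in \Cref{k1hatsecondcase}: using $\Q_{2} = \Q_{1}(\epst\sqrt{2})$ and the local symbol $\bl \frac{\epst, D\epst\sqrt{2}}{\mathcal{L}} \br$ at a suitable prime $\mathcal{L}$ of $\Q_{1}$, with the condition $\left( \frac{2}{q} \right)_{4} \neq (-1)^{\frac{q-1}{8}}$ entering to force the symbol to equal $-1$. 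Granting $e = 2$, the genus formula yields $r_{2}(A_{1}^{\prime}) = 4 - 1 - 1 = 2$; I would confirm the lower bound by exhibiting an unramified biquadratic extension of $K_{1}^{\prime}$, namely $K_{1}^{\prime}(\sqrt{p}, \sqrt{q})$, analogous to the second case.

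Next I would promote the rank statement to $A_{1}^{\prime} \cong C_{2}\times C_{2}$ by ruling out factors $C_{2^{m}}$ with $m \geq 2$. This is exactly the cyclicity argument from \Cref{k1hatsecondcase}: writing $M = K_{1}^{\prime}(\sqrt{p},\sqrt{q})$ with its three quadratic intermediate fields over $K_{1}^{\prime}$, one of which is $K_{2}$, I would use that $A_{2} = A(K_{2})$ is cyclic (the groups $A_{n}$ are cyclic throughout this third case) so that $K_{2}$ admits a unique unramified abelian extension of each order. If $A_{1}^{\prime}$ had a factor $C_{2^{n}}$ with $n \geq 2$, the $2$-class field $H$ of $K_{1}^{\prime}$ would produce two distinct unramified abelian extensions containing $K_{2}$ of the same degree, a contradiction. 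Intersecting the three index-two subgroups $\mathrm{Gal}(H/M_{i})$ of $A_{1}^{\prime}$ then forces $\mathrm{Gal}(H/M)$ to be trivial, whence $H = M$ and $A_{1}^{\prime} \cong \mathrm{Gal}(M/K_{1}^{\prime}) \cong C_{2}\times C_{2}$, giving $\al A_{1}^{\prime}\ar = 4$.

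The main obstacle I anticipate is the Hilbert-symbol computation establishing that $\epst$ is not a norm from $K_{1}^{\prime}$, since here $q \equiv 1 \bmod 8$ splits in $\Q_{1}$ (unlike the inert prime $q_{1} \equiv 3 \bmod 8$ of the second case), so the splitting type at the $q$-adic primes changes the ramification count and the local symbols must be evaluated at two primes above $q$. The biquadratic condition $\left( \frac{2}{q} \right)_{4} \neq (-1)^{\frac{q-1}{8}}$ is precisely what is needed to pin down these local symbols, and getting it to feed correctly into the product formula is the delicate step; the cyclicity argument in the second half should then transfer verbatim from \Cref{k1hatsecondcase} once the rank is in hand.
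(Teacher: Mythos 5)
Your proposal follows the paper's proof essentially step for step: the genus formula for $K_{1}^{\prime}/\Q_{1}$ with four ramified primes (one over $p$, two over $q$, one over $2$), the Hilbert-symbol computation at a prime above $q$ where the condition $\left(\frac{2}{q}\right)_{4} \neq (-1)^{\frac{q-1}{8}}$ forces $\epst$ not to be a norm, the lower bound from the unramified biquadratic extension $K_{1}^{\prime}(\sqrt{p},\sqrt{q})$, and the transfer of the cyclicity argument from \Cref{k1hatsecondcase} with $M = K_{1}^{\prime}(\sqrt{p},\sqrt{q})$. This is exactly the paper's route (the paper likewise concludes by declaring the remainder ``analogous to \Cref{k1hatsecondcase}''), so the proposal is correct and essentially identical in approach.
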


\begin{proof}
    We first show that the 2-rank of $A_{1}^{\prime}$ is $2$. Consider the extension $K_{1}^{\prime} / \Q_{1}$. Since there are $4$ ramifying primes in this extension (2 over $q$, 1 over $p$ and 1 over $2$), we have:
    \begin{align*}
        r_{2}(A_{1}^{\prime})=3-e(K_{1}^{\prime} / \Q_1)
    \end{align*}
where $e(K_{1}^{\prime} / \Q_1)$ is the unit index, as mentioned before. Since the extension $K_{1}^{\prime} (\sqrt{p}, \sqrt{q})$ is unramified and biquadratic over $K_{1}^{\prime}$, it follows that the $2$-rank of the class group is at least $2$.
\\

If $\mathcal{L}$ is not a 2-adic prime of $\Q_1$, then the extension $\Q_2 /\Q_1$ is unramified at $\mathcal{L}$. Since $\Q_{2}=\Q_{1}(\epst \sqrt{2})$, the Hilbert symbol satisfies
\begin{align*}
    \left( \dfrac{\epst, \epst \sqrt{2}}{L} \right)=1.
\end{align*}

Let $\mathcal{Q}$ be an ideal over $q$ in $Q_{2}$. Then as discussed in \cite{azizi2001}, 
\begin{align*}
    \left( \dfrac{\epst,q}{\mathcal{Q}} \right) = \left( \frac{2}{q} \right)_{4} \cdot  \biggl( \frac{q}{2} \biggr)_{4}=-1.
\end{align*}
Thus,
\begin{align*}
    \left( \dfrac{\epst,pq\epst \sqrt{2}}{\mathcal{Q}} \right)=-1
\end{align*}

Hence, we know that $\epst$ is not a norm from $K_{1}^{\prime}$. It follows that $e(K_{1}^{\prime}/\Q_1)=1$ and the $2$-rank of $A_{1}^{\prime}$ is exactly $2$. From here the proof is analogous to \Cref{k1hatsecondcase}, by simply considering the intermediate fields  of the extension $M=K_{1}^{\prime} (\sqrt{p}, \sqrt{q})= Q(\sqrt{2+\sqrt{2} }, \sqrt{p}, \sqrt{q}) $ over $K_{1}^{\prime}$.
\end{proof}

\vspace{0.2cm}

We write down some final remarks on the cases that are left in (\cite{mouhib}, Theorem 3.8). One of the reasons it seems hard to understand what happens in some of the cases is that we cannot replicate what was done in \Cref{doublesymbol-1part2}. For instance, in \Cref{doublesymbol-1}, whenever $\sqrt{\eps \epsp \epst} \in K_{1}$, the norm taken to the subfields can be positive or negative, and the same argument cannot be applied unless we have a real understanding of the units of $K_{2}$. The behavior is comparable to what happens in some of the cases \Cref{caseqequal5}, where the module is known to be finite using Iwasawa theory, but it is not easy to predict where the chain stops growing. For one of the fields left out in the third case, we conjecture that the following statement is true:
\begin{conj}
    Let $K=\Qext{\sqrt{D}}$, where $D=pq$ with $p \equiv 5 \pmod 8, \; q \equiv 1 \pmod 8$, and $\left( \dfrac{2}{q} \right)_{4} \neq (-1)^{\frac{q-1}{8}}$. Assume that $\left( \dfrac{p}{q} \right)=1$, $\left( \dfrac{p}{q} \right)_{4}=-\left( \dfrac{q}{p} \right)_{4}, \; q \equiv 9 \mod {16}$, and $N(\epsp)=-1$. Then $X_{\infty} \cong A_{1}$.
\end{conj}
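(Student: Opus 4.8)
The plan is to adapt the unit-index argument of \Cref{doublesymbol-1part2} to the field $K_2 = \Qextbi{\sqrt{2+\sqrt 2}, \sqrt D}$, viewed as a biquadratic extension of $\Q_1$ with quadratic subfields $\Q_2$, $K_1$ and $K_1^{\prime}$. First I would collect the background data that does not depend on $q \bmod 16$: by \Cref{scholz} (case $2$-i) we have $N(\eps) = 1$ and $\al A_0 \ar = 2$; by \Cref{fieldK1hat} the group $A_{1}^{\prime}$ is isomorphic to $C_2 \times C_2$, so $\al A_{1}^{\prime} \ar = 4$; and, exactly as in the first part of the proof of \Cref{lastcase1mod16} (which uses only \Cref{scholz} and the norm conditions, not $q \bmod 16$), the hypothesis $N(\epsp) = -1$ forces $\{\eps, \epsp, \epst\}$ to be a F.S.U.\ of $K_1$ with $q(K_1) = 1$. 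Feeding these into the Kubota--Lemmermeyer class number formula of \Cref{lemmermeyer} and using $\al A_{1}^{\prime} \ar = 4$ gives $\al A_2 \ar = \tfrac{q(K_2)}{2}\al A_1 \ar$, so the whole statement reduces to proving $q(K_2) = 2$, equivalently $\al A_2 \ar = \al A_1 \ar$, after which \Cref{fukuda stability} yields $X_{\infty} \cong A_1$.

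Next I would bound $q(K_2)$. From \Cref{inequality of intermediate fields} with $n = 1$ one gets $\al A_2 \ar \leq \tfrac12 \al A_1 \ar \al A_{1}^{\prime} \ar = 2\al A_1 \ar$, hence $q(K_2) \leq 4$, while total ramification of $K_{\infty}/K$ gives the matching lower bound $\al A_1 \ar \leq \al A_2 \ar$. To pin $q(K_2)$ down to $2$ I would run the same square-descent as in \Cref{doublesymbol-1part2}: fix fundamental systems $\{\eps, \epsp, \epst\}$, $\{v_1, v_2, \epst\}$ and $\{w_1, w_2, \epst\}$ of $K_1$, $K_{1}^{\prime}$ and $\Q_2$ with the norms to $\Q_1$ normalised as in Section $4$ (which first requires deciding whether $-1$ is a norm from $K_{1}^{\prime}/\Q_1$), write $u^2 = \eps^a \epsp^b \epst^c v_1^d v_2^e w_1^f w_2^g$ for $u \in E(K_2)$, and apply $N_{K_2/K_1}$, $N_{K_2/\Q_2}$ and $N_{K_2/K_{1}^{\prime}}$ to eliminate most exponents. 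Because $N(\epsp) = -1$ here, the resulting congruences differ from those in \Cref{doublesymbol-1part2}: instead of isolating $\sqrt{\epsp\epst}$ alone, they force every surviving candidate carrying $\epsp$ to carry the auxiliary units $v_2$ and $w_2$ as well.

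Finally I would try to eliminate the remaining candidates with the automorphism $\theta \in \mathrm{Gal}(K_2/\Q)$ determined by $\theta(\sqrt{2+\sqrt 2}) = \sqrt{2-\sqrt 2}$ and $\theta(\sqrt D) = \sqrt D$, as in \Cref{doublesymbol-1part2}. For a candidate built only from $\eps$ and $\epst$, say $u = \sqrt{\eps\epst}$, one finds $(u\,\theta(u))^2 = \eps^2\,N_{\Q_1/\Q}(\epst) = -\eps^2 < 0$, so it cannot lie in $K_2$. The main obstacle is that, because $N(\epsp) = N(\epst) = -1$, the sign computation for the surviving $\epsp\epst$-part degenerates: one gets $N_{\kbar/\Q}(\epsp)\,N_{\Q_1/\Q}(\epst) = (-1)(-1) = 1 \geq 0$, producing no contradiction, and the presence of the non-explicit units $v_2 \in E(K_{1}^{\prime})$, $w_2 \in E(\Q_2)$ means $\theta$ cannot even be evaluated cleanly. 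This is precisely the difficulty flagged in the remark following \Cref{doublesymbol-1part2}: deciding whether such an element is a square in $K_2$ cannot be settled by norms alone and seems to require an explicit description of the fundamental units of $K_{1}^{\prime}$, for instance through the circular units exploited in \cite{kumakawa} or the Iwasawa-theoretic estimates of \cite{fukudasimilarcase}. Moreover the capitulation shortcut of \Cref{capitulationpq} is unavailable, since $q \equiv 9 \bmod 16$ makes the primes above $q$ inert in $\Q_2/\Q_1$, so the split-prime capitulation that settled the case $q \equiv 1 \bmod 16$ does not occur. This last step is where I expect the real difficulty to lie, and it is the reason the statement is posed only as a conjecture.
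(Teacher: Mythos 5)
This statement is posed in the paper as a conjecture: the authors explicitly say their methods could not prove it, so there is no paper proof to compare against, and your proposal, rightly, does not claim to complete one either. Your partial reduction is sound and matches what the paper's own machinery gives: \Cref{scholz} yields $N(\eps)=1$ and $\al A_{0}\ar=2$; the F.S.U.\ argument in part i of \Cref{lastcase1mod16} indeed uses only the norm conditions and not $q\equiv 1\pmod{16}$, so $N(\epsp)=-1$ forces $\{\eps,\epsp,\epst\}$ to be a F.S.U.\ of $K_{1}$ with $q(K_{1})=1$; \Cref{fieldK1hat} gives $\al A_{1}^{\prime}\ar=4$; and \Cref{lemmermeyer}, \Cref{inequality of intermediate fields} and \Cref{fukuda stability} reduce everything to showing $q(K_{2})=2$. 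Your diagnosis of why that last step resists the method of \Cref{doublesymbol-1part2} is exactly the paper's: with $N(\epsp)=N(\epst)=-1$ the $\theta$-argument produces $(u\,\theta(u))^{2}=(-1)(-1)=1$ instead of a negative number, so no contradiction arises, and the capitulation route of \Cref{capitulationpq} is unavailable because for $q\equiv 9\pmod{16}$ the primes above $q$ in $\Q_{1}$ are inert in $\Q_{2}$ --- both points the paper itself flags, in the remark following \Cref{capitulationpq} and in the closing discussion of Section 5, where a finer knowledge of the units of $K_{2}$ is named as the missing ingredient. So there is no gap relative to the paper: you have correctly identified both the viable reduction and the genuine open obstruction, which is precisely why the statement is a conjecture rather than a theorem.
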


We found no evidence to reject this conjecture but the methods we have at our disposal were not enough to prove the result. A better understanding of the units of $K_{2}$, using the units of the intermediate fields and the quadratic symbols of $\Z[\sqrt{2},i]$, might lead to a positive result in the future.

\subsection*{Acknowledgments} The results presented in this document are from the author's Ph.D. thesis dissertation at University of Maryland, and this article is part of project 821-C4-257 at Universidad de Costa Rica. First, I would like to thank my advisor, Professor Niranjan Ramachandran, for his encouragement and constant help during my years at University of Maryland. I would also like to thank Professor Lawrence C. Washington for always keeping his door open to me and for all the ideas and clarifications he gave me on the elaboration of my thesis, this would not have been possible without their help and for that, I will always be grateful.  Universidad de Costa Rica funded this research, I would like to thank the Department of Mathematics, the "Vicerrector\'ia de Investigaci\'on" for funding the project, and the center CIMPA (Centro de investigaci\'on de Matem\'atica Pura y Aplicada).

\nocite{azizi2000}

\end{document}